\let\oldaddcontentsline\addcontentsline
\newcommand{\starttocentries}{\let\addcontentsline\oldaddcontentsline}
\newtheorem{theorem}{Theorem}[section]
\newtheorem{lemma}[theorem]{Lemma}
\newtheorem{prop}[theorem]{Proposition}
\newtheorem{cor}[theorem]{Corollary}
\newtheorem{lem}[theorem]{Lemma}
\newtheorem{question}[theorem]{Question}
\newtheorem*{cor*}{Corollary}
\newtheorem*{conjecture*}{Conjecture}
\newtheorem*{thm*}{Theorem}
\newtheorem*{lem*}{Lemma}
\newtheorem*{prop*}{Proposition}
\theoremstyle{definition}
\newtheorem{definition}[theorem]{Definition}
\newtheorem{example}[theorem]{Example}
\newtheorem*{defn*}{Definition}
\theoremstyle{remark}
\newtheorem{remark}[theorem]{Remark}
\newcommand{\notimplies}{%
  \mathrel{{\ooalign{\hidewidth$\not\phantom{=}$\hidewidth\cr$\implies$}}}}
\newcommand{\E}{\mathbb{E}}
\def\l@subsection{\@tocline{2}{0pt}{1pc}{5pc}{}} \def\l@subsection{\@tocline{2}{0pt}{2pc}{6pc}{}} \makeatother
\title{Invariant $C^*$-subalgebras of the reduced group $C^*$-algebra}
\author[Amrutam]{Tattwamasi Amrutam}
\address{Institute of Mathematics of the Polish Academy of Sciences, Ul. S'niadeckich 8\\ 00-656 Warszawa, Poland}
\email{tattwamasiamrutam@gmail.com}
\author[Jiang]{Yongle Jiang*}
\address{School of Mathematical Sciences, Dalian University of Technology, Dalian, 116024, China}
\email{yonglejiang@dlut.edu.cn}
\date{\today}
\subjclass[2010]{Primary 37A55, 37B05; Secondary 46L55, 22D25}
\keywords{Reduced $C^*$-Subalgebra, invariant subalgebras, averaging principle, hyperbolic groups}
\thanks{*-Corresponding author}
\begin{document}
\begin{abstract}
Let $\Gamma$ be a countable discrete group. We say that $\Gamma$ has $C^*$-invariant subalgebra rigidity (ISR) property if every $\Gamma$-invariant $C^*$-subalgebra $\mathcal{A}\le C_r^*(\Gamma)$ is of the form $C_r^*(N)$ for some normal subgroup $N\triangleleft\Gamma$. We show that all torsion-free, non-amenable (acylindrically) hyperbolic groups, and a finite direct product of such groups have this property. We also prove that an infinite group $\Gamma$ has the C$^*$-ISR property only if $\Gamma$ is simple amenable or $C^*$-simple.  \end{abstract}
\maketitle
\tableofcontents
\section{Introduction}
Let $\Gamma$ be a discrete group. Many profound results have appeared in the past that determine the structure of an intermediate subalgebra associated with inclusions (see, e.g.,~\cite{izumi1998galois,cameron2016intermediate, cameron2019galois, Suz,rordam2023irreducible} among others). These results establish a \say{rigidity phenomenon} under certain assumptions. In this paper, we are interested in \textbf{unital} C$^*$-subalgebras $\mathcal{A}$ associated with the inclusion $\mathbb{C}\subset C_r^*(\Gamma)$; without the unital assumption, $C^*$-simplicity is clearly necessary for the rigidity properties we discuss.

It is interesting to ask which group properties are reflected at the group von Neumann algebra or reduced group $C^*$-algebra level. Recall that a group $\Gamma$ is said to have the Invariant Subalgebra Rigidity (ISR) property in the von Neumann setting if every $\Gamma$-invariant von Neumann subalgebra of $L(\Gamma)$ is of the form $L(N)$ for some normal subgroup $N \triangleleft \Gamma$~\cite{amrutam2023invariant}. Analogously, a normal subgroup $N\triangleleft\Gamma$ gives rise to a $\Gamma$-invariant $C^*$-subalgebra $C_r^*(N)\le C_r^*(\Gamma)$.
When we say invariance, we mean invariance under the conjugation action of the unitary elements coming from the group $\Gamma$. We say that $\Gamma$ is \say{rigid} in some sense if $\{C_r^*(N): N\triangleleft\Gamma\}$ exhaust the list of all invariant $C^*$-subalgebras of $C_r^*(\Gamma)$. Note that all invariant C$^*$-subalgebras considered in this paper are assumed to be unital.
\begin{definition}[$C^*$-ISR property]
Let $\Gamma$ be a discrete group. We say that $\Gamma$ has $C^*$-invariant subalgebra rigidity property ($C^*$-ISR property) if every invariant $C^*$-subalgebra $\mathcal{A}\le C_r^*(\Gamma)$ is of the form $C_r^*(N)$ for some normal subgroup $N\triangleleft\Gamma$.
\end{definition}
In this paper, we establish a new rigidity phenomenon by showing that there is a significant class of groups $\Gamma$ for which the only invariant $C^*$- subalgebras inside $C_r^*(\Gamma)$ come from the normal subgroups; in particular, they satisfy the $C^*$-ISR property.
\begin{theorem}
Let $\Gamma$ be a torsion-free non-amenable hyperbolic group. Then, $\Gamma$ has the $C^*$-ISR property.
\end{theorem}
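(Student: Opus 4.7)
The plan is to fix an arbitrary invariant unital $C^*$-subalgebra $\mathcal{A}\subseteq C_r^*(\Gamma)$ and produce a normal subgroup $N\triangleleft\Gamma$ with $\mathcal{A}=C_r^*(N)$. The only natural candidate is
$$N:=\{g\in\Gamma\,:\,u_g\in\mathcal{A}\}.$$
Invariance of $\mathcal{A}$ under conjugation by $\{u_h\}_{h\in\Gamma}$ forces $N$ to be normal in $\Gamma$, and the inclusion $C_r^*(N)\subseteq\mathcal{A}$ is tautological. The content of the theorem lies entirely in the reverse inclusion $\mathcal{A}\subseteq C_r^*(N)$.

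I would attack this via Fourier analysis with respect to the canonical trace $\tau$ on $C_r^*(\Gamma)$, which is faithful because torsion-free non-amenable hyperbolic groups are $C^*$-simple. Setting $\hat{a}(g)=\tau(au_g^*)$, every $a\in C_r^*(\Gamma)$ is uniquely determined by its Fourier coefficients, and any element of $C_r^*(\Gamma)$ whose Fourier support is contained in $N$ actually lies in $C_r^*(N)$. It therefore suffices to prove the single-coefficient extraction statement: if $a\in\mathcal{A}$ and $g_0\in\Gamma$ satisfies $\hat{a}(g_0)\neq 0$, then $u_{g_0}\in\mathcal{A}$. Equivalently, I want to construct a sequence of convex combinations of $\Gamma$-conjugates of $a$ (which necessarily remain in $\mathcal{A}$ by invariance) converging in $C^*$-norm to a nonzero scalar multiple of $u_{g_0}$.

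The extraction step would proceed by an averaging argument adapted to hyperbolic geometry. For $g_0\neq e$ in a torsion-free non-amenable hyperbolic group, the centralizer $C_\Gamma(g_0)$ coincides with the unique maximal (cyclic, hence amenable) elementary subgroup $E(g_0)$ containing $g_0$. A ping-pong construction on $\partial\Gamma$ produces a nonabelian free subgroup $F\leq\Gamma$ of loxodromic generators whose attracting and repelling fixed points are disjoint from those of $g_0$. Conjugating $a$ by long words of $F$ keeps the $u_{g_0}$-block essentially controlled while pushing every other $u_g$-block (with $g\notin E(g_0)$) to group elements of arbitrarily large word-length. Powers-type averaging within $F$, combined with the Haagerup--Jolissaint rapid decay property of hyperbolic groups, converts the $\ell^2$-smallness of the averaged tail into $C^*$-norm smallness, yielding in the limit $\hat{a}(g_0)u_{g_0}\in\mathcal{A}$ as required.

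The principal obstacle is precisely this bridging between $\ell^2$- and $C^*$-smallness of the conjugation-averaged tail: classical Dixmier/Powers averaging in a $C^*$-simple group isolates only the identity Fourier coefficient $\tau(a)\cdot 1$, and extracting a non-identity coefficient demands fine operator-norm control of the averaged part supported off $E(g_0)$, which is exactly where rapid decay (property (RD)) of hyperbolic groups becomes essential. An alternative organisational scheme is to first establish an analogous rigidity at the von Neumann algebra level, identifying the SOT-closure of $\mathcal{A}$ inside $L(\Gamma)$ with $L(N')$ for some normal $N'\triangleleft\Gamma$, and then perform a separate norm-upgrade to identify $N'=N$; in either route, it is hyperbolicity (through rapid decay and boundary dynamics) that carries the argument through, and it is also there that the extension from torsion-free hyperbolic to more general torsion-free acylindrically hyperbolic groups, advertised in the abstract, will demand the most care.
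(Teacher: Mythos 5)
Your overall skeleton agrees with the paper's: define $N=\{g:\lambda(g)\in\mathcal{A}\}$, note normality and $C_r^*(N)\subseteq\mathcal{A}$, reduce to extracting a single Fourier coefficient, and observe that support containment in $N$ forces $\mathcal{A}\subseteq C_r^*(N)$ (the paper proves this last reduction by a direct Hilbert-space argument, needing neither AP nor rapid decay). The gap is in the extraction mechanism itself, and it is a real one, not a matter of detail.

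First, conjugation does not treat $u_{g_0}$ specially: if $w$ is a long word in a free subgroup $F$ whose fixed points on $\partial\Gamma$ are disjoint from those of $g_0$, then $wg_0w^{-1}$ is just as far from $g_0$ as $wgw^{-1}$ is from $g$ for any other $g$ in the support. Averaging conjugates of $a$ over such $w$ therefore does not "keep the $u_{g_0}$-block controlled"; it smears the $g_0$-coefficient over many distinct conjugates and, by the very Powers-type estimate you invoke, sends that part to something of small norm as well. The only way an averaging of conjugates can fix $u_{g_0}$ exactly is to average over elements commuting with $g_0$. This is what the paper does: write $g_0=t^n$ with $t$ primitive, so that $\operatorname{Stab}_\Gamma\{x_t^+,x_t^-\}=\langle t\rangle$, and average over powers of $t$. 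North--south dynamics then kills every coefficient off $\langle t\rangle$ while fixing $C_r^*(\langle t\rangle)$ pointwise, giving $\mathbb{E}_t(a)\in\mathcal{A}$ (and no rapid-decay input is needed; the norm control comes from a Hahn--Banach separation argument on states of $C(\partial\Gamma)\rtimes_r\Gamma$ as in Haagerup's proof of Powers averaging).

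Second, even after this corrected averaging you have only reached $\mathbb{E}_t(a)=\sum_k c_{t^k}\lambda(t^k)\in\mathcal{A}$, which is an element of $C_r^*(\langle t\rangle)$, not the single monomial $c_{g_0}\lambda(g_0)$. No further averaging over the centralizer can separate the powers of $t$ from one another. The paper needs a second, genuinely algebraic stage: choose $h$ with $\langle t,h\rangle\cong\mathbb{F}_2$, multiply $\mathbb{E}_t(a)$ by $\mathbb{E}_{hth^{-1}}(\lambda(h)a\lambda(h)^*)$, and apply the conditional expectation onto the cyclic group generated by the primitive root of $t^nht^nh^{-1}$; a commutation lemma in $\mathbb{F}_2$ shows that exactly one cross-term survives, yielding $\lambda(t^nht^nh^{-1})\in\mathcal{A}$, after which a short sequence of group-theoretic manipulations (replacing $h$ by $h^2$, etc.) recovers $\lambda(g_0)$ itself. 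This entire stage is absent from your proposal, and without it the argument does not close. (A minor point: faithfulness of the canonical trace on $C_r^*(\Gamma)$ holds for every discrete group and has nothing to do with $C^*$-simplicity.)
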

Our methods also work for all torsion-free acylindrically hyperbolic groups (see Subsection~\ref{sub:torsionacylin}). We also show that a finite product of such groups satisfies the $C^*$-ISR property.
\begin{theorem}
Let $\Gamma=\Gamma_1\times\Gamma_2\times\cdots\times\Gamma_n$, where each $\Gamma_i$ is a torsion-free non-amenable hyperbolic group. Then, $\Gamma$ has the $C^*$-ISR property.
\end{theorem}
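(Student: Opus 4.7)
The plan is to proceed by induction on the number of factors $n$; the base case $n = 1$ is the preceding theorem. For the inductive step, write $\Gamma = \Gamma_1 \times \Lambda$ with $\Lambda = \Gamma_2 \times \cdots \times \Gamma_n$, which satisfies the $C^*$-ISR property by induction. Identify $C_r^*(\Gamma) = C_r^*(\Gamma_1) \otimes_{\min} C_r^*(\Lambda)$, write $\tau = \tau_1 \otimes \tau_\Lambda$ for the canonical trace, and let $E_\Lambda := \tau_1 \otimes \mathrm{id}$ and $E_1 := \mathrm{id} \otimes \tau_\Lambda$ be the corresponding trace-preserving conditional expectations. Let $\mathcal{A} \le C_r^*(\Gamma)$ be any $\Gamma$-invariant $C^*$-subalgebra, and set $N := \{g \in \Gamma : u_g \in \mathcal{A}\}$; this is a normal subgroup of $\Gamma$ with $C_r^*(N) \subseteq \mathcal{A}$ automatically, so the task is to establish the reverse inclusion $\mathcal{A} \subseteq C_r^*(N)$.

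The first step is to apply Powers/Dixmier averaging on each factor to show that $\mathcal{A}$ is stable under both conditional expectations. Since $\Gamma_1$ is torsion-free non-amenable hyperbolic (hence enjoys the uniform Dixmier property), for every $x \in \mathcal{A}$ and every $\epsilon > 0$ there exist $g_1, \ldots, g_k \in \Gamma_1$ with $\|\frac{1}{k}\sum_i (u_{g_i}\otimes 1)\,x\,(u_{g_i}\otimes 1)^{*} - E_\Lambda(x)\| < \epsilon$; because $\mathcal{A}$ is norm-closed and conjugation-invariant, $E_\Lambda(x) \in \mathcal{A}$, and the symmetric argument for $\Lambda$ (whose finite-product structure preserves the Dixmier property) gives $E_1(x) \in \mathcal{A}$. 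Consequently $E_\Lambda(\mathcal{A}) = \mathcal{A} \cap (C_r^*(\Gamma_1) \otimes 1)$ is a $\Gamma_1$-invariant $C^*$-subalgebra of $C_r^*(\Gamma_1)$, so the base case identifies it as $C_r^*(N_1)$ with $N_1 = N \cap \Gamma_1$, and the inductive hypothesis identifies $E_1(\mathcal{A}) = 1 \otimes C_r^*(N_\Lambda)$ with $N_\Lambda = N \cap \Lambda$. In particular $C_r^*(N_1 \times N_\Lambda) \subseteq C_r^*(N) \subseteq \mathcal{A}$.

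To upgrade this to $\mathcal{A} \subseteq C_r^*(N)$, I would expand a generic $x \in \mathcal{A}$ as its formal $L^2$-Fourier series $x = \sum_{(g, h) \in \Gamma} \widehat{x}(g, h)\, u_g \otimes u_h$ and argue coefficient-wise. For each $(h_1, k) \in N$ the element $(u_{h_1}^{*} \otimes u_k^{*})\,x$ lies in $\mathcal{A}$, so applying $E_\Lambda$ yields $\sum_h \widehat{x}(h_1, k h)\,u_h \in C_r^*(N_\Lambda)$, forcing $\widehat{x}(h_1, g) = 0$ whenever $g \notin k N_\Lambda$. It remains to show the complementary vanishing: if $h_1 \in \Gamma_1$ does not appear as a first coordinate of any element of $N$, then $\widehat{x}(h_1, \cdot) \equiv 0$. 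For this, form the slice $y_{h_1}(x) := (\omega_{h_1} \otimes \mathrm{id})(x) \in L^2(L(\Lambda))$ with $\omega_{h_1}(\cdot) = \tau_1(u_{h_1}^{*}\,\cdot\,)$, and show that after a secondary averaging over $\Gamma_1$-conjugation of the outer factor, the collection $\{y_{h_1}(x) : x \in \mathcal{A}\}$ promotes to a $\Lambda$-invariant $C^*$-subalgebra of $C_r^*(\Lambda)$; by the inductive hypothesis it equals $C_r^*(M_{h_1})$ for some $M_{h_1} \triangleleft \Lambda$, and a compatibility check against the definition of $N$ forces $M_{h_1} = \{e\}$.

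The main obstacle is this last slicing-and-compatibility step. Showing that the ``row'' spaces $\{y_{h_1}(x) : x \in \mathcal{A}\}$ genuinely form $C^*$-subalgebras rather than merely closed operator subspaces, and that the normal subgroups $\{M_{h_1}\}$ they produce assemble coherently with $N_1, N_\Lambda$ into the single normal subgroup $N$ without generating spurious off-diagonal Fourier modes, is the delicate technical part. This coherence is likely where the full boundary-dynamical input underlying the single-factor theorem must reappear, rather than only the Powers/Dixmier averaging invoked in the first step.
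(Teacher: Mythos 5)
Your first step --- using Powers averaging in each tensor factor to show that the slice maps $\tau_1\otimes\mathrm{id}$ and $\mathrm{id}\otimes\tau_\Lambda$ preserve $\mathcal{A}$, and then identifying $\mathcal{A}\cap(C_r^*(\Gamma_1)\otimes 1)$ and $\mathcal{A}\cap(1\otimes C_r^*(\Lambda))$ via the single-factor case and induction --- is correct and is exactly one of the two ingredients in the paper's proof (its Theorem on invariance under $\mathbb{E}_{\tau_0^i}$). Your coefficient computation for rows $h_1$ that do occur as first coordinates of elements of $N$ is also fine. But the case you defer to the end, namely showing that $\widehat{x}(g,h)\neq 0$ with $g\neq e$ and $h\neq e$ forces $\lambda(g)\otimes\lambda(h)\in\mathcal{A}$, is not a technical remainder: it is the entire content of the theorem, and the route you sketch for it does not work. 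First, ``secondary averaging over $\Gamma_1$-conjugation of the outer factor'' cannot isolate the row $y_{h_1}(x)$ for $h_1\neq e$: averaging $u_s x u_s^*$ over a Powers sequence in $\Gamma_1$ converges to $(\tau_1\otimes\mathrm{id})(x)$, which annihilates every row with $h_1\neq e$. What is needed instead is a \emph{selective} averaging that projects onto $C_r^*(\langle t_1\rangle)\otimes C_r^*(\langle t_2\rangle)$ for primitive elements $t_i$ with $g\in\langle t_1\rangle$, $h\in\langle t_2\rangle$; this requires averaging over powers of a single primitive element whose fixed-point pair on the Gromov boundary has trivial setwise stabilizer outside $\langle t_i\rangle$, and moreover requires the averaging to work \emph{simultaneously} for finitely many elements so that it passes to elementary tensors (the paper's Theorem on simultaneous averaging and its Proposition on $\mathbb{E}_{s_1}\otimes\mathbb{E}_{s_2}(\mathcal{A})\subset\mathcal{A}$). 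Second, your row spaces $\{y_{h_1}(x)\}$ are operator subspaces, not subalgebras, so the inductive hypothesis (a statement about invariant $C^*$-subalgebras) cannot be applied to them; you acknowledge this but offer no substitute.

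Even granting the selective averaging, one still has to manufacture the unitary $\lambda(g)\otimes\lambda(h)$ inside $\mathcal{A}$ from the knowledge that $\mathbb{E}_{t_1}\otimes\mathbb{E}_{t_2}(a)=\sum_{k,l}c_{t_1^k,t_2^l}\,t_1^k\otimes t_2^l\in\mathcal{A}$ with $c_{t_1^n,t_2^m}\neq 0$. The paper does this by a ping-pong argument run in both coordinates at once: conjugate by $(h_1,h_2)$ with $t_i$ free from $h_i$, multiply the two resulting elements of $\mathcal{A}$, and use the free-group commutation lemma to see that a further conditional expectation onto a suitable primitive cyclic subgroup isolates the single term $c_{t_1^n,t_2^m}^2\,t_1^nh_1t_1^nh_1^{-1}\otimes t_2^mh_2t_2^mh_2^{-1}$; a second round with $h_i^2$ and $t_i^{2n}$ then recovers $t_1^n\otimes t_2^m$ up to conjugacy. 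None of this ``boundary-dynamical input'' is supplied in your proposal --- you correctly suspect it must reappear, but without it the inductive step is not a proof. Finally, note that the reduction of the whole problem to supports (concluding $\mathcal{A}\subseteq C_r^*(N)$ once every support point of every $a\in\mathcal{A}$ lies in $N$) needs the paper's Proposition on supports of subalgebras, since elements of $C_r^*(\Gamma)$ are not norm-convergent Fourier series; your formal expansion $x=\sum\widehat{x}(g,h)u_g\otimes u_h$ should be treated with the same care.
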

An immediate consequence of the $C^*$-ISR property is that for $C^*$-simple groups with this property, every invariant $C^*$-subalgebra is simple (see~\cite[Theorem~1.4]{breuillard2017c}). While it was shown in \cite[Theorem~1.3]{amrutam2020simplicity} that every invariant $C^*$-subalgebra $\mathcal{A}\le C_r^*(\Gamma)$ for a $C^*$-simple group is $\Gamma$-simple (i.e., $\mathcal{A}$ does not have any non-trivial $\Gamma$-invariant ideals), $C^*$-ISR rigidity allows us to conclude that such invariant ones are also simple.
We also obtain a dichotomy for infinite groups satisfying the $C^*$-ISR property in that they are either $C^*$-simple or simple amenable.
\begin{theorem}
\label{thm:conseISR}
Let $\Gamma$ be a countable infinite group that satisfies the $C^*$-ISR property. Then, either $\Gamma$ is $C^*$-simple or is a simple amenable group.   \end{theorem}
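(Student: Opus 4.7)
The plan is to proceed by contrapositive and show that if $\Gamma$ is not $C^*$-simple and has the $C^*$-ISR property, then $\Gamma$ must be simple amenable. The central observation is that for any proper nontrivial ideal $J \triangleleft C_r^*(\Gamma)$, the sum $J + \mathbb{C} \cdot 1$ is automatically a $\Gamma$-invariant unital $C^*$-subalgebra of $C_r^*(\Gamma)$: it is norm-closed because $\mathbb{C} \cdot 1$ is finite-dimensional, it is a $*$-subalgebra since $J$ is a self-adjoint ideal, and it is $\Gamma$-invariant since both ideals and scalars are. By $C^*$-ISR we may write $J + \mathbb{C} = C_r^*(N)$ for some normal subgroup $N \triangleleft \Gamma$. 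The case $N = \{e\}$ is ruled out since it would force $J \subseteq \mathbb{C}$ and hence $J = 0$. The case of $N$ proper and nontrivial is excluded via a Fourier-support argument: for any nonzero $j \in J$ one has $j \in C_r^*(N)$, so for any $g \in \Gamma \setminus N$ the element $j \delta_g$ lies in $J \subseteq C_r^*(N)$, yet its Fourier support is contained in the coset $Ng$, which is disjoint from $N$; hence $j\delta_g = 0$, and so $j = 0$, a contradiction. Therefore $N = \Gamma$, which means $J$ has codimension $1$ and equals $\ker \chi$ for some character $\chi \colon C_r^*(\Gamma) \to \mathbb{C}$.

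Next, I will argue that this character is unique. If $\chi_1 \neq \chi_2$ were two distinct characters, their kernels would be distinct codimension-$1$ ideals, and the intersection $\ker \chi_1 \cap \ker \chi_2$ would be a proper ideal of codimension $2$ in $C_r^*(\Gamma)$, necessarily nontrivial because $\Gamma$ is infinite. This contradicts the previous paragraph. Now, any character of $\Gamma$ that extends to $C_r^*(\Gamma)$ is weakly contained in the left regular representation $\lambda$; combining $\chi \prec \lambda$ with Fell's absorption $\chi \otimes \lambda \cong \lambda$ and the identity $\chi \otimes \overline{\chi} \cong 1$, one concludes that the trivial representation is weakly contained in $\lambda$. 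By Hulanicki's theorem this forces $\Gamma$ to be amenable.

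For amenable $\Gamma$ we have $C_r^*(\Gamma) = C^*(\Gamma)$, so every character of the group $\Gamma$ extends to a character of $C_r^*(\Gamma)$; combined with the uniqueness above and the fact that the trivial character always exists, $\Gamma$ admits exactly one character, which must be trivial, and hence $\Gamma$ is perfect. Finally, suppose toward contradiction that $\Gamma$ has a proper nontrivial normal subgroup $N$. The surjective quotient map $C^*(\Gamma) \twoheadrightarrow C^*(\Gamma/N)$ has kernel $J_N$, which is a proper nontrivial ideal of $C_r^*(\Gamma)$; by the first paragraph it has codimension $1$, so $C^*(\Gamma/N) \cong \mathbb{C}$, forcing $\Gamma/N$ to be trivial, a contradiction. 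Hence $\Gamma$ is simple, completing the proof. The main obstacle is the Fourier-support argument in the first paragraph ruling out the possibility of $N$ being proper and nontrivial, since that is precisely where $C^*$-ISR is leveraged to constrain the ideal structure of $C_r^*(\Gamma)$; the remainder is a fairly direct chain of consequences through uniqueness, Fell's absorption, and the standard correspondence between normal subgroups and quotients in the amenable case.
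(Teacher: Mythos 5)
Your proposal is correct, and the overall skeleton matches the paper's: the C$^*$-ISR property forces every nonzero proper closed two-sided ideal of $C_r^*(\Gamma)$ to have codimension one (your step showing $J+\mathbb{C}=C_r^*(N)$ forces $N=\Gamma$ is the paper's first lemma; you rule out proper $N$ by a Fourier-support plus faithful-trace argument, whereas the paper writes $\lambda(h)=a_h+c_h$ and shows directly that $gh\in N$ for every $g$ --- same idea, slightly different packaging), and your treatment of the amenable non-simple case via the kernel of $C^*(\Gamma)\twoheadrightarrow C^*(\Gamma/N)$ is identical to the paper's. The genuine divergence is in the non-amenable case. The paper extends the resulting character of $C_r^*(\Gamma)$ to a $\Gamma$-equivariant u.c.p.\ map on $C(\partial_F\Gamma)\rtimes_r\Gamma$ using $\Gamma$-injectivity and rigidity of the Furstenberg boundary, deduces that $\Gamma$ acts trivially on $\partial_F\Gamma$, and then invokes Furman's theorem identifying the kernel of that action with the amenable radical. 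You instead note that a character of $C_r^*(\Gamma)$ restricts to a unitary character $\chi$ of $\Gamma$ weakly contained in $\lambda$, so $1_\Gamma=\chi\otimes\overline{\chi}\prec\lambda\otimes\overline{\chi}\cong\lambda$ by Fell absorption, and conclude amenability from Hulanicki's theorem. This is the classical argument that $C_r^*(\Gamma)$ admits a one-dimensional representation if and only if $\Gamma$ is amenable; it is more elementary and avoids boundary theory entirely, while the paper's route stays within the boundary-dynamical toolkit it uses elsewhere. Two minor remarks: your uniqueness-of-character step and the resulting perfectness of $\Gamma$ are correct but redundant, since the final simplicity argument already yields perfectness for an infinite group and amenability only requires the existence of one character; and when you write $j\delta_g$ you of course mean $j\lambda(g)$, with the conclusion $j=0$ following from faithfulness of the canonical trace.
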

Much work has been done to study the invariant von Neumann subalgebras of the group von Neumann algebra. Motivated by the works of \cite{alekseev2019rigidity} and \cite{chifan2020rigidity}, Kalantar-Panagoupolos~\cite{kalantar2022invariant} showed that every $\Gamma$-invariant sub-algebra of $L(\Gamma)$ is of the form $L(N)$ for a normal subgroup $N\triangleleft\Gamma$, where $\Gamma$ is an irreducible lattice in the product of higher rank simple Lie groups. This motivated the authors to introduce \say{ISR-property} in \cite{amrutam2023invariant}  in the context of the group von Neumann algebra $L(\Gamma)$ for a general countable discrete group, and we established this property for all
 torsion-free hyperbolic groups $\Gamma$, and finite direct product of them. This was partially generalized in \cite{chifan2022invariant} for a broader class of groups, including non-amenable acylindrically hyperbolic groups with trivial amenable radicals.

 More recently, the second named author, along with Zhou~\cite{jiang2024example}, has exhibited an infinite amenable group with the ISR-property (also see \cite{dudko2024character}). The ISR property, however, does not imply the C$^*$-ISR property in general (see Example \ref{example: ISR does not imply C-ISR}). Irrespective, if an infinite amenable group satisfied the $C^*$-ISR property, then the ideal structure becomes much more transparent.
\begin{theorem}
\label{thm:consequenceofISR}
Let $\Gamma$ be a countable infinite amenable group with the C$^*$-ISR property. Then, the only non-trivial closed two-sided ideal in $C^*_r(\Gamma)$ is $J_{\text{Aug}}$, where $J_{\text{Aug}}$ is the augmentation ideal generated by $\{\lambda(g)-\lambda(e):~g\in\Gamma\}$. Moreover, $\mathbb{C}\Gamma$ has a unique C$^*$-completion.
\end{theorem}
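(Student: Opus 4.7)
The plan is to reduce the theorem to Theorem~\ref{thm:conseISR} combined with a single maneuver: given a proper closed two-sided ideal $J\subsetneq C^*_r(\Gamma)$, form $\cA:=J+\mathbb{C}\cdot 1$. This is a unital, norm-closed (the sum of a closed subspace and a one-dimensional subspace is closed), self-adjoint, multiplicatively closed subspace of $C^*_r(\Gamma)$, hence a unital $C^*$-subalgebra, and it is $\Gamma$-invariant because $J$ is two-sided. First, I would invoke Theorem~\ref{thm:conseISR}: since $\Gamma$ is infinite and amenable it cannot be $C^*$-simple (already the augmentation character obstructs simplicity of $C^*_r(\Gamma)$), so $\Gamma$ must be simple amenable. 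In particular $\Gamma$ is non-abelian, because the only simple abelian groups are cyclic of prime order.

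Next, for any non-zero proper closed two-sided ideal $J$, the $C^*$-ISR property applied to $\cA$ yields $\cA=C^*_r(N)$ for some $N\triangleleft\Gamma$. Simplicity of $\Gamma$ leaves two cases: $N=\{e\}$, which would force $J=0$ and is excluded; or $N=\Gamma$, which means $J$ has codimension one in $C^*_r(\Gamma)$ and hence equals $\ker\chi$ for some $*$-character $\chi\colon C^*_r(\Gamma)\to\mathbb{C}$. By amenability $C^*_r(\Gamma)=C^*(\Gamma)$, and characters of $C^*(\Gamma)$ are in bijection with one-dimensional unitary representations of $\Gamma$, that is, group homomorphisms $\Gamma\to\mathbb{T}$. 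The kernel of any such homomorphism is a normal subgroup, hence is $\{e\}$ or $\Gamma$; the former would embed $\Gamma$ into $\mathbb{T}$ and contradict non-abelianness. Therefore $\chi$ is the augmentation character and $J=J_{\text{Aug}}$. (Note $J_{\text{Aug}}$ itself is non-trivial since $\Gamma$ is non-trivial.)

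For the ``moreover'' clause, every $C^*$-completion of $\mathbb{C}\Gamma$ arises as a quotient $C^*(\Gamma)/I=C^*_r(\Gamma)/I$ (again by amenability) for some closed two-sided ideal $I$ with $I\cap\mathbb{C}\Gamma=\{0\}$. By the classification just established, $I\in\{0,J_{\text{Aug}},C^*_r(\Gamma)\}$. The ideal $C^*_r(\Gamma)$ yields the zero algebra and is excluded, while $J_{\text{Aug}}$ is excluded because $\lambda(g)-\lambda(e)$ lies in $J_{\text{Aug}}\cap\mathbb{C}\Gamma$ and is non-zero for any $g\neq e$. Hence $I=0$ and the unique $C^*$-completion of $\mathbb{C}\Gamma$ is $C^*_r(\Gamma)$ itself. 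I do not anticipate a serious obstacle: the substance of the argument is absorbed into Theorem~\ref{thm:conseISR}, and the only genuinely new move is the ``adjoin the unit'' trick that converts a two-sided ideal into a unital $\Gamma$-invariant $C^*$-subalgebra to which $C^*$-ISR applies.
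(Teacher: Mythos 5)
Your proposal is correct and follows essentially the same route as the paper: the ``adjoin the unit'' trick applied to $J+\mathbb{C}$ is exactly the content of the paper's Lemma~\ref{lem: obstruction to C*-ISR by non-codimension one ideals}, the reduction to $\Gamma$ being simple amenable is Theorem~\ref{thm:conseISR} (which the paper likewise combines with that lemma), and the identification of the resulting codimension-one ideal with $\ker\epsilon$ via the triviality of characters on a perfect/non-abelian simple group matches the paper's abelianization argument, as does the treatment of unique $C^*$-completion via ideals meeting $\mathbb{C}\Gamma$. No gaps.
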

A key point in these arguments to show ISR-property in the setting of $L(\Gamma)$ is that $L(\Gamma)$ is a finite von Neumann algebra. As such, every von Neumann subalgebra $\mathcal{M}\le L(\Gamma)$ is the image of a conditional expectation. This is certainly not the case for subalgebras $\mathcal{A}$ inside $C_r^*(\Gamma)$ for the structure is not so rich (see, for example, \cite[Example~1.2]{pitts2017structure} or Section~6 in there for a far more general setup). This may suggest working with \say{pseudo conditional expectations} instead. This notion has been recently introduced and studied by Pitts~\cite{pitts2017structure, pitts2021structure}. Given an inclusion of $\Gamma$-$C^*$-algebra $\mathcal{B}\subset\mathcal{C}$, a pseudo conditional expectation is a $\Gamma$-equivariant map $\phi:\mathcal{C}\to I_{\Gamma}(\mathcal{B})$ such that $\phi|_{\mathcal{B}}=\text{id}$. Such a map always exists by the $\Gamma$-injectivity of the $\Gamma$-injective envelope $I_{\Gamma}(\mathcal{B})$.

However, the $\Gamma$-injective envelope is a highly intractable object in general. Moreover, the action $\Gamma\curvearrowright I_{\Gamma}(\mathcal{A})$ is not the conjugation action. Therefore, the techniques used to show ISR-property for the group von Neumann algebra do not have an immediate modification. In order to counter this drawback, we resort to using the averaging technique to \say{kill off} certain elements. This averaging technique is reminiscent of the Powers' averaging principle established in \cite{haagerup2016new} for $C^*$-simple groups. We refer the readers to Subsection \ref{subsection: general strategy} and, in particular, Proposition \ref{prop:genstrategy} for a general description of the strategy.

\subsection*{Acknowledgments}
Y. J. is partially supported by the National Natural Science Foundation of China (Grant No. 12471118). We thank Adam Skalski, Hanfeng Li, and Bartosz Kwa\'sniewski for taking the time to read through our draft and for suggesting numerous changes that improved the readability of the paper. We also thank Hanfeng Li for showing us Proposition~\ref{prop:supoprtofsubalg}. We are also grateful to the anonymous referee for useful comments that greatly improved the exposition.

\section{Preliminaries}

\subsection{Reduced Group \texorpdfstring{$C^*$}{C*}-algebra}
\label{sec:preliminary}
We briefly recall the construction of the group C$^*$-algebra. Let $\ell^2(\Gamma)$ be the space of square summable $\mathbb{C}$-valued functions on $\Gamma$. There is a natural action $\Gamma\curvearrowright \ell^2(\Gamma)$ by left translation:
\[\lambda_g\xi(h):=\xi(g^{-1}h), \xi \in \ell^2(\Gamma), g,h \in \Gamma\]
The reduced group $C^*$-algebra $C_r^*(\Gamma)$ is generated as a $C^*$-algebra inside $\mathbb{B}(\ell^2(\Gamma)$), by the left regular representation $\lambda$ of $\Gamma$. The reduced group $C^*$-algebra $C_r^*(\Gamma)$ comes equipped with a canonical trace $\tau_0:C_r^*(\Gamma)\to\mathbb{C}$ defined by
\[\tau_0\left(\lambda_g\right)= \begin{cases}
0 & \mbox{if $g\ne e$}\\
1 & \mbox{if $g=e$}\end{cases}\]
 For each $g\in\Gamma$, there is a canonical conditional expectation $\mathbb{E}_g:C_r^*(\Gamma)\to C_r^*(\langle g \rangle)$ defined by \[\mathbb{E}_g\left(\lambda(s)\right)=\begin{cases}
\lambda(s) & \mbox{if $s\in\langle g \rangle$}\\
0 & \mbox{otherwise}\end{cases}\]

\subsection{Approximation property for groups}
In general, an arbitrary element $a\in C_r^*(\Gamma)$ can not be written as an infinite sum $\sum_{g\in\Gamma} a(g)\lambda(g)$. But when the group satisfies the approximation property introduced by Haagerup, such a convergence (in the $\|\cdot\|$-topology) for a weighted sum is known to hold (see, for example, \cite{suzuki2017group,crann2022non}).
For a finitely supported function $\phi$ on $\Gamma$, let us denote  by $m_{\phi}: C_r^*(\Gamma) \to C_r^*(\Gamma)$ the completely bounded map defined by the formula $$m_{\phi}\left(\sum_{s}a_s\lambda_s\right)=\sum_sa_s\phi(s)\lambda_s.$$
A discrete group $\Gamma$ is said to have the approximation property (AP) if there exists a net $\left(\phi_{i}\right)_{i \in I}$ of finitely supported complex-valued functions on $\Gamma$ such that the map $m_{\phi_i} \otimes_{\text{min}} \text{id}_\mathcal{B}: C^*_r(\Gamma) \otimes_{\min} \mathcal{B} \to C^*_r(\Gamma) \otimes_{\min} \mathcal{B}$ converges to the identity map in the pointwise norm topology for any unital $C^*$-algebra $\mathcal{B}$. We refer the reader to \cite[Chapter~12]{BroOza08} for more details on these. We remark that property-AP is closed under extensions (see~\cite[Proposition~12.4.10]{BroOza08}). Hence, it is closed under taking products.
\subsection{Support of a Subalgebra} Now, given an invariant $C^*$-subalgebra $\mathcal{A}\le C_r^*(\Gamma)$, assume that there exists a normal subgroup $N\trianglelefteq\Gamma$ such that $\text{Supp}(a)\subset N$ for all $a\in\mathcal{A}$. Here, $\text{Supp}(a)=\{s\in\Gamma:\tau_0(a\lambda(s)^*)\ne 0\}$. If the group $\Gamma$ has property-AP, then in this case, it is easy to see that $\mathcal{A}\le C_r^*(N)$. Indeed, if $\Gamma$ has property-AP, using \cite[Proposition~3.4]{suzuki2017group}, every element $a\in\mathcal{A}$ can be written as
\[a=\lim_i\sum_{g\in\Gamma}\phi_i(g)\tau_0(a\lambda(g)^*)\lambda(g)\]
Here, $\phi_i$ is a finitely supported function on $\Gamma$. However, even if the group does not have property-AP, then it is still true that $\mathcal{A}\le C_r^*(N)$ as the following proposition shows. We thank Hanfeng Li for showing it to us.
\begin{prop}
\label{prop:supoprtofsubalg}Let $\mathcal{A}\le C_r^*(\Gamma)$ be a $C^*$-subalgebra. Assume that there exists a subgroup $N\le\Gamma$ such that $\text{Supp}(a)\subset N$ for all $a\in\mathcal{A}$. Then, $\mathcal{A}\le C_r^*(N)$.
\end{prop}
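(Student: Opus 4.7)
The plan is to construct a canonical conditional expectation $E_N : C_r^*(\Gamma) \to C_r^*(N)$ satisfying $E_N(\lambda_g)=\lambda_g$ for $g\in N$ and $E_N(\lambda_g)=0$ otherwise, and then verify that $E_N(a)=a$ for every $a\in\mathcal{A}$, which immediately gives $\mathcal{A}\le C_r^*(N)$.

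For the construction, I would let $P$ denote the orthogonal projection from $\ell^2(\Gamma)$ onto $\ell^2(N)$ and set $E_N(a) := PaP$, viewed as an operator on $\ell^2(N)$. A direct computation shows that for $g\in N$ the operator $\lambda_g$ preserves $\ell^2(N)$ and acts there as the left regular representation of $N$, while for $g\notin N$ it sends $\ell^2(N)$ into $\ell^2(gN)$, which is orthogonal to $\ell^2(N)$. Consequently $P\lambda_g P$ equals $\lambda_g$ (now acting on $\ell^2(N)$) or $0$ according to whether $g\in N$ or not. Since $a\mapsto PaP$ is completely contractive and carries $\mathbb{C}[\Gamma]$ into $\mathbb{C}[N]$, it extends continuously to carry $C_r^*(\Gamma)$ into the norm closure of $\mathbb{C}[N]$ inside $\mathbb{B}(\ell^2(N))$, namely $C_r^*(N)$; after identifying this with the canonical copy of $C_r^*(N)$ in $C_r^*(\Gamma)$, one obtains the desired $E_N$. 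By construction it is $C_r^*(N)$-bimodular and satisfies $\tau_0\circ E_N=\tau_0$.

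The remainder is a Fourier-coefficient comparison. Fix $a\in\mathcal{A}$ and set $b:=a-E_N(a)\in C_r^*(\Gamma)$. For $g\in N$, bimodularity gives $E_N(a\lambda_g^{*})=E_N(a)\lambda_g^{*}$, so trace-preservation yields $\tau_0(b\lambda_g^{*}) = \tau_0(a\lambda_g^{*}) - \tau_0\bigl(E_N(a\lambda_g^{*})\bigr) = 0$. For $g\notin N$, the support hypothesis on $a$ gives $\tau_0(a\lambda_g^{*})=0$, while $E_N(a)\in C_r^*(N)$ is a norm limit of elements of $\mathbb{C}[N]$, and so its $g$-th Fourier coefficient vanishes too, i.e.\ $\tau_0(E_N(a)\lambda_g^{*})=0$. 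Thus every Fourier coefficient of $b$ vanishes, which forces $\tau_0(b^{*}b)=\sum_{g\in\Gamma}|\tau_0(b\lambda_g^{*})|^{2}=0$; faithfulness of $\tau_0$ on $C_r^*(\Gamma)$ then gives $b=0$, i.e.\ $a=E_N(a)\in C_r^*(N)$.

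The only step requiring any real care is checking that $a\mapsto PaP$ lands in $C_r^*(N)$ and not merely in $\mathbb{B}(\ell^2(N))$, a small but classical verification that is essentially the content of the trace-preserving conditional expectation onto a subgroup subalgebra. Notably, normality of $N$ is never used in this argument, so the same reasoning would give the stronger statement for an arbitrary subgroup.
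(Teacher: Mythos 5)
Your proof is correct and follows essentially the same route as the paper: both construct the canonical conditional expectation $\mathbb{E}_N$ by compressing with the projection onto $\ell^2(N)$ and then verify $\mathbb{E}_N(a)=a$ from the support hypothesis via a matrix-coefficient computation. The only cosmetic difference is the last step --- you compare Fourier coefficients of $a-\mathbb{E}_N(a)$ and invoke Parseval together with faithfulness of $\tau_0$, whereas the paper checks directly that $a$ preserves each right-coset subspace $\ell^2(Ns)$; your closing observation that normality of $N$ is never used applies equally to the paper's argument.
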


\begin{proof}
Let $\mathbb{E}_N:C_r^*(\Gamma)\to C_r^*(N)$ be the canonical conditional expectation. For any $a\in C_r^*(\Gamma)$, we have the following commutative diagram.
\begin{center}
   \begin{tikzcd}[row sep=large,column sep=huge]
\ell^2(N) \arrow{r}{\mathbb{E}_N(a)} \arrow[hookrightarrow]{d}{i_N} & \ell^2(N) \arrow[hookrightarrow]{d}{i_N}
\\
    \ell^2(\Gamma) \arrow[rightarrow]{r}{a} & \ell^2(\Gamma)
\end{tikzcd}
\end{center}
Let $I$ be a collection of right coset representatives for $N$ in $\Gamma$.
We view $C_r^*(N)\hookrightarrow C_r^*(\Gamma)\curvearrowright\ell^2(\Gamma)=\bigoplus_{s\in I}\ell^2(Ns)$, which is
explicitly given by
\[a\xi:=[a(\xi s^{-1})]s,~\forall~\xi\in \ell^2(Ns).\]
Let $P_N$ denote the projection from $\ell^2(\Gamma)$ onto $\ell^2(N)$. Note that $\mathbb{E}_N(a)=P_N\circ a\circ i_N$ for all $a\in C_r^*(\Gamma)$. Now, if $a\in C_r^*(\Gamma)$ is such that $\text{Supp}(a)\subseteq N$, we claim that $\mathbb{E}_N(a)=a$. Observe that for all $a\in C_r^*(\Gamma)$ and $\xi\in\ell^2(\Gamma)$,
\[a(\xi s^{-1})=a(\rho(s)\xi)=\rho(s)(a\xi)=(a\xi)s^{-1}, ~s\in\Gamma.\]
Now, since $\mathbb{E}_N(a)(\xi)=[P_Na(\xi s^{-1})]s$, it is enough to show that $a(\xi s^{-1})\subset\ell^2(N)$ for all $\xi\in \ell^2(Ns)$. Now, for all $n_1, n_2\in N$ and for all $s,t\in\Gamma$ with $ts^{-1}\not\in N$,
\begin{align*}
\langle a(n_1s), n_2t\rangle&=\tau_0\left(an_1st^{-1}n_2^{-1}\right)\\&=\tau_0(an_1n_2^{-1}(n_2st^{-1}n_2^{-1}))\\&=   \tau_0\left(an_1n_2^{-1}\tilde{g}_2\right)~~(\tilde{g}_2:=n_2st^{-1}n_2^{-1}\not\in N)
\end{align*}
Since $\text{Supp}(a)\subset N$ and $n_1n_2^{-1}\tilde{g}_2\not\in N$, we see that
\[\langle a(n_1s), n_2t\rangle=\tau_0\left(an_1n_2^{-1}\tilde{g}_2\right)=0.\]
The claim follows.
\end{proof}
\subsection{General Strategy}\label{subsection: general strategy}
Given an invariant $C^*$-subalgebra $\mathcal{A}\le C_r^*(\Gamma)$, it is not clear a prior that $\mathcal{A}$ is the image of a conditional expectation. As such, the \say{bimodule comparing technique} employed by the second named author in \cite{jiangskalski2021maximal,jiang2021maximal,jiang2020maximal} and exploited fully in \cite{amrutam2023invariant} is not applicable. We work with the canonical conditional expectations $\{\mathbb{E}_t: t\in\Gamma\}$ to circumvent this issue. Given $a\in\mathcal{A}$ and $t\in\Gamma$, we compare $\mathbb{E}_t(a)$ with $\mathbb{E}_s(a)$ for a suitably chosen $s$. The following proposition makes this idea precise and outlines the strategy.
\begin{prop}
\label{prop:genstrategy}
Let $\Gamma$ be a torsion-free discrete group. Let $\mathcal{A}\le C_r^*(\Gamma)$ be a $\Gamma$-invariant C$^*$-subalgebra. Assume that the following two conditions hold:
\begin{enumerate}
    \item $\mathbb{E}_t(\mathcal{A})\subset\mathcal{A}$ for all elements $t\in\Gamma$.
    \item For each non-identity  element $s\in\Gamma$, there exists $e\neq t\in\Gamma$ such that  $s$ is free from $t$ in the sense that $\langle s,t\rangle\cong \langle s\rangle \star\langle t\rangle\cong \mathbb{F}_2$.
\end{enumerate}
Then, $\mathcal{A}=C_r^*(N)$ for some normal subgroup $N\triangleleft\Gamma$.
\end{prop}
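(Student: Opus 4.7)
The plan is to define the candidate normal subgroup
\[N := \{g \in \Gamma : \lambda(g) \in \mathcal{A}\}\]
and show $\mathcal{A} = C_r^*(N)$. First I verify that $N$ is a normal subgroup of $\Gamma$: it contains $e$ since $1 \in \mathcal{A}$, closure under products and inversion follows from $\mathcal{A}$ being a unital $*$-subalgebra and $\lambda$ a homomorphism, and normality is the $\Gamma$-invariance of $\mathcal{A}$, via $\lambda(h\gamma h^{-1}) = \lambda(h)\lambda(\gamma)\lambda(h)^*$. The inclusion $C_r^*(N) \subseteq \mathcal{A}$ is then immediate because $\mathcal{A}$ is norm-closed and contains $\lambda(N)$.

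The nontrivial direction is $\mathcal{A} \subseteq C_r^*(N)$. By Proposition~\ref{prop:supoprtofsubalg}, it suffices to show $\text{Supp}(a) \subseteq N$ for every $a \in \mathcal{A}$. So fix $a \in \mathcal{A}$ and $s \in \text{Supp}(a)$ with $s \ne e$; the crux is to prove $\lambda(s) \in \mathcal{A}$. By hypothesis~(1), the element $b := \mathbb{E}_s(a) \in \mathcal{A} \cap C_r^*(\langle s \rangle)$ satisfies $\tau_0(b\lambda(s)^*) = \tau_0(a\lambda(s)^*) \ne 0$. Since $\Gamma$ is torsion-free and $s \ne e$, $\langle s \rangle \cong \mathbb{Z}$, and identifying $C_r^*(\langle s \rangle) \cong C(\mathbb{T})$ via $\lambda(s) \leftrightarrow z$ displays $b$ as a continuous function on $\mathbb{T}$ with nonvanishing Fourier coefficient at $z$.

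To finish, I invoke hypothesis~(2) to pick $t \ne e$ with $\langle s, t \rangle \cong \mathbb{F}_2$, so that $\langle s^k \rangle \cap \langle t \rangle = \{e\}$ for every $k \ne 0$. I would then exploit two complementary families of operations preserved by $\mathcal{A}$: the conditional expectations $\mathbb{E}_{s^k}$, which by~(1) act on $C(\mathbb{T})$ as averaging over the $\mathbb{Z}/k$-rotation $z \mapsto \zeta z$; and the conjugations $\alpha_{t^j}$, which by freeness move each $\lambda(s^n)$ ($n \ne 0$) transversely out of $C_r^*(\langle s \rangle)$. Using Kesten-type norm estimates within $\langle s, t \rangle \cong \mathbb{F}_2$, the Cesaro averages $\frac{1}{N}\sum_{k=0}^{N-1}\alpha_{t^k}(c)$ converge in operator norm to $\mathbb{E}_t(c)$ for $c \in C_r^*(\langle s, t \rangle)$, killing the transverse contributions and reducing them to scalars when restricted to $C_r^*(\langle s \rangle)$. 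Combining these with an inclusion-exclusion over the $\mathbb{E}_{s^k}$'s to peel off the higher-order Fourier modes of $b$ should, in the norm limit, isolate $\lambda(s)$ as an element of $\mathcal{A}$.

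The main obstacle I anticipate is this final isolation step: while the Fourier-level extraction of $\lambda(s)$ from $b$ is straightforward, transferring it to operator-norm convergence inside the norm-closed algebra $\mathcal{A}$ requires careful quantitative use of the freeness of $s$ and $t$, both to guarantee transversality of the conjugation action and to invoke the free-group norm bounds with uniform control as $N \to \infty$.
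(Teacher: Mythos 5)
Your reduction to showing $\lambda(s)\in\mathcal{A}$ for every $s\in\mathrm{Supp}(a)$, via the subgroup $N=\{g:\lambda(g)\in\mathcal{A}\}$ and Proposition~\ref{prop:supoprtofsubalg}, matches the paper exactly. The gap is in the extraction step, which is the entire content of the proposition, and the mechanism you sketch for it cannot be made to work. First, operations internal to $C_r^*(\langle s\rangle)\cong C(\mathbb{T})$ --- sums, products, adjoints, norm limits, and the expectations $\mathbb{E}_{s^k}$ --- all commute with the flip $z\mapsto \bar z$, so if $b=\mathbb{E}_s(a)$ happens to equal, say, $z+z^{-1}$ (which has $\hat b(1)\neq 0$), no amount of Fourier surgery inside $C(\mathbb{T})$ will ever produce $z=\lambda(s)$; the $s\leftrightarrow s^{-1}$ symmetry must be broken by the conjugation action. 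Second, the inclusion--exclusion you propose is a M\"obius-type inversion $\sum_d\mu(d)\mathbb{E}_{s^d}(b)$ whose terms have no norm decay, so it does not converge, and even formally it would only isolate $\hat b(1)z+\hat b(-1)z^{-1}$. Third, the one way you do bring $t$ into play --- Ces\`aro averaging of conjugates to realize $\mathbb{E}_t$ --- is useless here, because $\langle s\rangle\cap\langle t\rangle=\{e\}$ forces $\mathbb{E}_t(c)=\tau_0(c)1$ for $c\in C_r^*(\langle s\rangle)$: it collapses everything you have built to scalars.

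The missing idea is to use the freeness of $s$ and $t$ multiplicatively rather than by averaging. The paper forms the product $y=\mathbb{E}_s(a)\cdot\mathbb{E}_{tst^{-1}}(\lambda(t)a\lambda(t)^*)$, whose Fourier support consists of the words $s^k t s^m t^{-1}$; by Lemma~\ref{lem: trick to handle primitivity} the only such word lying in the cyclic group generated by $u=stst^{-1}$ (besides $e$) is $u$ itself, so $\mathbb{E}_u(y)=\tau_0(a\lambda(s)^*)^2\,\lambda(stst^{-1})\in\mathcal{A}$ --- this is where a single group element is first isolated, and it applies hypothesis~(1) to the element $u$, not to powers of $s$. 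A further round with $t^2$ in place of $t$, together with explicit word manipulations (obtaining successively $stst^{-1}$, $st^2st^{-2}$, then $s^2$, then $sts^2t^{-1}$, then $(st)s(st)^{-1}$), recovers $\lambda(s)$ itself using the $\Gamma$-invariance of $\mathcal{A}$. Without some substitute for this cross-term argument, your outline does not close.
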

Many natural classes of groups satisfy the second assumption of Proposition~\ref{prop:genstrategy}. Recall that a discrete group $\Gamma$ has property $P_{\text{nai}}$ if, for any finite subset $F\subset\Gamma\setminus\{e\}$, there exists $s\in \Gamma$ of infinite order such that $\langle s, t\rangle\cong \langle s \rangle\star\langle t \rangle$ for all $t\in F$ (see \cite[Definition~4]{bekka1994some}).

It was shown in \cite[Theorem~3]{bekka1994some} that every Zariski-dense discrete subgroup $\Gamma$ of a connected simple Lie group of real-rank $1$ and trivial center has property $P_{\text{nai}}$. Moreover, every acylindrically hyperbolic group with no non-trivial finite normal subgroups also has property $P_{\text{nai}}$ (see \cite[Theorem~0.2]{abbott2019property}). The relatively
hyperbolic groups without non-trivial finite normal subgroups also satisfy the
property $P_{\text{nai}}$~\cite{arzhantseva2007relatively}. The groups, acting faithfully and geometrically
on a non-Euclidean, possibly reducible CAT($0$) cube complex, also have property
$P_{\text{nai}}$~\cite{kar2016ping}. More recently, it was shown in \cite{lou2024property} that big mapping class groups have property $P_{\text{nai}}$ in the setting of infinite type surfaces (such groups are not necessarily acylindrically hyperbolic).

That there are groups that satisfy the first assumption of Proposition~\ref{prop:genstrategy} is non-trivial and requires a bit more work. We show that all torsion-free, non-amenable hyperbolic groups satisfy a weak version of this property; this is enough to finish the proof. The following lemma allows us to do so.

\begin{lem}\label{lem: trick to handle primitivity}
Let $\mathbb{F}_2=\langle t,h\rangle$ be the non-abelian free group of rank two with generators $t$ and $h$. Let $n_1,n_2$ be non-zero integers. Assume that  $t^{n_1}ht^{n_2}h^{-1}$ commutes with $t^kht^mh^{-1}$ for some integers $k,m$, then either $k=n_1,m=n_2$ or $k=m=0$.
\end{lem}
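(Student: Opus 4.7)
My plan is to exploit the classical fact that centralizers in free groups are cyclic, generated by a primitive (non-proper-power) element, and then do a syllable-length comparison.

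\textbf{Step 1 (Reduce to centralizer analysis).} Set $a := t^{n_1}ht^{n_2}h^{-1}$ and $b := t^kht^mh^{-1}$. Since $n_1, n_2 \neq 0$, the word $a$ is reduced with four syllables, and its first syllable $t^{n_1}$ and last syllable $h^{-1}$ are in different generators, so $a$ is cyclically reduced. In particular $a \neq e$, so its centralizer $C_{\mathbb{F}_2}(a)$ is infinite cyclic, generated by some primitive element $c$ with $a = c^p$.

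\textbf{Step 2 (Show $a$ is primitive, so $c = a$).} Suppose for contradiction that $a = c^p$ for some primitive cyclically reduced $c$ and some $|p|\ge 2$. Replacing $c$ by $c^{-1}$ if necessary assume $p \ge 2$. Because $a$ is cyclically reduced with syllable sequence $(t^{n_1}, h, t^{n_2}, h^{-1})$, the syllable sequence of $c$ repeated $p$ times must match this sequence; hence the syllable length of $c$ divides $4$ and is either $1$ or $2$. Syllable length $1$ forces $c$ to be a power of a single generator, whose powers cannot produce an alternating $t$-$h$ pattern. Syllable length $2$ forces $(s_1, s_2, s_1, s_2) = (t^{n_1}, h, t^{n_2}, h^{-1})$, which in particular requires $h = h^{-1}$, impossible in $\mathbb{F}_2$. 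Thus $a$ is primitive and $C_{\mathbb{F}_2}(a) = \langle a \rangle$.

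\textbf{Step 3 (Identify $b$ as a power of $a$ and conclude).} The hypothesis gives $b \in C_{\mathbb{F}_2}(a) = \langle a \rangle$, so $b = a^q$ for some $q \in \mathbb{Z}$. If $q = 0$ then $t^k h t^m h^{-1} = e$; since $\mathbb{F}_2$ is free on $\{t,h\}$, iterated cancellation forces $k = m = 0$. If $q \ne 0$, compare reduced forms: the fully reduced expression of $a^q$ has $4|q|$ syllables (since $a$ is cyclically reduced, no cancellation occurs between successive copies of $a$ or of $a^{-1}$), while $b = t^kht^mh^{-1}$ has at most $4$ syllables. Thus $|q| = 1$. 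The case $q = -1$ yields $a^{-1} = h t^{-n_2} h^{-1} t^{-n_1}$, whose reduced form begins with $h$ and ends with $t^{-n_1}$, incompatible with the form $t^k h t^m h^{-1}$. Therefore $q = 1$, and matching syllables in $t^k h t^m h^{-1} = t^{n_1} h t^{n_2} h^{-1}$ gives $k = n_1$ and $m = n_2$.

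The only mild obstacle is Step 2, the primitivity of $a$; once primitivity is in hand, the rest is bookkeeping on reduced words. No results beyond the classical description of centralizers in free groups and unique reduced form are needed.
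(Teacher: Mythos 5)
Your proof is correct, but it takes a genuinely different route from the paper. The paper argues directly: it writes out the two products $(t^{n_1}ht^{n_2}h^{-1})(t^kht^mh^{-1})$ and $(t^kht^mh^{-1})(t^{n_1}ht^{n_2}h^{-1})$, reduces them, and compares initial segments, using nothing beyond uniqueness of reduced words in $\mathbb{F}_2$. You instead invoke the classical structure of centralizers in free groups, show that $a=t^{n_1}ht^{n_2}h^{-1}$ is not a proper power (so $C_{\mathbb{F}_2}(a)=\langle a\rangle$), and then pin down $b=a^q$ with $q\in\{0,1\}$ by syllable counting. Your approach is more conceptual — it actually identifies the centralizer and explains why only the two stated solutions can occur — at the cost of importing the centralizer theorem and the fact that the primitive root of a cyclically reduced word is cyclically reduced. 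One small expository gap in your Step 2: you assert that the syllable sequence of $c^p$ is the $p$-fold repetition of that of $c$, which requires ruling out syllable merging at the junctions between consecutive copies of $c$; this does hold here, because $c$ must begin with a $t$-letter and end with an $h$-letter (to match the first and last syllables of $a$), so consecutive copies meet in distinct generators. With that observation made explicit, the argument is complete; the remaining bookkeeping in Step 3 (the cases $q=0$, $q=-1$, $q=1$) is all correct.
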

\begin{proof}
By assumption, we have
\begin{align}\label{eq: commuting relation identity in free groups}
(t^{n_1}ht^{n_2}h^{-1})(t^kht^mh^{-1})
=(t^kht^mh^{-1})(t^{n_1}ht^{n_2}h^{-1}).
\end{align}
If $k=0$, then $(t^{n_1}ht^{n_2}h^{-1})(t^kht^mh^{-1})=t^{n_1}ht^{n_2+m}h^{-1}$, after possibly putting in reduced form in $\mathbb{F}_2$, is a reduced word with initial letters $t^{n_1}$; Notice that $(t^kht^mh^{-1})(t^{n_1}ht^{n_2}h^{-1})=ht^mh^{-1}t^{n_1}ht^{n_2}h^{-1}$ starts with $t^{n_1}$ only when $m=0$. Therefore, in this case, $k=m=0$ is proved.

We may assume $k\neq 0$ now. Then
the LHS of \eqref{eq: commuting relation identity in free groups}, after a possible reduction, is a reduced word with initial letters $t^{n_1}ht^{n_2}h^{-1}t^k$. Comparing it with the initial letters of the word on the RHS of \eqref{eq: commuting relation identity in free groups}, it is not hard to deduce that $m\neq 0$ and hence also $k=n_1$, $m=n_2$.
\end{proof}
We now prove Proposition~\ref{prop:genstrategy}.
\begin{proof}[Proof of Proposition~\ref{prop:genstrategy}]
Let $\mathcal{A}$ be a non-trivial invariant C$^*$-subalgebra of $C_r^*(\Gamma)$. Let $a\in\mathcal{A}$ be a non-zero element. Without any loss of generality, we may assume $\tau_0(a)=0$ since $\mathbb{C}\subseteq\mathcal{A}$. We aim to show that $\lambda(g)\in\mathcal{A}$ whenever $\tau_0(a\lambda(g)^*)\ne 0$.  Indeed, assume  this is proved, then set $N=\{g\in \Gamma:\lambda(g)\in \mathcal{A}\}$. Clearly, $N$ is a normal subgroup in $\Gamma$ and $C^*_r(N)\subseteq \mathcal{A}$. Moreover, since $\text{Supp}(a)\subset\mathcal{N}$ for all $a\in\mathcal{A}$, using Proposition~\ref{prop:supoprtofsubalg}, we will deduce that $\mathcal{A}\subseteq C^*_r(N)$.

Let $a\in\mathcal{A}$ be a non-zero element. Fix $g\in\Gamma$ such that $\tau_0(a\lambda(g)^*)\ne0$. Let $h\in \Gamma$ be such that $g$ is free from $h$, i.e., $\langle g, h\rangle=\langle g\rangle\star\langle h\rangle\cong \mathbb{F}_2$. We will write $g$ instead of $\lambda(g)$ for ease of notation. Let $0<\epsilon<1$ be given. Using the first bit of the assumption, we see that $ \mathbb{E}_{g}(a)\in\mathcal{A}$. Since $\mathbb{E}_g(a)\in C_r^*(\langle g \rangle)$ and the group ring $\mathbb{C}[\langle g\rangle]$ is norm dense in $C_r^*(\langle g \rangle)$, we can find a finite subset $F\subset \mathbb{Z}\setminus\{1\}$ such that
\[\left\|\mathbb{E}_{g}(a)-\sum_{k\in F}c_{g^k}g^k-c_gg\right\|<\frac{\epsilon}{2(1+\|a\|)}.\]

Therefore, using the fact that $\tau_0\left(ag^{-1}\right)=\tau_0\left(\mathbb{E}_{g}(ag^{-1})\right)=\tau_0\left(\mathbb{E}_{g}(a)g^{-1}\right)$, we see that
\[\]
\begin{equation}
\label{eq:firstoneprimit}
  \left\| \mathbb{E}_{g}(a)-\left(\sum_{k\in F}c_{g^k}g^k+\tau_0(ag^{-1})g\right)\right\|<\frac{\epsilon}{(1+\|a\|)}.
\end{equation}
Note that this, in particular, implies
\[\left\|\sum_{k\in F}c_{g^k}g^k+\tau_0(ag^{-1})g\right\|\le\left\|\sum_{k\in F}c_{g^k}g^k+\tau_0(ag^{-1})g-\mathbb{E}_g(a)\right\|+\left\|\mathbb{E}_g(a)\right\|<\epsilon+\|a\|<1+\|a\|.\]
Since $\mathcal{A}$ is $\Gamma$-invariant, $\lambda(h)a\lambda(h)^*\in\mathcal{A}$. Moreover, note that
\begin{align*}
&\mathbb{E}_{hgh^{-1}}(\lambda(h)a\lambda(h)^*)=\mathbb{E}_{hgh^{-1}}(\lambda(h)(a-\mathbb{E}_g(a))\lambda(h)^*)+\mathbb{E}_{hgh^{-1}}(\lambda(h)\mathbb{E}_g(a)\lambda(h)^*)\\&\approx_{\frac{\epsilon}{1+\|a\|}}\mathbb{E}_{hgh^{-1}}\left(\lambda(h)\left(\sum_{\substack{i=1\\g_i\not\in\langle g\rangle}}^mc_{g_i}\lambda(g_i)\right)\lambda(h)^*\right)+\mathbb{E}_{hgh^{-1}}\left(\sum_{k\in F}c_{g^k}hg^kh^{-1}+\tau_0(ag^{-1})hgh^{-1}\right)\\&=\sum_{k\in F}c_{g^k}hg^kh^{-1}+\tau_0(ag^{-1})hgh^{-1}.
\end{align*}
Note that by the first part of our assumption, $ \mathbb{E}_{hgh^{-1}}(\lambda(h)a\lambda(h)^*)\in\mathcal{A}$ and,
\begin{equation}
\label{eq:withoutAPfreeprimit}
    \left\|\mathbb{E}_{hgh^{-1}}(\lambda(h)a\lambda(h)^*)-\sum_{k\in F}c_{g^k}hg^kh^{-1}-\tau_0(ag^{-1})hgh^{-1}\right\|<\frac{\epsilon}{1+\|a\|}.
\end{equation}
Multiplying elements in equation~\eqref{eq:firstoneprimit} with equation~\eqref{eq:withoutAPfreeprimit}, we get that
\begin{equation}
  y:=\mathbb{E}_{g}(a)  \mathbb{E}_{hgh^{-1}}(\lambda(h)a\lambda(h)^*)\approx_{2\epsilon}\sum_{\substack{k,m\in F\cup\{1\} \\ (k,m)\neq (1,1)}}c_{g^k}c_{g^m}g^khg^mh^{-1}+\tau_0(ag^{-1})^2ghgh^{-1}.
\end{equation}
Note that in the above, we have implicitly abused notation by writing $c_g=\tau_0(ag^{-1})$.

Write $ghgh^{-1}=u$ for some element $u\in\Gamma$. Again using the first part of the assumption on $y$ and noticing that $c_e=0$, we claim that $$\mathbb{E}_{u}(y)\approx_{2\epsilon}\tau_0(ag^{-1})^2ghgh^{-1}.$$

To see this, it suffices to argue that
$\langle u\rangle\cap \{g^khg^mh^{-1}: k,m\in\mathbb{Z}\}=\{e,ghgh^{-1}\}$.
Assume that $u^i=g^khg^mh^{-1}$ for some $i\neq 0$ and $k,m\in\mathbb{Z}$, then $ghgh^{-1}=u$ commutes with $g^khg^mh^{-1}$, hence  $k=m=1$ by Lemma \ref{lem: trick to handle primitivity}.
Since $\tau_0(ag^{-1})\ne0$, and $\epsilon>0$ is arbitrary (independent of $a$, $g$ and $h$), we get that $ghgh^{-1}\in\mathcal{A}$. Now, replacing $h$ by $h^2$, and using the same argument (noting that $\langle g, h^2\rangle=\langle g\rangle\star\langle h^2\rangle\cong \mathbb{F}_2$), we get that $gh^2gh^{-2}\in\mathcal{A}$. Consequently,
$$h(hg^{-1}h^{-1}g)h^{-1}=h^2g^{-1}h^{-1}gh^{-1}=h^2g^{-1}h^{-2}hgh^{-1}=(gh^2gh^{-2})^{-1}ghgh^{-1}\in\mathcal{A}.$$
Therefore, $hg^{-1}h^{-1}g\in\mathcal{A}$. Hence,
$$g^2=(ghgh^{-1})(hg^{-1}h^{-1}g)\in\mathcal{A}.$$
Therefore, $\mathbb{E}_g(a)(hg^{2}h^{-1})\in\mathcal{A}$. Observe that
$$\left\|\mathbb{E}_g(a)(hg^{2}h^{-1})-\sum_{k\in F}c_{g^k}g^khg^{2}h^{-1}-\tau_0(ag^{-1})ghg^{2}h^{-1}\right\|<
\epsilon.$$

Write $ghg^{2}h^{-1}=v$ for some element $v\in\Gamma$.
Using Lemma \ref{lem: trick to handle primitivity}, we see that $$\left\|\tau_0(ag^{-1})ghg^{2}h^{-1}-\mathbb{E}_{v}\left(\mathbb{E}_g(a)(hg^{2}h^{-1})\right)\right\|<\epsilon.$$
Note that $\mathbb{E}_{v}\left(\mathbb{E}_g(a)(hg^2h^{-1})\right)\in\mathcal{A}$.
Since $\epsilon>0$ is arbitrary, and independent of $a$, $g$ and $h$, we obtain that $\tau_0(ag^{-1})ghg^{2}h^{-1}\in\mathcal{A}$. Since $\tau_0(ag^{-1})\ne 0$, we see that $ghg^{2}h^{-1}\in\mathcal{A}$. We recall that $ghgh^{-1}\in\mathcal{A}$. Therefore,
$$ghg^{2}h^{-1}(ghgh^{-1})^{-1}=ghg^{2}h^{-1}hg^{-1}h^{-1}g^{-1}=gh g h^{-1}g^{-1}=(gh) g (gh)^{-1}\in\mathcal{A}.$$
Since $\mathcal{A}$ is invariant, it follows that $g\in\mathcal{A}$. The claim follows.
\end{proof}


\section{An averaging principle and its consequences}
We start by reproving the following singularity phenomenon, previously leveraged to establish results on rigidity (refer to works such as~\cite{KK, hartman2023stationary, kalantar2022invariant, bader2021charmenability, amrutam2024subalgebras} among others, for examples). As stated below, the claim has appeared in \cite[Lemma~4.1]{amrutam2024relative}. We restate it with all the details for the sake of completion.
\begin{lemma}
\thlabel{twodelta}
Let $\Gamma\curvearrowright X$ be a continuous action on a compact Hausdorff space $X$. Let $\tau$ be a state on $C(X)\rtimes_r\Gamma$ such that $\tau|_{C(X)}=a\delta_x+(1-a)\delta_y$ for some $x\ne y\in X$. Then,
$\tau(\lambda(s))=0$ for all $s \in \Gamma$ with $s\{x,y\}\cap\{x,y\}=\emptyset$.
\begin{proof}
Let $s\in\Gamma$ be such that the intersection $s\{x,y\}\cap\{x,y\}$ is empty. Leveraging Urysohn's lemma, choose $f$ with $0\le f\le 1$, which has a value of $1$ on the set $\{x,y\}$ and $0$ on the set $\{sx,sy\}$. An application of the Cauchy-Schwarz inequality reveals that:
\begin{align*}
\left|\tau(f\lambda(s))\right|^2= \left|\tau(\sqrt{f}\sqrt{f}\lambda(s))\right|^2\leq \tau(f)\tau(s^{-1}.f).
\end{align*}
Since $s\{x,y\}\cap\{x,y\}=\emptyset$ and the state $\tau$ restricted to $C(X)$ equals $a\delta_x+(1-a)\delta_y$, it follows that $\tau(s^{-1}.f)=0$. This leads to the conclusion that $\tau(f\lambda(s))=0$. The Cauchy-Schwarz inequality is employed once more on $|\tau((1-f)\lambda(s))|^2$ to obtain
\begin{align*}
|\tau\left((1-f)\lambda(s)\right)|^2&=\left|\tau\left(\sqrt{1-f}\sqrt{1-f}\lambda(s)\right)\right|^2\\&\le\tau\left(1-f\right)\tau\left(\lambda(s^{-1})(1-f)\lambda(s)\right)\\&=\tau(1-f)\tau(s^{-1}.(1-f))
\end{align*}
We now observe that $\tau((1-f)\lambda(s))=0$ (this is due to the fact that $f$ is $1$ on $\{x,y\}$, making $\tau(1-f)=0$). It now follows that
$$\tau(\lambda(s))=\tau(f\lambda(s))+\tau((1-f)\lambda(s))=0,$$ which concludes the proof.
\end{proof}
\end{lemma}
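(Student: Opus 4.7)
The plan is to exploit the separation between $\{x,y\}$ and $\{sx,sy\}$ (guaranteed by the hypothesis $s\{x,y\}\cap\{x,y\}=\emptyset$) via a Urysohn cutoff function, and then to bound $\tau(\lambda(s))$ through two applications of the Cauchy--Schwarz inequality for states on a $C^*$-algebra.

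First, since $X$ is compact Hausdorff (hence normal) and both $\{x,y\}$, $\{sx,sy\}$ are finite disjoint sets, Urysohn's lemma supplies a continuous $f\colon X\to[0,1]$ with $f\equiv 1$ on $\{x,y\}$ and $f\equiv 0$ on $\{sx,sy\}$. The strategy is then to split $\lambda(s)=f\lambda(s)+(1-f)\lambda(s)$ and to show that both summands have vanishing $\tau$-value, each for a different reason.

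For the first term, write $f\lambda(s)=\sqrt{f}\cdot\sqrt{f}\lambda(s)$ and apply Cauchy--Schwarz for the state $\tau$:
\[
|\tau(f\lambda(s))|^2 \le \tau(f)\cdot\tau\bigl(\lambda(s)^*f\lambda(s)\bigr)=\tau(f)\cdot\tau(s^{-1}.f),
\]
where the second equality uses the crossed-product covariance relation $\lambda(s)^*g\lambda(s)=s^{-1}.g$ for $g\in C(X)$. Since $\tau|_{C(X)}=a\delta_x+(1-a)\delta_y$ and $f$ vanishes at $sx,sy$, the factor $\tau(s^{-1}.f)=af(sx)+(1-a)f(sy)$ is zero, hence $\tau(f\lambda(s))=0$. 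For the complementary piece, the same manipulation yields
\[
|\tau((1-f)\lambda(s))|^2 \le \tau(1-f)\cdot\tau(s^{-1}.(1-f)),
\]
but now the other factor vanishes: $\tau(1-f)=1-[af(x)+(1-a)f(y)]=0$ since $f\equiv 1$ on $\{x,y\}$.

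Summing the two vanishing contributions gives $\tau(\lambda(s))=\tau(f\lambda(s))+\tau((1-f)\lambda(s))=0$, as required. I don't anticipate a serious obstacle; the two things to handle carefully are the covariance identification $\lambda(s)^*g\lambda(s)=s^{-1}.g$ (which turns the Cauchy--Schwarz upper bound into something computable purely from the restriction $\tau|_{C(X)}$) and the mild observation that normality of $X$ legitimately produces the Urysohn function. The whole argument turns on a single structural idea: a two-point measure $a\delta_x+(1-a)\delta_y$ is rigid enough that any element moving $\{x,y\}$ completely off itself cannot be detected by $\tau$.
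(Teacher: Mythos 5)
Your proposal is correct and follows essentially the same route as the paper: the same Urysohn cutoff $f$, the same two applications of Cauchy--Schwarz killing $\tau(f\lambda(s))$ via $\tau(s^{-1}.f)=0$ and $\tau((1-f)\lambda(s))$ via $\tau(1-f)=0$, and the same final splitting $\lambda(s)=f\lambda(s)+(1-f)\lambda(s)$. No gaps.
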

The above fact allows us to do an averaging involving elements from a given set rather than a subgroup. This observation has been used recently in \cite{amrutam2024crossed}. Instead of averaging everything to $\epsilon$, we can only average the elements not in the given set's centralizer. We make it precise below. Recall that an action $\Gamma\curvearrowright X$ is said to have \say{north pole-south pole}-dynamics, if for every infinite order element $g\in\Gamma$, there are precisely two fixed points $x_g^+$ and $x_g^-$ on the $\Gamma$-space $X$ such that $g^nx\xrightarrow[]{n\to\infty}x_g^+$ for all $x\ne x_g^{-}$.

\begin{prop}
\label{prop:plclosure}
Suppose that $\Gamma\curvearrowright X$ has north pole-south pole dynamics. Let $s\in\Gamma$ be an infinite order element. Denote by $x_s^+$ and $x_s^-$ the corresponding fixed points. Moreover, assume that $t\{x_s^+,x_s^-\}\cap \{x_s^+,x_s^-\}=\emptyset$ for all $t\not\in\langle s\rangle$.  Let $\{n_j\}_j\subset\mathbb{N}$ be any subsequence. Then, given $a\in C_r^*(\Gamma)$ and $\delta>0$, we can find $\{s_1,s_2,\ldots,s_m\}\subset\{s^{n_j}: j\in\mathbb{N}\}$ such that
\[\left\|\frac{1}{m}\sum_{j=1}^m\lambda(s_j)a\lambda(s_j)^*-\mathbb{E}_{s}(a)\right\|<\delta.\]
Here, $\mathbb{E}_{s}: C_r^*(\Gamma)\to C_r^*(\langle s \rangle)$ is the canonical conditional expectation.
\end{prop}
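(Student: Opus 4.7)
My plan is to recast the Proposition as a weak-convergence statement inside $C_r^*(\Gamma)$: specifically, I will argue that $\lambda(s^{n_j})\, a\, \lambda(s^{n_j})^* \to \mathbb{E}_s(a)$ in the weak (Banach-space) topology of $C_r^*(\Gamma)$. Once this is established, Mazur's theorem yields $\mathbb{E}_s(a) \in \overline{\conv\{\lambda(s^{n_j})\, a\, \lambda(s^{n_j})^*:j\in\mathbb{N}\}}^{\|\cdot\|}$, and a standard density observation (rational convex combinations can be rewritten as equal-weight averages, allowing repetitions) converts this into the required approximations by $\frac{1}{m}\sum_{j=1}^m \lambda(s_j)\, a\, \lambda(s_j)^*$.

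\textbf{Reduction to states, passage to the crossed product.} Since every bounded linear functional on a $C^*$-algebra is a complex linear combination of at most four states, it suffices to prove the pointwise convergence $\tau(\lambda(s^{n_j})\,a\,\lambda(s^{n_j})^*)\to\tau(\mathbb{E}_s(a))$ for every state $\tau$ on $C_r^*(\Gamma)$. The canonical unital $*$-homomorphism $C_r^*(\Gamma)\hookrightarrow C(X)\rtimes_r\Gamma$ is injective, this being built into the definition of the reduced crossed product via its faithful representation on $H\otimes\ell^2(\Gamma)$, so Hahn--Banach supplies a state extension $\tilde\tau$ of $\tau$ to $C(X)\rtimes_r\Gamma$. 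Put $\mu:=\tilde\tau|_{C(X)}$ and $\tilde\tau_j:=\tilde\tau\circ\Ad(\lambda(s^{n_j}))$; this is a bounded sequence of states, which by Banach--Alaoglu admits a weak$^*$-cluster point $\nu$.

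\textbf{Dynamical analysis.} The key step is to compute $\nu|_{C_r^*(\Gamma)}$. On $C(X)$, $\tilde\tau_j$ restricts to the push-forward $(s^{-n_j})_*\mu$; since $s^{-n_j}y\to x_s^-$ for every $y\neq x_s^+$ by the north--south pole dynamics, bounded convergence gives $(s^{-n_j})_*\mu\to c\,\delta_{x_s^+}+(1-c)\,\delta_{x_s^-}$ weakly, where $c=\mu(\{x_s^+\})$. Thus $\nu|_{C(X)}$ has this form, and invoking \thref{twodelta} together with the hypothesis $t\{x_s^+,x_s^-\}\cap\{x_s^+,x_s^-\}=\emptyset$ for $t\notin\langle s\rangle$ yields $\nu(\lambda(t))=0$ for every such $t$. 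Conversely, for $t\in\langle s\rangle$ the element $\lambda(t)$ is fixed by $\Ad(\lambda(s^{n_j}))$, so $\nu(\lambda(t))=\tilde\tau(\lambda(t))=\tau(\lambda(t))$. Matching these values with those of $\tau\circ\mathbb{E}_s$ on all group elements and extending by norm-continuity shows $\nu|_{C_r^*(\Gamma)}=\tau\circ\mathbb{E}_s$. Since the bounded scalar sequence $\tilde\tau_j(a)=\tau(\lambda(s^{n_j})\,a\,\lambda(s^{n_j})^*)$ has every cluster point equal to $\tau(\mathbb{E}_s(a))$, it converges to this value, which is the weak-convergence step.

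\textbf{Main obstacle.} The most delicate point is pinning down $\nu$ on all of $C_r^*(\Gamma)$ from knowledge of its $C(X)$-restriction: one must combine \thref{twodelta} for the $t\notin\langle s\rangle$ piece (where the assumption on fixed points is essential) with the commutation $s^{n_j}t=ts^{n_j}$ for $t\in\langle s\rangle$, and then promote the agreement from $\{\lambda(t):t\in\Gamma\}$ to all of $C_r^*(\Gamma)$ by density and boundedness. One also needs to be slightly careful about the boundary cases $c\in\{0,1\}$, where the limit measure collapses to a single Dirac; inspection of the proof of \thref{twodelta} shows it still applies. The supporting tools (Banach--Alaoglu for cluster points, Jordan decomposition for general functionals, Mazur's theorem for convex-hull closures) are entirely standard.
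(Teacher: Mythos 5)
Your proof is correct and rests on the same dynamical core as the paper's: extend the functional to $C(X)\rtimes_r\Gamma$, use the north--south dynamics to force the restriction to $C(X)$ of any limit functional to be a convex combination of $\delta_{x_s^+}$ and $\delta_{x_s^-}$, and invoke \thref{twodelta} together with the hypothesis on $\langle s\rangle$ to conclude that the limit functional factors through $\mathbb{E}_s$. The only difference is the final functional-analytic packaging: you prove weak convergence of the full sequence $\lambda(s^{n_j})a\lambda(s^{n_j})^*\to\mathbb{E}_s(a)$ (via the "every cluster point coincides" device, which neatly avoids the paper's passage to subsequences of functionals) and then apply Mazur's theorem, whereas the paper extracts a weak$^*$-limit of the translated functionals and runs a Hahn--Banach separation argument --- these are equivalent. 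One cosmetic caveat: after rationalizing the convex coefficients your equal-weight average may repeat some $s_j$'s, but this is harmless in every application of the proposition.
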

Our approach to substantiating the claim draws inspiration from the beautiful paper \cite{haagerup2016new}. We define $\Lambda$ as the set $\{s^{n_j}: j\in\mathbb{N}\}$. We then show that given any bounded functional $\omega$ on $C_r^*(\Gamma)$, we can find $\psi\in \overline{\{s.\omega: s\in\Lambda\}}^{\text{weak}^*}$, satisfying the condition $\psi=\psi\circ\mathbb{E}_{\Lambda}$. The assertion is then established by employing a conventional Hahn-Banach separation argument.
\begin{proof}
Given a bounded linear functional $\varphi$ on $C_r^*(\Gamma)$, let us extend it to a bounded linear functional $\eta$ on $C(X)\rtimes_r\Gamma$. Now $\eta$ can be expressed as $\eta=c_1\omega_1-c_2\omega_2+ic_3\omega_3-ic_4\omega_4$, with each $\omega_i\in S(C(X)\rtimes_r\Gamma)$ and each $c_i\in\mathbb{R}$, for $i=1,2,3,4$. Define $\nu_i$ as the restriction of $\omega_i$ to $C(X)$ for each $i$. Given that $s$ is of infinite order, it uniquely determines fixed points $x_s^+$ and $x_s^-$ in $X$, ensuring that $s^n x\xrightarrow[]{n\to\infty} x_s^+$ for all $x\ne x_s^{-}$. In particular, $s^{n_j} x\xrightarrow[]{j\to\infty} x_s^+$ for all $x\ne x_s^{-}$. Applying the dominated convergence theorem, it can be shown that $s^{n_j}\nu_i\xrightarrow{\text{weak}^*}a_i\delta_{x_s^+}+(1-a_i)\delta_{x_s^-}$. Note that $a_i=\nu_i(X\setminus x_s^-)$ for each $i=1,2,3,4$. Assuming, without loss of generality, up to taking a subsequence four times if necessary, $s^{n_j} \omega_i$ converges to some $\omega_i'\in S(C(X)\rtimes_r\Gamma)$ for each $i=1,2,3,4$. Note that $\omega_i'|_{C(X)}=a_i\delta_{x_s^+}+(1-a_i)\delta_{x_s^-}$. Define $\eta'=c_1\omega_1'-c_2\omega_2'+ic_3\omega_3'-ic_4\omega_4'$. Also note that even after passing to a subsequence four times, we still remain part of the given sequence $\Lambda$.

We let $\psi=\eta'|_{C_r^*(\Gamma)}$, and we claim that $\psi=\psi\circ\mathbb{E}_{ s }$. Since $t\{x_s^+,x_s^-\}\cap \{x_s^+,x_s^-\}=\emptyset$ for all $t\not\in\langle s\rangle$, it follows from \thref{twodelta} that $\omega_i'(\lambda(t))=0$ for each $i=1,2,3,4$. This shows that $\psi=\psi\circ\mathbb{E}_{ s }$. Finally, we can prove the claim using a standard Hahn-Banach separation argument (see, for example, \cite[Theorem~3.4]{bryder2018reduced}).
\end{proof}
While dealing with the product, we need a stronger version of the averaging process because we need to average finitely many elements simultaneously.
\begin{theorem}
\label{thm:finitelymanyaveraging}
Suppose that $\Gamma\curvearrowright X$ has north pole-south pole dynamics. Let $s\in\Gamma$ be an infinite order element. Denote by $x_s^+$ and $x_s^-$ the corresponding fixed points. Moreover, assume that $t\{x_s^+,x_s^-\}\cap \{x_s^+,x_s^-\}=\emptyset$ for all $t\not\in\langle s\rangle$.  Let $\{n_j\}_j\subset\mathbb{N}$ be any subsequence. Then, given finitely many elements $a_1,a_2,\ldots,a_n\in C_r^*(\Gamma)$ and $\epsilon>0$, we can find $\{s_1,s_2,\ldots,s_m\}\subset\{s^{n_j}: j\in\mathbb{N}\}$ such that
\[\left\|\frac{1}{m}\sum_{j=1}^m\lambda(s_j)a_i\lambda(s_j)^*-\mathbb{E}_{s}(a_i)\right\|<\epsilon,~\forall i=1,2,\ldots,n.\]
\end{theorem}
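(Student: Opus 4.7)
The plan is to run the Hahn-Banach/north-pole-south-pole argument of Proposition~\ref{prop:plclosure} simultaneously in all $n$ coordinates. Concretely, I will view the $n$-tuple $(a_1,\ldots,a_n)$ as a single element of the Banach space $C_r^*(\Gamma)^n$ (equipped with the supremum of coordinate norms) on which $\Gamma$ acts diagonally by conjugation, and show that the point $(\mathbb{E}_s(a_1),\ldots,\mathbb{E}_s(a_n))$ lies in the norm closure of the convex hull
\[
K:=\mathrm{conv}\bigl\{\bigl(\lambda(s^{n_j})a_i\lambda(s^{n_j})^*\bigr)_{i=1}^n:\ j\in\mathbb{N}\bigr\}.
\]
Any element of $K$ within $\epsilon$ (in every coordinate) of this limit tuple then produces the common sequence $s_1,\ldots,s_m\subset\{s^{n_j}\}$ demanded by the theorem.

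To verify the claimed containment, suppose by contradiction that $(\mathbb{E}_s(a_1),\ldots,\mathbb{E}_s(a_n))\notin\overline{K}^{\|\cdot\|}$. By Hahn-Banach separation there is a continuous linear functional $\Phi$ on $C_r^*(\Gamma)^n$ strictly separating the two; such $\Phi$ has the form $\Phi(b_1,\ldots,b_n)=\sum_{i=1}^n\varphi_i(b_i)$ for bounded linear functionals $\varphi_i$ on $C_r^*(\Gamma)$. As in the proof of Proposition~\ref{prop:plclosure}, extend each $\varphi_i$ to a bounded functional $\eta_i$ on $C(X)\rtimes_r\Gamma$, and decompose $\eta_i=c_{i,1}\omega_{i,1}-c_{i,2}\omega_{i,2}+ic_{i,3}\omega_{i,3}-ic_{i,4}\omega_{i,4}$ with each $\omega_{i,k}$ a state and $c_{i,k}\in\mathbb{R}$.

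The key simultaneity step is to extract a \emph{single} subsequence of $\{n_j\}$ along which $s^{n_j}.\omega_{i,k}$ converges in the weak-$*$ topology for every pair $(i,k)$, $1\le i\le n$, $1\le k\le 4$. Because there are only finitely many such pairs ($4n$), we may iterate the extraction procedure from the proof of Proposition~\ref{prop:plclosure} that many times and arrive at a common subsequence $\{n_{j_\ell}\}\subset\{n_j\}$ with $s^{n_{j_\ell}}.\omega_{i,k}\xrightarrow{\mathrm{w}^*}\omega_{i,k}'$ for all $(i,k)$, where $\omega_{i,k}'|_{C(X)}=\alpha_{i,k}\delta_{x_s^+}+(1-\alpha_{i,k})\delta_{x_s^-}$ by dominated convergence and north-pole-south-pole dynamics. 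Defining $\eta_i'$ from the $\omega_{i,k}'$'s as before and setting $\psi_i:=\eta_i'|_{C_r^*(\Gamma)}$, Lemma~\thref{twodelta} combined with the hypothesis $t\{x_s^+,x_s^-\}\cap\{x_s^+,x_s^-\}=\emptyset$ for $t\notin\langle s\rangle$ yields $\psi_i=\psi_i\circ\mathbb{E}_s$ for every $i$.

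Evaluating $\Phi$ at the orbit and passing to the limit along $\{n_{j_\ell}\}$ then gives
\[
\lim_\ell\Phi\bigl(\lambda(s^{n_{j_\ell}})a_i\lambda(s^{n_{j_\ell}})^*\bigr)_{i=1}^n
=\sum_{i=1}^n\psi_i(a_i)
=\sum_{i=1}^n\psi_i(\mathbb{E}_s(a_i))
=\Phi\bigl(\mathbb{E}_s(a_1),\ldots,\mathbb{E}_s(a_n)\bigr),
\]
contradicting the strict separation. Thus $(\mathbb{E}_s(a_1),\ldots,\mathbb{E}_s(a_n))\in\overline{K}^{\|\cdot\|}$, which finishes the proof. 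I expect the only genuinely delicate point to be the simultaneous weak-$*$ convergence in the third paragraph; it is handled cleanly because $n$ is finite, so nothing more sophisticated than iterated sequential extraction is required.
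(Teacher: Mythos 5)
Your argument is correct, but it is a genuinely different route from the one the paper takes. You linearize the problem: you pass to the finite $\ell^\infty$-direct sum $C_r^*(\Gamma)^n$ with the diagonal conjugation action, use that its dual is the $\ell^1$-sum of copies of $C_r^*(\Gamma)^*$ (so a separating functional splits as $\Phi=\sum_i\varphi_i$), and then rerun the proof of Proposition~\ref{prop:plclosure} in that product space; the only new ingredient is the simultaneous weak-$*$ extraction for the $4n$ constituent states, which is harmless precisely because $n$ is finite. Two small points that you elide are ones the paper's own write-up of Proposition~\ref{prop:plclosure} also elides, so I do not count them against you: (i) passing from a general element of the convex hull $K$ to a uniform average $\frac1m\sum_{j}$ requires approximating the weights by rationals with a common denominator and allowing repetitions among the $s_j$; (ii) the final identity $\sum_i\psi_i(\mathbb{E}_s(a_i))=\Phi\bigl(\mathbb{E}_s(a_1),\ldots,\mathbb{E}_s(a_n)\bigr)$ uses that the averaging elements lie in $\langle s\rangle$ and hence commute with each $\mathbb{E}_s(a_i)\in C_r^*(\langle s\rangle)$, so that conjugation fixes the target tuple. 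The paper instead reduces to finite linear combinations of group elements: it approximates each $a_i$ by $\sum_{t\in F_i}c^i_t\lambda(t)$, bundles everything into the single element $\tilde a=\sum_i\sum_{t\in F_i}|c^i_t|\lambda(t)$ with nonnegative coefficients, applies Proposition~\ref{prop:plclosure} once to $\tilde a$, and then disentangles the individual estimates using a monotonicity lemma from Haagerup's paper together with the inequality $\bigl\|\sum_t c_t\lambda(t)\bigr\|\le\bigl\|\sum_t|c_t|\lambda(t)\bigr\|$. Your approach buys generality and avoids the positivity tricks specific to the left regular representation (it would give simultaneous averaging in any setting where the one-element Hahn--Banach argument runs); the paper's approach buys the ability to quote the one-element statement as a black box at the cost of these representation-specific norm estimates.
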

\begin{proof} We first show that given a finite sum $T=\sum_{i=1}^nc_i\lambda(s_i)\in C_r^*(\Gamma)$,
\begin{equation}
\label{eq:absnorm}    \left\|\sum_{i=1}^nc_i\lambda(s_i)\right\|\le  \left\|\sum_{i=1}^n|c_i|\lambda(s_i)\right\|.
\end{equation}
Indeed, for any $\xi=\sum_{s\in\Gamma}\xi(s)\delta_s\in\ell^2(\Gamma)$, letting $\tilde{\xi}=\sum_{s\in\Gamma}|\xi(s)|\delta_s$ we see that
\[\left\|T\xi\right\|^2=\sum_{t\in\Gamma}\left|\sum_{i=1}^nc_i\xi(s_i^{-1}t)\right|^2\le\sum_{t\in\Gamma}\left(\sum_{i=1}^n|c_i|\left|\xi(s_i^{-1}t)\right|\right)^2=\left\|\left(\sum_{i=1}^n|c_i|\lambda(s_i)\right)\tilde{\xi}\right\|^2.\]
Therefore, since $\|\xi\|=\|\tilde{\xi}\|$, we get that
\[\left\|T\xi\right\|^2=\left\|\left(\sum_{i=1}^nc_i\lambda(s_i)\right)\xi\right\|^2\le\left\|\left(\sum_{i=1}^n|c_i|\lambda(s_i)\right)\right\|^2\|\xi\|^2. \]
Equation~\eqref{eq:absnorm} follows by taking the supremum over $\xi\in\ell^2(\Gamma)$ with norm $1$.
Let $a_1,a_2,\ldots,a_n\in C_r^*(\Gamma)$ and $\epsilon>0$. We can find a finite set $F_i\subset\Gamma$ such that
\begin{equation}
\label{eq:approxsum}
a_i\approx_{\epsilon}\sum_{t\in F_i}c_t^i\lambda(t),c_j^i\in\mathbb{C},~i=1,2,\ldots,n.
\end{equation}
Consider $\tilde{a}=\sum_{i=1}^n\sum_{t\in F_i}\left|c_t^i\right|\lambda(t)$. Applying Proposition~\ref{prop:plclosure} to $\Tilde{a}$, we can find a subset $\{s_1,s_2,\ldots,s_m\}\subset\{ s^{n_j}:j\in\mathbb{N} \}$ such that
\begin{equation}
\left\|\frac{1}{m}\sum_{j=1}^m\lambda(s_j)\left(\tilde{a}-\mathbb{E}_{s}(\tilde{a})\right)\lambda(s_j)^*\right\|<\frac{\epsilon}{3}.
\end{equation}
This is equivalent to saying that
\[\left\|\sum_{i=1}^n\frac{1}{m}\sum_{j=1}^m\lambda(s_j)\left(\sum_{t\in F_i}|c_t^i|\lambda(t)-\mathbb{E}_{s}\left(\sum_{t\in F_i}|c_t^i|\lambda(t))\right)\right)\lambda(s_j)^*\right\|<\frac{\epsilon}{3}.\]
Using \cite[Lemma~4.1]{haagerup2016new}, we see that
\[\left\|\frac{1}{m}\sum_{j=1}^m\lambda(s_j)\left(\sum_{t\in F_i}|c_t^i|\lambda(t)-\mathbb{E}_{s}\left(\sum_{t\in F_i}|c_t^i|\lambda(t))\right)\right)\lambda(s_j)^*\right\|<\frac{\epsilon}{3},~i=1,2,\ldots,n.\]
Using Equation~\ref{eq:absnorm}, we get that
\begin{equation}
\label{eq:modtonormal}
    \left\|\frac{1}{m}\sum_{j=1}^m\lambda(s_j)\left(\sum_{t\in F_i}c_t^i\lambda(t)-\mathbb{E}_{s}\left(\sum_{t\in F_i}c_t^i\lambda(t))\right)\right)\lambda(s_j)^*\right\|<\frac{\epsilon}{3},~i=1,2,\ldots,n.
\end{equation}
As usual, the triangle inequality gives us the following. For each $i=1,2,\ldots,n$, we get that
\begin{align*}
&\left\|\frac{1}{m}\sum_{j=1}^m\lambda(s_j)\left(a_i-\mathbb{E}_{s}\left(a_i)\right)\right)\lambda(s_j)^*\right\|\\&\le \left\|\frac{1}{m}\sum_{j=1}^m\lambda(s_j)\left(a_i-\sum_{t\in F_i}c_t^i\lambda(t)\right)\lambda(s_j)^*\right\|\\&+\left\|\frac{1}{m}\sum_{j=1}^m\lambda(s_j)\left(\sum_{t\in F_i}c_t^i\lambda(t)-\mathbb{E}_{s}\left(\sum_{t\in F_i}c_t^i\lambda(t))\right)\right)\lambda(s_j)^*\right\|\\&+\left\|\frac{1}{m}\sum_{j=1}^m\lambda(s_j)\left(\mathbb{E}_{s}\left(\sum_{t\in F_i}c_t^i\lambda(t))-a_i\right)\right)\lambda(s_j)^*\right\|\\&\le\left\|a_i-\sum_{t\in F_i}c_t^i\lambda(t)\right\|+ \left\|\frac{1}{m}\sum_{j=1}^m\lambda(s_j)\left(\sum_{t\in F_i}c_t^i\lambda(t)-\mathbb{E}_{s}\left(\sum_{t\in F_i}c_t^i\lambda(t))\right)\right)\lambda(s_j)^*\right\|\\&+\left\|\mathbb{E}_{s}\left(\sum_{t\in F_i}c_t^i\lambda(t)-a_i\right)\right\|\\&\stackrel{\eqref{eq:approxsum}}{\le}\frac{\epsilon}{3}\stackrel{\eqref{eq:modtonormal}}{+}\frac{\epsilon}{3}\stackrel{\eqref{eq:approxsum}}{+}\frac{\epsilon}{3}\\&=\epsilon.
\end{align*}
\end{proof}
\section{Invariant subalgebras of torsion-free Hyperbolic groups}
Let $\Gamma$ be a torsion-free non-amenable hyperbolic group, and $\partial\Gamma$, the associated Gromov boundary. It is well-known that for every non-identity element $s\in\Gamma$, there are exactly two fixed points $x_s^+, x_s^-\in\partial\Gamma$. Moreover, $s^nx\xrightarrow[]{n\to\infty} x_s^+$ for all $x\ne x_s^-\in \partial\Gamma$. Therefore, using dominated convergence theorem, for every $\nu\in \text{Prob}(\partial\Gamma)$, we see that $s^{n}\nu\xrightarrow{\text{weak}^*}a\delta_{x_s^+}+(1-a)\delta_{x_s^-}$. We note that in this case, $a=\nu(X\setminus x_s^-)$.

\subsection{Selective Averaging} Usually, an \say{averaging principle} kills off all non-identity elements. But this does not serve our purpose. As such, we need to average by powers of a given element and kill off all other elements. The following lemma allows us to do so.
\begin{lemma}\label{lem: setwise stablizer of primitive elements}
    Let $\Gamma$ be a torsion-free non-amenable hyperbolic group, and $\partial\Gamma$, the associated Gromov boundary. Let $s$ be a (non-trivial) primitive element in $\Gamma$ in the sense that if $s=t^n$ for some $t\in\Gamma$ and $n\in \mathbb{Z}$, then $n=\pm 1$. Let $\{x_s^+,x_s^-\}$ be the corresponding fixed points on the boundary. Then, $h\{x_s^+,x_s^-\}\cap \{x_s^+,x_s^-\}=\emptyset$ for all $h\not\in\langle s\rangle$.
\end{lemma}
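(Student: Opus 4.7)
My plan is to exploit the standard theory of elementary (virtually cyclic) subgroups in word-hyperbolic groups. For an infinite order element $s \in \Gamma$, set
\[
E(s) := \{g \in \Gamma : g\{x_s^+, x_s^-\} = \{x_s^+, x_s^-\}\},
\]
the setwise stabilizer in $\Gamma$ of the pair of boundary fixed points. This is a classical object: $E(s)$ is the unique maximal virtually cyclic subgroup of $\Gamma$ containing $s$. The lemma will follow once I establish (i) $E(s) = \langle s \rangle$ under the given hypotheses, and (ii) any $h$ with $h\{x_s^+, x_s^-\} \cap \{x_s^+, x_s^-\} \neq \emptyset$ must already lie in $E(s)$.

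For (i), since $\Gamma$ is torsion-free, every virtually cyclic subgroup of $\Gamma$ is infinite cyclic (any nontrivial finite subgroup would violate torsion-freeness). Write $E(s) = \langle t \rangle$. Then $s \in E(s)$ forces $s = t^n$ for some $n \in \mathbb{Z}$, and primitivity of $s$ gives $n = \pm 1$, so $\langle s \rangle = \langle t \rangle = E(s)$.

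For (ii), assume $h\{x_s^+, x_s^-\} \cap \{x_s^+, x_s^-\} \neq \emptyset$. If $h$ has finite order, then $h = e \in \langle s \rangle$ by torsion-freeness and we are done. Otherwise $h$ is loxodromic with its own fixed-point pair $\{x_h^+, x_h^-\} \subset \partial \Gamma$, and the hypothesis supplies a common point of $\{x_s^+, x_s^-\}$ and $\{x_h^+, x_h^-\}$. At this point I would invoke the fundamental fact, standard for hyperbolic groups (and more generally for discrete convergence actions on the boundary), that two loxodromic elements sharing a single boundary fixed point must share both. Consequently $h$ preserves $\{x_s^+, x_s^-\}$ setwise, so $h \in E(s) = \langle s \rangle$, contradicting $h \notin \langle s \rangle$.

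The main obstacle --- and really the only nontrivial input --- is the black box in step (ii) that two loxodromic elements sharing one boundary fixed point share both. In the literature this is usually packaged either as the virtual cyclicity of the stabilizer of a point in $\partial \Gamma$, as the structural description of elementary subgroups going back to Gromov, or via Bowditch's convergence-group formalism. Once cited, the remainder of the argument is purely algebraic: torsion-freeness collapses virtually cyclic to cyclic, and primitivity of $s$ removes any residual freedom in $E(s)$.
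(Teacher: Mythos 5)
Your overall strategy coincides with the paper's: the paper likewise reduces the lemma to identifying $\langle s\rangle$ with the setwise stabilizer $E(s)$ of $\{x_s^+,x_s^-\}$ (equivalently, with the maximal elementary/amenable subgroup containing $s$, which is infinite cyclic by torsion-freeness and equals $\langle s\rangle$ by primitivity), and then quotes the relevant convergence-dynamics facts from parts (i) and (iii) of Lemma~2.2 of Boutonnet--Carderi. Your step (i) is correct and is exactly the paper's argument.

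Step (ii), however, contains a real error. From $h\{x_s^+,x_s^-\}\cap\{x_s^+,x_s^-\}\neq\emptyset$ you infer that $\{x_s^+,x_s^-\}$ and $\{x_h^+,x_h^-\}$ share a point. That does not follow: the hypothesis only says that $h$ carries \emph{some} pole of $s$ to \emph{some} pole of $s$, not that $h$ \emph{fixes} a pole of $s$. A priori $h$ could send $x_s^+$ to $x_s^-$ while sending $x_s^-$ off the pair entirely, in which case $h$ fixes neither pole and your black box (two loxodromics sharing a fixed point share both) has nothing to act on. The repair is to apply that black box to the pair $hsh^{-1}$ and $s$ rather than to $h$ and $s$: since $hx_s^{\pm}=x_{hsh^{-1}}^{\pm}$, the hypothesis says precisely that the loxodromic elements $hsh^{-1}$ and $s$ share a boundary fixed point, hence share both, hence
\[
h\{x_s^+,x_s^-\}=\{x_{hsh^{-1}}^+,x_{hsh^{-1}}^-\}=\{x_s^+,x_s^-\},
\]
so $h\in E(s)=\langle s\rangle$, a contradiction. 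With this substitution your argument closes and matches, in substance, the statement the paper imports from the cited lemma.
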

\begin{proof}
    This follows from some facts used in proving \cite[Lemma 2.2]{BC_gafa}. Indeed, let $\Lambda$ be a maximal amenable subgroup containing $s$. As mentioned in this proof, $\Lambda$ is virtually cyclic. Since $\Gamma$ is assumed to be torsion-free, this implies $\Lambda$ is actually infinite cyclic, i.e. $s\in \Lambda\cong \mathbb{Z}=\langle g\rangle$. So $s=g^n$ for some $n\neq 0$. Since $s$ is primitive, we get that $n=\pm 1$, in other words, $\Lambda=\langle s\rangle$. Then checking the proof of part (i) in \cite[Lemma 2.2]{BC_gafa}, we found that $\langle s\rangle=\Lambda=Stab_{\Gamma}(\{x_s^+, x_s^-\})$ and by part (iii) in \cite[Lemma 2.2]{BC_gafa}, we get that $h\{x_s^+, x_s^-\}\cap \{x_s^+, x_s^-\}=\emptyset$ for all $h\not\in \langle s\rangle$.
\end{proof}

\begin{remark}
\label{rem:maximalamenable}
Note that the above proof shows that for every non-trivial element $h$ in a torsion-free non-amenable hyperbolic group $\Gamma$, we may write $h=s^n$ for some primitive element $s\in \Gamma$ and some integer $n$. Indeed, let $\Lambda$ be a maximal amenable subgroup containing $h$. Then $h\in \Lambda\cong \mathbb{Z}=\langle s\rangle$ for some $s\in\Gamma$. Note that $s$ is primitive since otherwise, we may write $s=g^k$ for some $k\in\mathbb{Z}\setminus \{\pm 1\}$ and thus $\Lambda=\langle s\rangle \subsetneq \langle g\rangle$, which  contradicts to the maximal amenability of $\Lambda$.
\end{remark}
Putting all of these together, we obtain the following easy corollary. This allows us to play ping-pong to find the group elements in algebra. Recall that we denote by $\mathbb{E}_{g}$ the canonical conditional expectation from $C_r^*(\Gamma)$ to $C_r^*(\langle g \rangle)$.

\begin{prop}
\label{prop:imageofanelementunderconditionalexpectation}
Let $\Gamma$ be a torsion-free non-amenable hyperbolic group. Let $\mathcal{A}\le C_r^*(\Gamma)$ be a $\Gamma$-invariant C$^*$-subalgebra. Then, $\E_{g}(\mathcal{A})\subset \mathcal{A}$ for every (non-trivial) primitive element $g\in\Gamma$.
\begin{proof}
Let $\mathcal{A}\le C_r^*(\Gamma)$ be an invariant $C^*$-subalgebra and $a\in\mathcal{A}$. Let $e\neq t\in\Gamma$ be a primitive element. Let $\epsilon>0$. Since $t$ is primitive, we know that $\text{Stab}_{\Gamma}\{x_t^+, x_{t}^{-}\}=\langle t \rangle$ by Lemma \ref{lem: setwise stablizer of primitive elements}.
Using Proposition~\ref{prop:plclosure}, we can find $t_1, t_2, \ldots,t_m\in \Lambda=\{t^{k}: k\ge 0\}$ such that
\begin{equation}
\label{eq:fromaveraging}
\left\|\frac{1}{m}\sum_{j=1}^m\lambda(t_j)a\lambda(t_j)^*-\mathbb{E}_{\langle t\rangle}\left(a\right)\right\|<\epsilon.
\end{equation}
Since $\epsilon>0$ is arbitrary, the claim follows.
\end{proof}
\end{prop}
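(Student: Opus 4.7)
The plan is to reduce this directly to the selective averaging principle established in Proposition~\ref{prop:plclosure}, combined with the stabilizer computation in Lemma~\ref{lem: setwise stablizer of primitive elements}. Fix a $\Gamma$-invariant $C^*$-subalgebra $\mathcal{A}\le C_r^*(\Gamma)$, an element $a\in\mathcal{A}$, and a non-trivial primitive element $g\in\Gamma$. The goal is to exhibit $\mathbb{E}_g(a)$ as a norm-limit of elements belonging to $\mathcal{A}$, which will suffice since $\mathcal{A}$ is norm-closed.

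First I would verify that the hypotheses of Proposition~\ref{prop:plclosure} apply to the action $\Gamma\curvearrowright\partial\Gamma$ on the Gromov boundary. For any infinite order element $s\in\Gamma$ in a non-amenable hyperbolic group, the action on $\partial\Gamma$ has north pole-south pole dynamics with fixed points $x_s^+, x_s^-$, which is standard. The additional assumption needed is that $t\{x_g^+,x_g^-\}\cap\{x_g^+,x_g^-\}=\emptyset$ whenever $t\notin \langle g\rangle$, and this is exactly the content of Lemma~\ref{lem: setwise stablizer of primitive elements}, applied to the primitive element $g$.

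Next, given $\epsilon>0$, apply Proposition~\ref{prop:plclosure} with $s=g$ and with the full subsequence $n_j=j$ (or any subsequence of positive powers) to produce elements $g_1,\ldots,g_m\in\{g^{n_j}:j\in\mathbb{N}\}\subset\langle g\rangle$ satisfying
\[
\left\|\frac{1}{m}\sum_{j=1}^m \lambda(g_j)\,a\,\lambda(g_j)^*-\mathbb{E}_g(a)\right\|<\epsilon.
\]
Because $\mathcal{A}$ is $\Gamma$-invariant, each conjugate $\lambda(g_j)a\lambda(g_j)^*$ lies in $\mathcal{A}$, so the convex combination on the left is an element of $\mathcal{A}$. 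Letting $\epsilon\to 0$ and invoking closure of $\mathcal{A}$ yields $\mathbb{E}_g(a)\in\mathcal{A}$.

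The main obstacle is not in the present proposition itself but was already absorbed in the preparatory results: one needs a version of Powers-type averaging that \emph{only} kills off elements outside $\langle g\rangle$ (rather than averaging everything down to scalars). This is precisely what Proposition~\ref{prop:plclosure} supplies, and the reason it succeeds is the primitivity hypothesis on $g$, which via Lemma~\ref{lem: setwise stablizer of primitive elements} pins down the setwise stabilizer of the fixed-point pair as exactly $\langle g\rangle$. Without primitivity one would only obtain the image under $\mathbb{E}_H$ for $H$ the maximal cyclic subgroup containing $g$, so restricting to primitive elements is essential for the statement as written; Remark~\ref{rem:maximalamenable} assures this is no loss of generality for applications, since every non-trivial element is a power of a primitive one.
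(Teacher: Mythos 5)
Your proof is correct and follows essentially the same route as the paper: verify the hypotheses of Proposition~\ref{prop:plclosure} for the boundary action using Lemma~\ref{lem: setwise stablizer of primitive elements}, then average over powers of $g$ and use $\Gamma$-invariance plus norm-closedness of $\mathcal{A}$. Your added remarks on why primitivity is essential (and why it costs nothing by Remark~\ref{rem:maximalamenable}) match the paper's intent exactly.
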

\subsection{Invariant Subalgebras to Normal Subgroups}
We now establish examples of $\Gamma$ such that every invariant $C^*$-subalgebra $\mathcal{A}\le C_r^*(\Gamma)$ comes from a subgroup. Our strategy is a modification of the argument made in \cite[Proposition~2.2]{amrutam2023invariant}. The primary difference is that the invariant $C^*$-subalgebra is, a priori, not necessarily the image of a conditional expectation. We work around this obstruction by using Proposition~\ref{prop:imageofanelementunderconditionalexpectation}.
\begin{theorem}
\label{thm:invfromsubgroup}
Let $\Gamma$ be a torsion-free non-amenable hyperbolic group. Let $\mathcal{A}\le C_r^*(\Gamma)$ be a $\Gamma$-invariant C$^*$-subalgebra. Then, $\mathcal{A}=C_r^*(N)$ for some normal subgroup $N\triangleleft \Gamma$.
\begin{proof}
Let $\mathcal{A}$ be a non-trivial invariant $C^*$-subalgebra of $C_r^*(\Gamma)$.
Let $a\in\mathcal{A}$ be a non-zero element. We aim to show that $\lambda(g)\in\mathcal{A}$ whenever $\tau_0(a\lambda(g)^*)\ne 0$. Indeed, if this is proved, set $N=\{g\in \Gamma:\lambda(g)\in \mathcal{A}\}$. Clearly, $N$ is a normal subgroup in $\Gamma$ and $C^*_r(N)\subseteq \mathcal{A}$. Moreover, since $\text{Supp}(a)\subset N$ for all $a\in\mathcal{A}$, using Proposition~\ref{prop:supoprtofsubalg}, we deduce that $\mathcal{A}\subseteq C^*_r(N)$.

Let $g\in\Gamma$ be such that $\tau_0(a\lambda(g)^*)\ne 0$. Without loss of generality, we assume $\tau_0(a)=0$. Write $g=t^n$ for some primitive element $t\in \Gamma$ and $0\neq n\in\mathbb{Z}$. Let $h\in \Gamma$ be such that $t$ is free from $h$, i.e., $\langle t, h\rangle=\langle t\rangle\star\langle h\rangle\cong \mathbb{F}_2$. We will write $g$ instead of $\lambda(g)$ for ease of notation.

Fix any $0<\epsilon<1$.
Using Proposition~\ref{prop:imageofanelementunderconditionalexpectation}, we know $\mathbb{E}_t(a) \in \mathcal{A}$. We can find a finite subset $F \subset \mathbb{Z}$ containing $n$ and coefficients $c_{t^k}$ such that:
\begin{equation}
\label{eq:oneprimit_eps}
    \left\| \mathbb{E}_{t}(a) - \sum_{k \in F} c_{t^k} t^k \right\| < \frac{\epsilon}{2(1+\|a\|)}.
\end{equation}
Note that $c_{t^n} = \tau_0(a(t^n)^*) \neq 0$.

Since $\mathcal{A}$ is $\Gamma$-invariant, $b := \lambda(h)a\lambda(h)^*\in\mathcal{A}$. Furthermore, by Proposition~\ref{prop:imageofanelementunderconditionalexpectation}, $\mathbb{E}_{hth^{-1}}(b) \in \mathcal{A}$. Observe that:
\begin{align*}
\mathbb{E}_{hth^{-1}}(b) &= \mathbb{E}_{hth^{-1}}(h \mathbb{E}_t(a) h^{-1}) + \mathbb{E}_{hth^{-1}}(h (a-\mathbb{E}_t(a)) h^{-1}) \\
&= \mathbb{E}_{hth^{-1}}(h \mathbb{E}_t(a) h^{-1}) \\
&\approx_{\frac{\epsilon}{2(1+\|a\|)}} \mathbb{E}_{hth^{-1}}\left( h \left( \sum_{k \in F} c_{t^k} t^k \right) h^{-1} \right) \\
&= \sum_{k \in F} c_{t^k} h t^k h^{-1}.
\end{align*}
Thus, we have:
\begin{equation}
\label{eq:freeprimit_eps}
    \left\| \mathbb{E}_{hth^{-1}}(b) - \sum_{k \in F} c_{t^k} h t^k h^{-1} \right\| < \frac{\epsilon}{2(1+\|a\|)}.
\end{equation}

Multiplying the element in \eqref{eq:oneprimit_eps} by the element in \eqref{eq:freeprimit_eps}, and letting $y := \mathbb{E}_{t}(a) \mathbb{E}_{hth^{-1}}(b) \in \mathcal{A}$, we obtain:
\begin{equation}
    \left\| y - \sum_{k, m \in F} c_{t^k} c_{t^m} t^k h t^m h^{-1} \right\| < \epsilon.
\end{equation}

Write $t^n h t^n h^{-1} = u^{\ell}$ for some primitive element $u \in \Gamma$ and some integer $\ell \neq 0$. Using Proposition~\ref{prop:imageofanelementunderconditionalexpectation}, $\mathbb{E}_u(y) \in \mathcal{A}$. We now apply $\mathbb{E}_u$ to the approximation. Using Lemma~\ref{lem: trick to handle primitivity}, for $k, m \in F$, the element $t^k h t^m h^{-1}$ commutes with $u^\ell = t^n h t^n h^{-1}$ if and only if $k=m=n$ (since $F$ is a subset of integers and we may assume $0 \notin F$ as $\tau_0(a)=0$).
Therefore:
\begin{equation}
    \left\| \mathbb{E}_u(y) - c_{t^n}^2 t^n h t^n h^{-1} \right\| < \epsilon.
\end{equation}
Since $\epsilon > 0$ is arbitrary and independent of the group elements, and $c_{t^n} \neq 0$, we conclude that $t^n h t^n h^{-1} \in \mathcal{A}$.

Now, replacing $h$ by $h^2$ (noting $\langle t, h^2 \rangle \cong \mathbb{F}_2$), the same argument yields $t^n h^2 t^n h^{-2} \in \mathcal{A}$. Consequently:
\[ h(h t^{-n} h^{-1} t^n)h^{-1} = (t^n h^2 t^n h^{-2})^{-1} (t^n h t^n h^{-1}) \in \mathcal{A}. \]
Thus $h t^{-n} h^{-1} t^n \in \mathcal{A}$, which implies $t^{2n} = (t^n h t^n h^{-1})(h t^{-n} h^{-1} t^n) \in \mathcal{A}$.

Consider $z := \mathbb{E}_t(a) (h t^{2n} h^{-1}) \in \mathcal{A}$. Using the approximation for $\mathbb{E}_t(a)$:
\[ \left\| z - \sum_{k \in F} c_{t^k} t^k h t^{2n} h^{-1} \right\| < \epsilon. \]
Let $v$ be a primitive element such that $v^{\ell'} = t^n h t^{2n} h^{-1}$ for some non-zero integer $\ell'$. Applying $\mathbb{E}_v$ and using Lemma~\ref{lem: trick to handle primitivity} again (noting that $t^k h t^{2n} h^{-1}$ commutes with $t^n h t^{2n} h^{-1}$ only when $k=n$), we get:
\[ \left\| \mathbb{E}_v(z) - c_{t^n} t^n h t^{2n} h^{-1} \right\| < \epsilon. \]
Thus $t^n h t^{2n} h^{-1} \in \mathcal{A}$ since $\mathbb{E}_v(z)\in \mathcal{A}$ and $0<\epsilon<1$ is arbitrary. Finally:
\[ t^n h t^{2n} h^{-1} (t^n h t^n h^{-1})^{-1} = t^n h t^n h^{-1} \in \mathcal{A} \implies (t^n h) t^n (t^n h)^{-1} \in \mathcal{A}. \]
By invariance, $g = t^n \in \mathcal{A}$. The proof is complete.
\end{proof}
\end{theorem}
\begin{remark} This shows that the reduced group $C^*$-algebras of the torsion-free non-amenable hyperbolic groups have no non-trivial invariant Cartan subalgebras. But this follows in general for all groups with trivial amenable radical from \cite{amrutam2025amenable}. Indeed, say $\mathcal{A}\le C^*_r(\Gamma)$ is a $\Gamma$-invariant nuclear $C^*$-subalgebra, where $\Gamma$ has trivial amenable radical. Then $\overline{\mathcal{A}}^{\text{WOT}}:=\mathcal{M}$ is a $\Gamma$-invariant amenable von Neumann algebra of $L(\Gamma)$. However, using \cite[Theorem~A]{amrutam2025amenable}, we see that $\mathcal{M}=\mathbb{C}$. Hence, $\mathcal{A}=\mathbb{C}$.
\end{remark}
\subsection{On torsion-free acylindrically hyperbolic groups}\label{sub:torsionacylin}An action of a group $\Gamma$ on a metrizable space $(X, d)$ is called \say{acylindrical} if, for any $\epsilon > 0$, there exist positive constants $\delta$ and $N$ such that for any points $x, y \in X$ with $d(x, y) \geq \delta$, the number of elements $g \in \Gamma$ satisfying both $d(x, gx) \leq \epsilon$ and $d(y, gy) \leq \epsilon$ is at most $N$. A group $\Gamma$ is termed as \say{acylindrically hyperbolic} if it admits a non-elementary acylindrical action on a hyperbolic space.

All non-elementary hyperbolic groups are examples of acylindrically hyperbolic groups. Additional instances of acylindrically hyperbolic groups include non-(virtually) cyclic groups that are hyperbolic relative to proper subgroups, $\text{Out}(F_n)$ for $n > 1$, many mapping class groups, as well as non-(virtually cyclic) groups that act properly on proper CAT(0) spaces and include rank one elements. For further examples and details, see \cite[Section~8]{osin2016acylindrically} and its references.

For a group $\Gamma$ acting on a hyperbolic space $S$, an element $g \in \Gamma$ is classified as loxodromic if it has exactly two fixed points, denoted $x_g^+$ and $x_g^-$, on the Gromov boundary $\partial S$, and if for every point $x \in S$, $g^n x$ converges to $x_g^+$, except when $x$ is equal to $x_g^-$. Moreover, there exists a unique maximal virtually cyclic subgroup $E(g) \le \Gamma$ that contains $g$. More specifically, $E(g) = \text{Stab}_{\Gamma}(\{x_g^+, x_g^-\})$, which represents the set wise stabilizer of the points $\{x_g^+, x_g^-\}$ (refer to \cite[Lemma~6.5]{dahmani2017hyperbolically}).

Given a primitive loxodromic element $g$ in a torsion-free acylindrically hyperbolic group with trivial amenable radical, it is well-known that $E(g)=\langle g \rangle$. For every loxodromic element $g\in\Gamma$, there exists a primitive element $g_0\in\Gamma$ and $m\in\mathbb{Z}\setminus\{0\}$ such that $g=g_0^m$ (the same argument in Remark~\ref{rem:maximalamenable} goes through). Arguing similarly as in Theorem~\ref{thm:invfromsubgroup}, we can conclude that $\lambda(g)\in\mathcal{A}$ if there exists $a\in\mathcal{A}$ with $\tau_0(a\lambda(g)^{*})\ne 0$. Therefore, we obtain that every torsion-free acylindrically hyperbolic group satisfies the $C^*$-ISR property.

\section{Invariant subalgebras of a finite product of torsion-free hyperbolic groups}
In this section, we show that a finite product of torsion-free non-amenable hyperbolic groups has the $C^*$-ISR-property. We first begin with a straightforward modification of the averaging principle.
\subsection{Invariance under coordinate projections}
Let us fix some notation first. Given $g\in\Gamma=\Gamma_1\times\Gamma_2\times\cdots\times\Gamma_n$, let us write $g=(s_1,s_2,\ldots,s_n)$. We shall identify $C_r^*(\Gamma)$ with $C_r^*(\Gamma_1)\otimes_{\text{min}}C_r^*(\Gamma_2)\otimes_{\text{min}}\cdots\otimes_{\text{min}}C_r^*(\Gamma_n)$. By an abuse of notation, $\mathbb{E}_{s_1}\otimes\mathbb{E}_{s_2}\otimes\cdots\otimes\mathbb{E}_{s_n}$ will denote the canonical conditional expectation
\begin{align*}\mathbb{E}_{s_1}\otimes\mathbb{E}_{s_2}\otimes\cdots\otimes\mathbb{E}_{s_n}&:C_r^*(\Gamma_1)\otimes_{\text{min}}C_r^*(\Gamma_2)\otimes_{\text{min}}\cdots\otimes_{\text{min}}C_r^*(\Gamma_n)\\&\xrightarrow{}C_r^*(\langle s_1\rangle)\otimes_{\text{min}}C_r^*(\langle s_2\rangle)\otimes_{\text{min}}\cdots\otimes_{\text{min}}C_r^*(\langle s_n\rangle).\end{align*}
Moreover, given a unital completely positive map $\psi_1:C_r^*(\Gamma_1)\to C_r^*(\Gamma_1)$, we extend it to a unital completely positive map
$$\psi_1\otimes\text{id}:C_r^*(\Gamma_1)\otimes_{\text{min}}C_r^*(\Gamma_2)\to C_r^*(\Gamma_1)\otimes_{\text{min}}C_r^*(\Gamma_2).$$
We do it analogously for the other coordinate. We show below that $\mathcal{A}$ is invariant under the coordinate projections $\mathbb{E}_{s_1}\otimes\mathbb{E}_{s_2}\otimes\cdots\otimes\mathbb{E}_{s_n}$ whenever each $s_i$ is a primitive element.
\begin{prop}
\label{prop:modificationaveraging}
Let $\Gamma=\Gamma_1\times\Gamma_2\times\cdots\times\Gamma_n$ be a finite product of groups where each $\Gamma_i$ is a torsion-free non-amenable hyperbolic group. Let $\mathcal{A}\le C_r^*(\Gamma)$ be a $\Gamma$-invariant C$^*$-subalgebra. Let $g=(s_1,s_2,\ldots,s_n)\in\Gamma$ be such that each $s_i$ is a primitive element. Then,  $$\mathbb{E}_{s_1}\otimes\mathbb{E}_{s_2}\otimes\cdots\otimes\mathbb{E}_{s_n}(\mathcal{A})\subset\mathcal{A}.$$
\begin{proof}
We prove it for $n=2$, then an induction argument gives the desired result for any finite product.
Let $\mathcal{A}\le C_r^*(\Gamma)$ be a $\Gamma$-invariant $C^*$-subalgebra, and $g=(s_1,s_2)$ be such that each $s_i$ a primitive element.
Let $a\in\mathcal{A}$ and $\epsilon>0$ be given. We identity $C_r^*(\Gamma)$ with $C_r^*(\Gamma_1)\otimes_{\text{min}}C_r^*(\Gamma_2)$. We can find $a_1,a_2,\ldots,a_k\in C_r^*(\Gamma_1)$ and $b_1,b_2,\ldots,b_k\in C_r^*(\Gamma_2)$ such that
\begin{equation}
\label{eq:firstapproximation}
 \left\|a-\sum_{i=1}^ka_i\otimes b_i\right\|<\frac{\epsilon}{4}
\end{equation}
Let $\mathbb{M}=\max_{1\le i\le k}\{\|a_i\|,\|b_i\|\}$. Using Theorem~\ref{thm:finitelymanyaveraging}, we can find $t_1, t_2, \ldots,t_m\in\langle s_1\rangle$ such that
$$\left\|\frac{1}{m}\sum_{j=1}^m\lambda(t_j)a_i\lambda(t_j)^*-\mathbb{E}_{s_1}\left(a_i\right)\right\|<\frac{\epsilon}{4k\mathbb{M}},~i=1,2,\ldots,k.$$
Letting $\psi_1(\cdot)=\frac{1}{m}\sum_{j=1}^m\lambda(t_j)(\cdot)\lambda(t_j)^*$, the above inequality can be rewritten as
\begin{equation}
\label{eq:firstcoordinate}
\left\|\psi_1(a_i)-\mathbb{E}_{s_1}\left(a_i\right)\right\|<\frac{\epsilon}{4k\mathbb{M}},~i=1,2,\ldots,k.    \end{equation}
Another application of Theorem~\ref{thm:finitelymanyaveraging} and we can find $u_1, u_2, \ldots,u_n\in\langle s_2\rangle$ such that
$$\left\|\frac{1}{n}\sum_{l=1}^n\lambda(u_l)b_i\lambda(u_l)^*-\mathbb{E}_{s_2}\left(b_i\right)\right\|<\frac{\epsilon}{4k\mathbb{M}},~i=1,2,\ldots,k.$$
Letting $\psi_2(\cdot)=\frac{1}{n}\sum_{l=1}^n\lambda(u_l)(\cdot)\lambda(u_l)^*$, we see that
\begin{equation}
\label{eq:secondcoordinate}
\left\|\psi_2(b_i)-\mathbb{E}_{s_2}\left(b_i\right)\right\|<\frac{\epsilon}{4k\mathbb{M}},~i=1,2,\ldots,k.   \end{equation}
From equation~\eqref{eq:firstapproximation}, we get that
\begin{equation}
\label{eq:secondapprox}
\left\|\psi_1\otimes\psi_2(a)-\sum_{i=1}^k\psi_1(a_i)\otimes \psi_2(b_i)\right\|<\frac{\epsilon}{4}.
\end{equation}
Also, applying $\mathbb{E}_{s_1}\otimes\mathbb{E}_{s_2}$ to equation~\eqref{eq:firstapproximation}, we get that
\begin{equation}
\label{eq:thirdapprox}
\left\|\mathbb{E}_{s_1}\otimes\mathbb{E}_{s_2}(a)-\sum_{i=1}^k\mathbb{E}_{s_1}(a_i)\otimes \mathbb{E}_{s_2}(b_i)\right\|<\frac{\epsilon}{4}.
\end{equation}
The triangle inequality gives us the following estimate.
\begin{align*}
&\left\|\psi_1\otimes\psi_2(a)-\mathbb{E}_{s_1}\otimes\mathbb{E}_{s_2}(a)\right\|\\&\le  \left\|\psi_1\otimes\psi_2(a)-\sum_{i=1}^k\psi_1(a_i)\otimes \psi_2(b_i)\right\|+\left\|\sum_{i=1}^k\psi_1(a_i)\otimes \psi_2(b_i)-\sum_{i=1}^k\mathbb{E}_{s_1}(a_i)\otimes \psi_2(b_i)\right\|\\&+\left\|\sum_{i=1}^k\mathbb{E}_{s_1}(a_i)\otimes \psi_2(b_i)-\sum_{i=1}^k\mathbb{E}_{s_1}(a_i)\otimes \mathbb{E}_{s_2}(b_i)\right\|+\left\|\sum_{i=1}^k\mathbb{E}_{s_1}(a_i)\otimes \mathbb{E}_{s_2}(b_i)-\mathbb{E}_{s_1}\otimes\mathbb{E}_{s_2}(a)\right\|\\&\le  \left\|\psi_1\otimes\psi_2(a)-\sum_{i=1}^k\psi_1(a_i)\otimes \psi_2(b_i)\right\|+\left\|\sum_{i=1}^k\left(\psi_1(a_i)-\mathbb{E}_{s_1}(a_i)\right)\otimes \psi_2(b_i)\right\|\\&+\left\|\sum_{i=1}^k\mathbb{E}_{s_1}(a_i)\otimes \left(\psi_2(b_i)-\mathbb{E}_{s_2}(b_i)\right)\right\|+\left\|\sum_{i=1}^k\mathbb{E}_{s_1}(a_i)\otimes \mathbb{E}_{s_2}(b_i)-\mathbb{E}_{s_1}\otimes\mathbb{E}_{s_2}(a)\right\|\\&\le \left\|\psi_1\otimes\psi_2(a)-\sum_{i=1}^k\psi_1(a_i)\otimes \psi_2(b_i)\right\|+\sum_{i=1}^k\left\|\left(\psi_1(a_i)-\mathbb{E}_{s_1}(a_i)\right)\otimes \psi_2(b_i)\right\|\\&+\sum_{i=1}^k\left\|\mathbb{E}_{s_1}(a_i)\otimes \left(\psi_2(b_i)-\mathbb{E}_{s_2}(b_i)\right)\right\|+\left\|\sum_{i=1}^k\mathbb{E}_{s_1}(a_i)\otimes \mathbb{E}_{s_2}(b_i)-\mathbb{E}_{s_1}\otimes\mathbb{E}_{s_2}(a)\right\|\\&\le \left\|\psi_1\otimes\psi_2(a)-\sum_{i=1}^k\psi_1(a_i)\otimes \psi_2(b_i)\right\|+\sum_{i=1}^k\left\|\left(\psi_1(a_i)-\mathbb{E}_{s_1}(a_i)\right)\right\|\left\| \psi_2(b_i)\right\|\\&+\sum_{i=1}^k\left\|\mathbb{E}_{s_1}(a_i)\right\|\left\|\left(\psi_2(b_i)-\mathbb{E}_{s_2}(b_i)\right)\right\|+\left\|\sum_{i=1}^k\mathbb{E}_{s_1}(a_i)\otimes \mathbb{E}_{s_2}(b_i)-\mathbb{E}_{s_1}\otimes\mathbb{E}_{s_2}(a)\right\|\\&\stackrel{\eqref{eq:secondapprox}}{\le}\frac{\epsilon}{4}\stackrel{\eqref{eq:firstcoordinate}}{+}\sum_{i=1}^k\frac{\epsilon}{4\mathbb{M}k}\left\| \psi_2(b_i)\right\|\stackrel{\eqref{eq:secondcoordinate}}{+}\sum_{i=1}^k\left\|\mathbb{E}_{s_1}(a_i)\right\|\frac{\epsilon}{4\mathbb{M}k}\stackrel{\eqref{eq:thirdapprox}}{+}\frac{\epsilon}{4}\\&\le \frac{\epsilon}{4}+\sum_{i=1}^k\frac{\epsilon}{4k}+\sum_{i=1}^k\frac{\epsilon}{4k}+\frac{\epsilon}{4}\\&=\epsilon.
\end{align*}
Since $\epsilon>0$ is arbitrary and $\psi_1\otimes\psi_2(a)\in\mathcal{A}$, the claim follows.
\end{proof}
\end{prop}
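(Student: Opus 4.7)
The plan is to prove the $n=2$ case directly using Theorem~\ref{thm:finitelymanyaveraging} applied coordinatewise, and then bootstrap to arbitrary finite $n$ by an elementary induction on the number of factors. Since the claim is a statement about a single tensor factor at a time, the induction is straightforward: once we have $(\mathbb{E}_{s_1}\otimes \mathbb{E}_{s_2})(\mathcal{A})\subset \mathcal{A}$ for invariant subalgebras of a product of two factors, we can treat $\Gamma_1\times\cdots\times\Gamma_{n-1}$ as one factor and $\Gamma_n$ as the other, and apply the $n=2$ result after observing that the invariant subalgebra assumption survives this regrouping.

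For the $n=2$ case, fix $a\in\mathcal{A}$ and $\varepsilon>0$. The first step is to approximate $a$ in $C_r^*(\Gamma_1)\otimes_{\min} C_r^*(\Gamma_2)$ within $\varepsilon/4$ by a finite sum of elementary tensors $\sum_{i=1}^k a_i\otimes b_i$, and set $\mathbb{M}=\max_i\{\|a_i\|,\|b_i\|\}$. Next I would apply Theorem~\ref{thm:finitelymanyaveraging} twice: once inside $\Gamma_1$ to the finite family $\{a_1,\ldots,a_k\}$, producing a finite average $\psi_1(\cdot)=\frac{1}{m}\sum_j \lambda(t_j)(\cdot)\lambda(t_j)^*$ with each $t_j\in\langle s_1\rangle$ such that $\|\psi_1(a_i)-\mathbb{E}_{s_1}(a_i)\|<\varepsilon/(4k\mathbb{M})$ for all $i$; and once inside $\Gamma_2$ to $\{b_1,\ldots,b_k\}$, producing analogously $\psi_2$ based on $\langle s_2\rangle$. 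The usability of Theorem~\ref{thm:finitelymanyaveraging} here relies on Lemma~\ref{lem: setwise stablizer of primitive elements}, which ensures that the north-pole/south-pole dynamics hypothesis is satisfied for primitive elements acting on the Gromov boundary of each factor.

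Since $\Gamma$ is a product and $\langle s_1\rangle\times\{e\}$ and $\{e\}\times\langle s_2\rangle$ both sit in $\Gamma$, the $\Gamma$-invariance of $\mathcal{A}$ gives that $(\psi_1\otimes\mathrm{id})$ and $(\mathrm{id}\otimes\psi_2)$, and hence their composition $\psi_1\otimes\psi_2$, map $\mathcal{A}$ into itself (each is a finite convex combination of inner automorphisms by elements of $\Gamma$). It now suffices to compare $\psi_1\otimes\psi_2(a)$ with $(\mathbb{E}_{s_1}\otimes \mathbb{E}_{s_2})(a)$. Using the triangle inequality and inserting the intermediate term $\sum_i \psi_1(a_i)\otimes \psi_2(b_i)$ (and further telescoping through $\sum_i \mathbb{E}_{s_1}(a_i)\otimes \psi_2(b_i)$ and $\sum_i \mathbb{E}_{s_1}(a_i)\otimes \mathbb{E}_{s_2}(b_i)$), together with the fact that $\psi_j$ and $\mathbb{E}_{s_j}$ are unital completely positive (so norm-decreasing on each factor), one obtains a bound of the form $\varepsilon/4+\varepsilon/4+\varepsilon/4+\varepsilon/4=\varepsilon$.

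The main obstacle, which I do not expect to be severe, is to keep careful track of the cross-norm estimates in the telescoping argument so that no error term gets amplified by an unbounded factor. The key point is that $\|\mathbb{E}_{s_j}(a_i)\|\le\|a_i\|\le\mathbb{M}$ and $\|\psi_j(b_i)\|\le\mathbb{M}$, so each of the $k$ terms in each telescoping piece contributes at most $\mathbb{M}\cdot \varepsilon/(4k\mathbb{M})=\varepsilon/(4k)$. Summing over $i$ yields $\varepsilon/4$ for each piece, and since $\psi_1\otimes\psi_2(a)\in\mathcal{A}$ and $\mathcal{A}$ is norm-closed, letting $\varepsilon\to 0$ concludes that $(\mathbb{E}_{s_1}\otimes\mathbb{E}_{s_2})(a)\in\mathcal{A}$.
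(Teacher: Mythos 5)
Your proposal is correct and follows essentially the same route as the paper: approximate $a$ by a finite sum of elementary tensors, apply Theorem~\ref{thm:finitelymanyaveraging} coordinatewise to the finite families $\{a_i\}$ and $\{b_i\}$ to build the averaging maps $\psi_1,\psi_2$ (whose applicability for primitive elements rests on Lemma~\ref{lem: setwise stablizer of primitive elements}), and then telescope through the same intermediate terms with the same $\epsilon/4$ bookkeeping, concluding via norm-closedness of $\mathcal{A}$ and induction on the number of factors. No gaps.
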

\subsection{Need for reduction}
Invariance under the coordinate projection is not enough to guarantee the C$^*$-ISR-property. Since our proof relies on induction, we need to be able to reduce the $n$-th step to the $(n-1)$th step. To do this, we take the help of the slice map with respect to the canonical trace $\tau_0$. Given $\Gamma=\Gamma_1\times\Gamma_2\times\cdots\times\Gamma_n$, let $\tau_0^i$ denote the canonical trace on $C_r^*(\Gamma_i)$ for each $i=1,2,\ldots,n$. By an abuse of notation, $\mathbb{E}_{\tau_0^i}:C_r^*(\Gamma_1)\otimes_{\text{min}} C_r^*(\Gamma_2)\otimes_{\text{min}}\cdots\otimes_{\text{min}} C_r^*(\Gamma_n)\to C_r^*(\Gamma_1)\otimes_{\text{min}} C_r^*(\Gamma_2)\otimes_{\text{min}}\cdots\otimes_{\text{min}} C_r^*(\Gamma_n)$ is defined by
\[\mathbb{E}_{\tau_0^i}(x_1\otimes x_2\otimes\cdots\otimes x_n)=x_1\otimes\cdots \otimes x_{i-1}\otimes\tau_0(x_i)\otimes x_{i+1}\otimes\cdots \otimes x_n.\]
\begin{theorem}
\label{thm:invunderslice}
Let  $n\geq 2$ and $\Gamma=\Gamma_1\times\Gamma_2\times\cdots\times\Gamma_n$, where each $\Gamma_i$ is a $C^*$-simple group. Let $\mathcal{A}\le C_r^*(\Gamma)$ be a $\Gamma$-invariant C$^*$-subalgebra. Then, $\mathbb{E}_{\tau_0^i}(\mathcal{A})\subset\mathcal{A}$ for each $i=1,2,\ldots,n$.
\end{theorem}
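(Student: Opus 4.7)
The plan is to mimic the proof of Proposition~\ref{prop:modificationaveraging}, replacing the use of the selective averaging from Proposition~\ref{prop:plclosure} (which averages onto $C_r^*(\langle s\rangle)$) by the honest Powers averaging onto $\mathbb{C}\cdot 1$ that is available for every $C^*$-simple group. Recall that by Haagerup's characterization of $C^*$-simplicity~\cite{haagerup2016new}, for a $C^*$-simple $\Gamma_i$ and any $a\in C_r^*(\Gamma_i)$, $\epsilon>0$, there exist $g_1,\dots,g_m\in\Gamma_i$ with
\[
\left\|\tfrac{1}{m}\sum_{j=1}^{m}\lambda(g_j)\,a\,\lambda(g_j)^*-\tau_0^i(a)\,1\right\|<\epsilon.
\]
By induction it suffices to do the case $n=2$ and $i=1$; the argument for arbitrary $i$ is symmetric, and once $\mathbb{E}_{\tau_0^i}$ preserves $\mathcal{A}$ for each single coordinate the general case follows by composition.

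For $n=2$, fix $a\in\mathcal{A}$ and $\epsilon>0$. Identify $C_r^*(\Gamma)\cong C_r^*(\Gamma_1)\otimes_{\min}C_r^*(\Gamma_2)$ and pick an elementary-tensor approximation
\[
\Bigl\|a-\sum_{k=1}^{N}a_k\otimes b_k\Bigr\|<\tfrac{\epsilon}{4},
\qquad \mathbb{M}:=\max_{k}\{\|a_k\|,\|b_k\|\}.
\]
The key technical step is a \emph{simultaneous} Powers averaging in $\Gamma_1$: there exist $g_1,\dots,g_m\in\Gamma_1$ with
\[
\left\|\tfrac{1}{m}\sum_{j=1}^{m}\lambda(g_j)\,a_k\,\lambda(g_j)^*-\tau_0^1(a_k)\,1\right\|<\tfrac{\epsilon}{4N\mathbb{M}}\qquad(k=1,\dots,N).
\]
This is established exactly as in Theorem~\ref{thm:finitelymanyaveraging}: the inequality $\|\sum_i c_i\lambda(s_i)\|\le\|\sum_i|c_i|\lambda(s_i)\|$ reduces simultaneous averaging of several elements to single-element Powers averaging applied to the auxiliary element with moduli of all coefficients in play. (One could also invoke the Dixmier-type property: the weak$^*$ closure of the orbit under $\Gamma_1$-conjugation of any state contains $\tau_0^1$, and a standard Hahn--Banach separation argument then produces the required finite averages simultaneously.)

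Let $\psi_1(x):=\tfrac{1}{m}\sum_{j}\lambda(g_j)\,x\,\lambda(g_j)^*$, which is the restriction to the first tensor factor of the unital completely positive map $\Phi(y):=\tfrac{1}{m}\sum_{j}\lambda(g_j,e)\,y\,\lambda(g_j,e)^*$ on $C_r^*(\Gamma)$. Since $\mathcal{A}$ is $\Gamma$-invariant and norm-closed, $\Phi(a)\in\mathcal{A}$. Applying the triangle inequality exactly as in Proposition~\ref{prop:modificationaveraging}, with $\psi_1$ in place there and $\tau_0^1$ in place of $\mathbb{E}_{s_1}$ (and the identity map in the second coordinate), yields
\[
\left\|\Phi(a)-\mathbb{E}_{\tau_0^1}(a)\right\|<\epsilon.
\]
Since $\Phi(a)\in\mathcal{A}$ and $\mathcal{A}$ is closed, $\mathbb{E}_{\tau_0^1}(a)\in\mathcal{A}$; arbitrariness of $a$ finishes the case $n=2$, $i=1$, and the general result follows.

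The only real obstacle is the simultaneous Powers averaging step; by contrast, the ``need for reduction'' mentioned in the surrounding discussion is automatic once the simultaneous version is in hand, since one can treat each coordinate independently of the others by conjugating with elements of the form $(e,\dots,e,g_j,e,\dots,e)$. No $C^*$-simplicity of $\Gamma$ itself is needed, only of each factor $\Gamma_i$, which is precisely the hypothesis of the theorem.
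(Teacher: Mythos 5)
Your proposal is correct and follows essentially the same route as the paper: both prove the $n=2$ case by approximating $a$ by a finite sum of elementary tensors and then using Powers averaging (via Haagerup's characterization of $C^*$-simplicity) in one tensor factor to replace that coordinate by its trace, with a triangle-inequality bookkeeping identical to Proposition~\ref{prop:modificationaveraging}. The only cosmetic difference is that the paper approximates by tensors of group elements and invokes Haagerup's simultaneous averaging of $\lambda(t)$, $t\neq e$, directly, whereas you approximate by general elementary tensors and derive the simultaneous averaging of general elements via the coefficient-modulus trick of Theorem~\ref{thm:finitelymanyaveraging}; both are valid.
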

\begin{proof}
We do it for $n=2$ for ease of notation. Without any loss of generality, we can look at $\mathbb{E}_{\tau_0^2}$. Let $a\in\mathcal{A}$ and $\epsilon>0$ be given. We can find finite sets $F_1\times F_2\subset\Gamma_1\times\Gamma_2$ and complex numbers $c_{s,t}\in\mathbb{C}$ such that for all $t\in F_2\setminus\{e\}$,
\begin{equation}
\label{eq:nonidentityapprox}
    a\approx_{\frac{\epsilon}{3}}\sum_{(s,t)\in F_1\times(F_2\setminus\{e\})}c_{s,t}\lambda(s)\otimes\lambda(t)+\sum_{s\in F_1}c_{s,e}\lambda(s)\otimes\lambda(e).
\end{equation}
This, in particular, implies that
\begin{equation}
\label{eq:slicenonidentityapprox}
    \mathbb{E}_{\tau_0^2}(a)\approx_{\frac{\epsilon}{3}}\sum_{s\in F_1}c_{s,e}\lambda(s)\otimes\lambda(e).
\end{equation}
Let $M=\max_{(s,t)\in F_1\times F_2}\{|c_{s,t}|\}$. Since $\Gamma_2$ is $C^*$-simple, using \cite[Theorem 5.1]{haagerup2016new}, we can find $\{t_1,t_2,\ldots,t_m\}\subset\Gamma_2$ such that
\begin{equation}
\label{eq:haagerup}
   \left\|\frac{1}{m}\sum_{j=1}^m\lambda(t_jtt_j^{-1})\right\|<\frac{\epsilon}{3M|F_1||F_2|},~\forall t\in F_2\setminus\{e\}.
\end{equation}
Letting $\psi_2:C_r^*(\Gamma_2)\to C_r^*(\Gamma_2)$ be defined by $\psi_2(\cdot)=\frac{1}{m}\sum_{j=1}^m\lambda(t_j)(\cdot)\lambda(t_j)^*$, we see that
\begin{equation}
\label{eq:ucpidentityapprox}
    (id\otimes\psi_2)(a)\approx_{\frac{\epsilon}{3}}\sum_{(s,t)\in F_1\times(F_2\setminus\{e\})}c_{s,t}\lambda(s)\otimes\left(\frac{1}{m}\sum_{j=1}^m\lambda(t_jtt_j^{-1})\right)+\sum_{s\in F_1}c_{s,e}\lambda(s)\otimes\lambda(e).
\end{equation}
Now,
\begin{align*}
&\left\|\mathbb{E}_{\tau_0^2}(a)-(id\otimes\psi_2)(a)\right\|\\&\le\left\|\mathbb{E}_{\tau_0^2}(a)-\sum_{s\in F_1}c_{s,e}\lambda(s)\otimes\lambda(e)\right\|\\&+\left\|\sum_{s\in F_1}c_{s,e}\lambda(s)\otimes\lambda(e)+\sum_{(s,t)\in F_1\times(F_2\setminus\{e\})}c_{s,t}\lambda(s)\otimes\left(\frac{1}{m}\sum_{j=1}^m\lambda(t_jtt_j^{-1})\right)-(id\otimes\psi_2)(a)\right\|\\&+\left\|\sum_{(s,t)\in F_1\times(F_2\setminus\{e\})}c_{s,t}\lambda(s)\otimes\left(\frac{1}{m}\sum_{j=1}^m\lambda(t_jtt_j^{-1})\right)\right\|\\&\stackrel{\eqref{eq:slicenonidentityapprox}}{\le}\frac{\epsilon}{3}\stackrel{\eqref{eq:ucpidentityapprox}}{+}\frac{\epsilon}{3}+ \sum_{(s,t)\in F_1\times(F_2\setminus\{e\})}\left\|c_{s,t}\lambda(s)\otimes\left(\frac{1}{m}\sum_{j=1}^m\lambda(t_jtt_j^{-1})\right)  \right\|\\&\le \frac{2\epsilon}{3}+\sum_{(s,t)\in F_1\times(F_2\setminus\{e\})}|c_{s,t}|\left\|\left(\frac{1}{m}\sum_{j=1}^m\lambda(t_jtt_j^{-1})\right)  \right\|\\&\le \frac{2\epsilon}{3}\stackrel{\eqref{eq:haagerup}}{+}\sum_{(s,t)\in F_1\times(F_2\setminus\{e\})}|c_{s,t}|\frac{\epsilon}{3M|F_1||F_2|}\\&<\epsilon.
\end{align*}
Since $\epsilon>0$ is arbitrary and $(id\otimes\psi_2)(a)\in\mathcal{A}$, the claim follows.
\end{proof}

\begin{cor}\label{cor: mixedsliceandcemaps}
Let  $n\geq 2$ and $\Gamma=\Gamma_1\times\Gamma_2\times\cdots\times\Gamma_n$ be a finite product of groups where each $\Gamma_i$ is a torsion-free non-amenable hyperbolic group. Let $\mathcal{A}\le C_r^*(\Gamma)$ be a $\Gamma$-invariant $C^*$-subalgebra. Let $g=(s_1^{\epsilon_1},s_2^{\epsilon_2},\ldots,s_n^{\epsilon_n})\in\Gamma$ be such that each $s_i$ is a primitive element and $\epsilon_i\in\{0,1\}$ for each $1\leq i\leq n$. Then,  $$\mathbb{E}_{s_1^{\epsilon_1}}\otimes\mathbb{E}_{s_2^{\epsilon_2}}\otimes\ldots\otimes\mathbb{E}_{s_n^{\epsilon_n}}(\mathcal{A})\subset\mathcal{A}.$$
\end{cor}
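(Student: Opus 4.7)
The plan is to deduce this corollary as a direct combination of Proposition~\ref{prop:modificationaveraging} and Theorem~\ref{thm:invunderslice}. Let $S = \{i \in \{1, \ldots, n\} : \epsilon_i = 0\}$, so that $s_i^{\epsilon_i} = e$ exactly when $i \in S$. For each $i \in S$, identifying $C_r^*(\langle e \rangle) = \mathbb{C}$, the map $\mathbb{E}_{s_i^{\epsilon_i}} = \mathbb{E}_e$ on the $i$-th tensor factor is precisely the canonical trace $\tau_0^i$, so that the tensor product of $\mathbb{E}_e$ on the $i$-th factor with the identity on the remaining factors coincides with the slice map $\mathbb{E}_{\tau_0^i}$ of Theorem~\ref{thm:invunderslice}.

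First, I would iteratively apply Theorem~\ref{thm:invunderslice} to each $i \in S$; this is legitimate since each torsion-free non-amenable hyperbolic $\Gamma_i$ is $C^*$-simple. Enumerating $S = \{i_1, \ldots, i_k\}$, this yields
\[
\mathcal{A}'' := (\mathbb{E}_{\tau_0^{i_k}} \circ \cdots \circ \mathbb{E}_{\tau_0^{i_1}})(\mathcal{A}) \subset \mathcal{A}.
\]
Because the canonical trace is conjugation-invariant on each factor, each $\mathbb{E}_{\tau_0^{i}}$ is $\Gamma$-equivariant, hence $\mathcal{A}''$ is still a $\Gamma$-invariant subalgebra of $C_r^*(\Gamma)$. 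Every element of $\mathcal{A}''$ is scalar on each coordinate indexed by $S$, so we may naturally identify $\mathcal{A}''$ with a subalgebra of $C_r^*(\Gamma')$, where $\Gamma' := \prod_{i \notin S} \Gamma_i$. Under this identification the conjugation action of each $\Gamma_i$ with $i \in S$ becomes trivial, so $\mathcal{A}''$ is in fact a $\Gamma'$-invariant subalgebra of $C_r^*(\Gamma')$.

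Next, I would apply Proposition~\ref{prop:modificationaveraging} to the $\Gamma'$-invariant subalgebra $\mathcal{A}'' \subset C_r^*(\Gamma')$ using the element $g' := (s_i)_{i \notin S} \in \Gamma'$, each of whose coordinates is primitive by hypothesis. This gives
\[
\Bigl( \bigotimes_{i \notin S} \mathbb{E}_{s_i} \Bigr)(\mathcal{A}'') \subset \mathcal{A}''.
\]
Composing with the previous inclusion and unravelling the identifications, the left-hand side is exactly $\bigl(\mathbb{E}_{s_1^{\epsilon_1}} \otimes \cdots \otimes \mathbb{E}_{s_n^{\epsilon_n}}\bigr)(\mathcal{A})$, producing the claim.

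There is no serious obstacle in this argument; the two key ingredients have already done all the analytic work. The one subtlety worth double-checking is bookkeeping, namely that the tensor-product map $\mathbb{E}_{s_1^{\epsilon_1}} \otimes \cdots \otimes \mathbb{E}_{s_n^{\epsilon_n}}$ on the original $C_r^*(\Gamma)$ really does factor as claimed through the identification of $\mathcal{A}''$ with a subalgebra of $C_r^*(\Gamma')$. This reduces to verifying the factorization on elementary tensors $x_1 \otimes \cdots \otimes x_n$, where it is immediate from $\mathbb{E}_e(x_i) = \tau_0^i(x_i) \cdot 1$ for $i \in S$ and $\mathbb{E}_{s_i}(x_i) = \mathbb{E}_{s_i}(x_i)$ for $i \notin S$.
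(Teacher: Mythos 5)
Your argument is correct and is essentially the paper's own proof: both factor $\mathbb{E}_{s_1^{\epsilon_1}}\otimes\cdots\otimes\mathbb{E}_{s_n^{\epsilon_n}}$ through the slice maps $\mathbb{E}_{\tau_0^i}$ on the coordinates with $\epsilon_i=0$ (handled by Theorem~\ref{thm:invunderslice}) and then apply the averaging result on the remaining coordinates; the paper merely organizes this as an induction on $n$, peeling off one trivial coordinate at a time, while you do it in one step. The only point worth making explicit is that $\mathcal{A}''$ is a $C^*$-subalgebra because it equals $\mathcal{A}\cap C_r^*(\Gamma')$ (using that each slice map fixes its range and maps $\mathcal{A}$ into $\mathcal{A}$), which is exactly the identification the paper also uses.
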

\begin{proof}

Without loss of generality, we may assume that $\epsilon_i=1$ iff $1\leq i\leq k$ for some $0\leq k\leq n$.

If $k=0$, then $\epsilon_i=0$, i.e. $s_i^{\epsilon_i}=e$ for all $i\geq 1$. Observe that $\mathbb{E}_{e}\otimes \cdots\otimes\mathbb{E}_{e}=\mathbb{E}_{\tau_0^1}\circ \cdots\circ \mathbb{E}_{\tau_0^n}$.
The proof follows by Theorem \ref{thm:invunderslice}. Similarly, the case $k=n$ is treated by Proposition \ref{prop:modificationaveraging}.
From now on, we may assume that $0<k<n$.

Let us do induction on $n$.

For $n=2$. Then $k=1$.
Take any $a\in \mathcal{A}$.
Note that $\mathbb{E}_{s_1}\otimes \mathbb{E}_e=(\mathbb{E}_{s_1}\otimes \text{Id})\circ \mathbb{E}_{\tau_0^2}$. Hence
we may assume $a=\mathbb{E}_{\tau_0^2}(a)\in \mathcal{A}$ by Theorem \ref{thm:invunderslice}. In other words, $a\in \mathcal{A}\cap C^*_r(\Gamma_1)$, which is $\Gamma_1$-invariant. Here we have identified $\Gamma_1$ as the subgroup $\Gamma_1\times \langle e\rangle$ in $\Gamma$.
Therefore, by Proposition \ref{prop:imageofanelementunderconditionalexpectation}, we deduce that $\mathbb{E}_{s_1}\otimes \mathbb{E}_e(a)=\mathbb{E}_{s_1}\otimes \text{Id}(a)\in \mathcal{A}$.
This finishes the proof of case $n=2$.

Assume the proof of case $n-1$ is done,  then notice that for the case $n$,
\begin{align*}
\mathbb{E}_{s_1}\otimes \cdots\otimes \mathbb{E}_{s_k}\otimes \underbrace{\mathbb{E}_e\otimes \cdots\otimes \mathbb{E}_{e}}_{n-k}=(\mathbb{E}_{s_1}\otimes \cdots\otimes \mathbb{E}_{s_k}\otimes \underbrace{\mathbb{E}_e\otimes \cdots\otimes \mathbb{E}_{e}}_{n-k-1}\otimes \text{Id})\circ \mathbb{E}_{\tau_0^n}.
\end{align*}
Given any $a\in \mathcal{A}$. We may assume that $a=\mathbb{E}_{\tau_0^n}(a)\in \mathcal{A}\cap C^*_r(\Gamma_1\times \cdots\times \Gamma_{n-1})$, which is $\Gamma_1\times \cdots\times \Gamma_{n-1}$-invariant. By the induction step, we deduce that $\mathbb{E}_{s_1}\otimes \cdots\otimes \mathbb{E}_{s_k}\otimes \mathbb{E}_e\otimes \cdots\otimes\mathbb{E}_e(a)\in \mathcal{A}$.
\end{proof}
\subsection{\texorpdfstring{$C^*$}{}-ISR for products}
We now prove the main result of this section. The proof is almost the same as in Theorem~\ref{thm:invfromsubgroup}. Nonetheless, we spell out the details.
\begin{theorem}
\label{thm:invfromprod}
Let $n\geq 2$ and $\Gamma=\Gamma_1\times\Gamma_2\times\cdots\times\Gamma_n$ be a finite product of groups where each $\Gamma_i$ is a torsion-free non-amenable hyperbolic group. Let $\mathcal{A}\le C_r^*(\Gamma)$ be a $\Gamma$-invariant $C^*$-subalgebra. Then, $\mathcal{A}=C_r^*(N)$ for some normal subgroup $N\triangleleft\Gamma$.
\end{theorem}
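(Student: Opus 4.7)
The plan is to mimic the proof of Theorem~\ref{thm:invfromsubgroup} by running it simultaneously in all $n$ coordinates, exploiting the tensor decomposition $C_r^*(\Gamma)\cong C_r^*(\Gamma_1)\otimes_{\min}\cdots\otimes_{\min}C_r^*(\Gamma_n)$ together with the multi-coordinate conditional-expectation invariance furnished by Corollary~\ref{cor: mixedsliceandcemaps}. As in Proposition~\ref{prop:genstrategy}, it suffices to fix a non-zero $a\in\mathcal{A}$ and show that whenever $\tau_0(a\lambda(g)^*)\neq 0$ for some $g=(g_1,\dots,g_n)\in\Gamma\setminus\{e\}$, the unitary $\lambda(g)$ lies in $\mathcal{A}$. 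Once this is established, $N:=\{g\in\Gamma:\lambda(g)\in\mathcal{A}\}$ is a normal subgroup, and Proposition~\ref{prop:supoprtofsubalg} identifies $\mathcal{A}$ with $C_r^*(N)$.

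To produce $\lambda(g)$, for each coordinate $i$ with $g_i\neq e$, write $g_i=t_i^{n_i}$ for a primitive element $t_i\in\Gamma_i$ and some $n_i\neq 0$ via Remark~\ref{rem:maximalamenable}; for coordinates with $g_i=e$, set $t_i=e$ and $\epsilon_i=0$, while otherwise $\epsilon_i=1$. Applying Corollary~\ref{cor: mixedsliceandcemaps} with these choices gives
\[
b:=\big(\mathbb{E}_{t_1^{\epsilon_1}}\otimes\cdots\otimes\mathbb{E}_{t_n^{\epsilon_n}}\big)(a)\in\mathcal{A},
\]
an element supported on $\prod_{i}\langle t_i^{\epsilon_i}\rangle$ and containing the distinguished monomial $\tau_0(a\lambda(g)^*)\lambda(g)$. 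Next, using the fact that each non-elementary hyperbolic group $\Gamma_i$ contains free subgroups (indeed has property $P_{\text{nai}}$), choose, for each $i$ with $g_i\neq e$, an element $h_i\in\Gamma_i$ with $\langle t_i,h_i\rangle\cong \langle t_i\rangle\star\langle h_i\rangle\cong\mathbb{F}_2$; set $h_i=e$ otherwise. Set $h=(h_1,\dots,h_n)\in\Gamma$; since $\mathcal{A}$ is $\Gamma$-invariant, $\lambda(h)a\lambda(h)^*\in\mathcal{A}$, and applying the conjugated mixed projection (replacing each $t_i$ by $h_it_ih_i^{-1}$) yields a second element of $\mathcal{A}$ supported on $\prod_i\langle h_it_i^{\epsilon_i}h_i^{-1}\rangle$.

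Multiply these two members of $\mathcal{A}$ to obtain an element whose expansion, after a further mixed conditional expectation onto $\langle g_1h_1g_1h_1^{-1}\rangle\times\cdots\times\langle g_nh_ng_nh_n^{-1}\rangle$ (using Corollary~\ref{cor: mixedsliceandcemaps} once more), singles out a constant multiple of $\lambda\bigl((g_1h_1g_1h_1^{-1},\dots,g_nh_ng_nh_n^{-1})\bigr)$: for each coordinate $i$ with $\epsilon_i=1$, Lemma~\ref{lem: trick to handle primitivity} applied inside $\Gamma_i$ ensures that the only contributing monomials are $e$ and $g_ih_ig_ih_i^{-1}$, while the $\epsilon_i=0$ coordinates are constants. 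Since $\tau_0(a\lambda(g)^*)\neq 0$, this places $u:=(g_1h_1g_1h_1^{-1},\dots,g_nh_ng_nh_n^{-1})\in\Gamma$ in $\mathcal{A}$. Repeating with $h$ replaced by $h^2$ (which still satisfies $\langle t_i,h_i^2\rangle\cong\mathbb{F}_2$ coordinate-wise) produces $v:=(g_1h_1^2g_1h_1^{-2},\dots,g_nh_n^2g_nh_n^{-2})\in\mathcal{A}$, and the algebraic manipulation at the end of Theorem~\ref{thm:invfromsubgroup}, performed in each coordinate independently, extracts $g^2\in\mathcal{A}$. A final application of the mixed conditional expectation to the product $b\cdot(hg^2h^{-1})$ yields $\lambda(g)\in\mathcal{A}$ after one more invariance step, exactly mirroring the single-coordinate argument.

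The main technical hurdle is the careful bookkeeping when some $g_i=e$, where the conditional expectation collapses into the slice map $\mathbb{E}_{\tau_0^i}$; Corollary~\ref{cor: mixedsliceandcemaps} is tailored precisely to handle this heterogeneity, so the single-coordinate algebraic identities (powered by Lemma~\ref{lem: trick to handle primitivity}) transport intact to the tensor-product setting. No genuinely new estimate is needed beyond those already assembled in Section~5; the work is essentially combinatorial/algebraic and proceeds coordinate-by-coordinate.
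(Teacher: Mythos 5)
Your overall route is the same as the paper's: reduce to showing $\lambda(g)\in\mathcal{A}$ whenever $\tau_0(a\lambda(g)^*)\neq 0$, use the mixed conditional expectations of Corollary~\ref{cor: mixedsliceandcemaps} together with a free partner $h$ and Lemma~\ref{lem: trick to handle primitivity} to run the ping-pong of Theorem~\ref{thm:invfromsubgroup} coordinate-wise, and finish with Proposition~\ref{prop:supoprtofsubalg}. (The paper does $n=2$ and induction; you do all coordinates simultaneously, which is a cosmetic difference.)

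However, there is a genuine gap at the step where you claim that the mixed conditional expectation onto $\langle g_1h_1g_1h_1^{-1}\rangle\times\cdots\times\langle g_nh_ng_nh_n^{-1}\rangle$ applied to the product of the two averaged elements ``singles out a constant multiple of $\lambda(u)$.'' As you yourself note, in each coordinate the surviving monomials are $e$ \emph{and} $g_ih_ig_ih_i^{-1}$. Since these choices are made independently coordinate by coordinate, the projection retains $2^{|\{i:\,g_i\neq e\}|}$ monomials, namely all mixed words that equal $g_ih_ig_ih_i^{-1}$ in some coordinates and $e$ in the others. The mixed word that is, say, $g_1h_1g_1h_1^{-1}$ in the first coordinate and $e$ in the second carries coefficient $c_{(t_1^{n_1},e)}^2$, where $c_{(t_1^{n_1},e)}=\tau_0\bigl(a\,(\lambda(t_1^{n_1})\otimes\lambda(e))^*\bigr)$, and there is no reason for this to vanish. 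So the output of this step is not a scalar multiple of $\lambda(u)$ and the argument stalls. The paper avoids this by first settling the cases where some coordinate of $g$ is trivial (your $\epsilon_i=0$ coordinates handled via the slice maps), concluding that all such $\lambda(t_1^k)\otimes\lambda(e)$ and $\lambda(e)\otimes\lambda(t_2^l)$ with non-zero coefficient already lie in $\mathcal{A}$, and then subtracting them off so that one may assume $c_{t_1^k,e}=0=c_{e,t_2^l}$ for all $k,l$ before forming the product. With that reduction in place the cross terms disappear and only $c_{(t_1^{n_1},\dots,t_n^{n_n})}^2\lambda(u)$ survives. Your proposal needs this explicit reduction (an ordering of the cases plus the subtraction argument) for the key isolation step to be valid; the remaining algebraic manipulations then go through as you describe.
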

\begin{proof}
We prove the case $n=2$. The general case follows by induction. Let $\mathcal{A} \le C_r^*(\Gamma)$ be a non-trivial invariant subalgebra. Let $a \in \mathcal{A}$ be non-zero. We wish to show that if the coefficient of $\lambda(g) \otimes \lambda(h)$ in the expansion of $a$ is non-zero, then $\lambda(g) \otimes \lambda(h) \in \mathcal{A}$.
Let $c_{g,h} = (\tau_0 \otimes \tau_0)(a (\lambda(g) \otimes \lambda(h))^*)$. Assume $c_{g,h} \neq 0$.

\textit{Case 1: $h=e$ (or symmetrically $g=e$).}
By Corollary~\ref{cor: mixedsliceandcemaps}, we may assume $a \in \mathcal{A} \cap C_r^*(\Gamma_1)$. This reduces to Theorem~\ref{thm:invfromsubgroup}, implying $\lambda(g) \otimes 1 \in \mathcal{A}$.

\textit{Case 2: $g \neq e$ and $h \neq e$.}
Write $g=t_1^n$ and $h=t_2^m$ for primitive elements $t_1 \in \Gamma_1, t_2 \in \Gamma_2$ and integers $n,m \neq 0$. Let $h_1 \in \Gamma_1, h_2 \in \Gamma_2$ be elements such that $\langle t_i, h_i \rangle \cong \mathbb{F}_2$.
Let $0<\epsilon < 1$. By Proposition~\ref{prop:modificationaveraging}, $x := (\mathbb{E}_{t_1} \otimes \mathbb{E}_{t_2})(a) \in \mathcal{A}$.
We can approximate $x$ by a finite sum supported on powers of $t_1$ and $t_2$. There exist finite sets $F_1, F_2 \subset \mathbb{Z}$ (containing $n$ and $m$ respectively) such that:
\begin{equation} \label{eq:prod_approx_1}
    \left\| x - \sum_{k \in F_1, l \in F_2} c_{t_1^k, t_2^l} t_1^k \otimes t_2^l \right\| < \epsilon.
\end{equation}
Let $u = (\lambda(h_1) \otimes \lambda(h_2))$. By invariance, $b := u a u^* \in \mathcal{A}$.
Applying expectations, let $x' := (\mathbb{E}_{h_1 t_1 h_1^{-1}} \otimes \mathbb{E}_{h_2 t_2 h_2^{-1}})(b) \in \mathcal{A}$.
Following the logic of the single variable case, the cross terms vanish under the expectation, yielding:
\begin{equation} \label{eq:prod_approx_2}
    \left\| x' - \sum_{k \in F_1, l \in F_2} c_{t_1^k, t_2^l} (h_1 t_1^k h_1^{-1}) \otimes (h_2 t_2^l h_2^{-1}) \right\| < \epsilon.
\end{equation}
Consider the product $y := x x' \in \mathcal{A}$. Multiplying the approximations:
\[ y \approx_{(2\left\|a\right\|+\epsilon)\epsilon} \sum_{k,p \in F_1} \sum_{l,q \in F_2} c_{t_1^k, t_2^l} c_{t_1^p, t_2^q} (t_1^k h_1 t_1^p h_1^{-1}) \otimes (t_2^l h_2 t_2^q h_2^{-1}). \]
Let $w_1 = t_1^n h_1 t_1^n h_1^{-1}$ and $w_2 = t_2^m h_2 t_2^m h_2^{-1}$. Let $v_1, v_2$ be primitive elements in $\Gamma_1, \Gamma_2$ generating the maximal virtually cyclic subgroups containing $w_1, w_2$.
We apply $(\mathbb{E}_{v_1} \otimes \mathbb{E}_{v_2})$ to $y$.
Using Lemma~\ref{lem: trick to handle primitivity} component-wise, we deduce the following.
\begin{itemize}
    \item In the first coordinate, $t_1^k h_1 t_1^p h_1^{-1}$ commutes with $w_1$ iff $k=p=n$.
    \item In the second coordinate, $t_2^l h_2 t_2^q h_2^{-1}$ commutes with $w_2$ iff $l=q=m$.
\end{itemize}
Thus, only the term corresponding to indices $(n,m)$ survives the expectation (assuming we filter out identity terms which we can by Case 1).
\[ \left\| (\mathbb{E}_{v_1} \otimes \mathbb{E}_{v_2})(y) - c_{t_1^n, t_2^m}^2 (t_1^n h_1 t_1^n h_1^{-1}) \otimes (t_2^m h_2 t_2^m h_2^{-1}) \right\| <(2\left\|a\right\|+\epsilon) \epsilon. \]
Since $c_{t_1^n, t_2^m} \neq 0$ and $\epsilon$ is arbitrary, we conclude $W := (t_1^n h_1 t_1^n h_1^{-1}) \otimes (t_2^m h_2 t_2^m h_2^{-1}) \in \mathcal{A}$.

Repeating the argument with $h_1^2, h_2^2$, we get $W' := (t_1^n h_1^2 t_1^n h_1^{-2}) \otimes (t_2^m h_2^2 t_2^m h_2^{-2}) \in \mathcal{A}$.
By standard algebraic manipulation (conjugating $W$ by $u^{-1} = h_1^{-1} \otimes h_2^{-1}$ and combining with $W'$), we isolate the element:
\[ (t_1^{2n} \otimes t_2^{2m}) \in \mathcal{A}. \]
Proceeding exactly as in the last steps of Theorem~\ref{thm:invfromsubgroup} (multiplying $x$ by a shifted conjugate and applying expectations), we eventually isolate the term $c_{t_1^n, t_2^m} (t_1^n \otimes t_2^m)$ in an $\epsilon$-neighborhood.
Thus $\lambda(g) \otimes \lambda(h) \in \mathcal{A}$.
\end{proof}
\begin{remark}
Our proof will show that $\Gamma=\Gamma_1\times\Gamma_2\times\ldots\times\Gamma_n$ has $C^*$-ISR property if every $\Gamma_i$ is a torsion-free non-amenable acylindrically hyperbolic.
\end{remark}
\section{Groups with the \texorpdfstring{$C^*$}{}-ISR property}
\label{sec:C*ISRrelations}

In this section, we show that for an infinite group $\Gamma$, if $\Gamma$ has the C$^*$-ISR property, then $\Gamma$ satisfies a certain dichotomy. In particular, either $\Gamma$ is a simple amenable group or $\Gamma$ is C$^*$-simple, i.e., $C^*_r(\Gamma)$ is a simple C$^*$-algebra. In particular, we prove Theorem~\ref{thm:conseISR}.

\begin{lem}\label{lem: obstruction to C*-ISR by non-codimension one ideals}
Assume that $C^*_r(\Gamma)$ has a non-zero proper closed two-sided ideal $I$ such that $C^*_r(\Gamma)/I\neq \mathbb{C}$. Then, $\Gamma$ does not have the C$^*$-ISR property.
\end{lem}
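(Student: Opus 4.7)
The natural candidate invariant subalgebra is $\mathcal{A} := \mathbb{C}\cdot 1 + I$. My plan is to verify that this is a $\Gamma$-invariant unital $C^*$-subalgebra of $C^*_r(\Gamma)$ which cannot equal $C^*_r(N)$ for any normal subgroup $N\trianglelefteq \Gamma$.

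First, I would check that $\mathcal{A}$ is a unital $C^*$-subalgebra. Since $I$ is proper, $\mathbb{C}\cdot 1 \cap I = \{0\}$, so $\mathcal{A} = \mathbb{C}\cdot 1 \oplus I$ as vector spaces; adding a finite dimensional subspace to the closed subspace $I$ produces a closed subspace, so $\mathcal{A}$ is norm-closed. Since $I$ is $*$-closed and a two-sided ideal, $(c+x)^* = \bar c + x^* \in \mathcal{A}$ and $(c_1+x_1)(c_2+x_2) = c_1 c_2 + (c_1 x_2 + c_2 x_1 + x_1 x_2) \in \mathcal{A}$. For $\Gamma$-invariance, any two-sided ideal is automatically stable under conjugation by $\lambda(g)$, while $\mathbb{C}\cdot 1$ is fixed pointwise.

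Second, I would argue by contradiction that no normal subgroup $N$ satisfies $\mathcal{A} = C^*_r(N)$. Suppose it does; then $I \subseteq C^*_r(N)$, so $\operatorname{Supp}(x) \subseteq N$ for every $x \in I$. Pick a nonzero $x\in I$ and some $s_0 \in \operatorname{Supp}(x)$. For any $g \in \Gamma$, $\lambda(g)x \in I$ (since $I$ is a left ideal), and a direct computation with $\tau_0$ shows that
\begin{equation*}
\operatorname{Supp}(\lambda(g)x) \;=\; g\cdot \operatorname{Supp}(x) \;\subseteq\; N.
\end{equation*}
In particular $gs_0 \in N$ for all $g\in\Gamma$, hence $\Gamma = Ns_0^{-1} = N$ (using $s_0\in N$). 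But then $\mathcal{A} = C^*_r(\Gamma)$, i.e., $\mathbb{C}\cdot 1 + I = C^*_r(\Gamma)$, which forces $C^*_r(\Gamma)/I \cong \mathbb{C}$ and contradicts the hypothesis. The edge case $N=\{e\}$ is excluded because it would give $\mathcal{A} = \mathbb{C}$ and thus $I=\{0\}$.

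There is no real obstacle here; the argument is essentially a matter of identifying the right candidate. The only point that requires a little care is the support computation showing $I \subseteq C^*_r(N)$ with $I\neq 0$ implies $N=\Gamma$, which is the same left-translation idea underlying Proposition~\ref{prop:supoprtofsubalg}. The hypothesis $C^*_r(\Gamma)/I \ne \mathbb{C}$ is used exactly once, to rule out the degenerate case where $\mathcal{A}$ coincides with $C^*_r(\Gamma)$ itself.
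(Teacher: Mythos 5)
Your proposal is correct and follows essentially the same route as the paper: both take the candidate invariant subalgebra $\mathbb{C}\cdot 1 + I$, show that the only normal subgroup $N$ with $C^*_r(N)=\mathbb{C}+I$ would have to be $\Gamma$ itself (the paper by left-translating the specific element $\lambda(h)-c_h\in I$, you by the equivalent observation that $\operatorname{Supp}(\lambda(g)x)=g\operatorname{Supp}(x)$ for $x\in I$), and then derive $C^*_r(\Gamma)/I\cong\mathbb{C}$, contradicting the hypothesis. The argument is complete as written.
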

\begin{proof}
Set $A=I+\mathbb{C}$, which is a unital C$^*$-subalgebra in $C^*_r(\Gamma)$ and clearly $A$ is $\Gamma$-invariant. Assume that $\Gamma$ has the C$^*$-ISR property, then $A=C^*_r(\Lambda)$ for some normal subgroup $\Lambda\lhd \Gamma$.

First, observe that $\Lambda=\Gamma$. Indeed, first note that $I\neq (0)$ implies $\Lambda\neq \{e\}$. Then fix any $e\neq h\in \Lambda$, from $C^*_r(\Lambda)=I+\mathbb{C}$, we may write $\lambda(h)=a_h+c_h$ for some $a_h\in I$ and $c_h\in\mathbb{C}$. For any $g\in \Gamma$, we have $$C^*_r(\Lambda)\supset I\ni \lambda(g)a_h=\lambda(g)(\lambda(h)-c_h)=\lambda(gh)-c_h\lambda(g).$$
Note that $gh\neq g$, the above shows that $gh\in \Lambda$. Since $g$ is arbitrary, we deduce that $\Gamma=\Lambda$.

Then from $C^*_r(\Gamma)=I+\mathbb{C}$, we deduce that $C^*_r(\Gamma)/I=(I+\mathbb{C})/I\cong \mathbb{C}/{(I\cap \mathbb{C})}$.

Since $C^*_r(\Gamma)/I\neq \mathbb{C}$, we get that $I\cap \mathbb{C}\neq  (0)$, therefore, $I=C^*_r(\Gamma)$, which contradicts to the properness of the ideal $I$.
\end{proof}
    We now use the dynamics of $\Gamma$-boundary to show that a non-$C^*$-simple group with $C^*$-ISR property must be amenable. An action \( \Gamma \curvearrowright X \) is called a boundary action (in this case, we say that $X$ is a $\Gamma$-boundary) if, for every probability measure \( \nu \in \text{Prob}(X) \), \( \{\delta_x: x \in X\} \subset \overline{\Gamma \nu}^{\text{weak}^*} \).
The Furstenberg boundary of \( \Gamma \), denoted \( \partial_F\Gamma \), is the universal object in this category in the sense that for any other \( \Gamma \)-boundary \( Y \), there is a continuous $\Gamma$-equivariant surjective map from \( \partial_F\Gamma \) to \( Y \).

\begin{lem}\label{lem: non C*-simple groups vefify the existence of non-codimension one ideals}
Let $\Gamma$ be a countable infinite non-amenable group which is not C$^*$-simple. Then $C^*_r(\Gamma)$ contains a non-zero proper closed two-sided ideal $I$ such that $C^*_r(\Gamma)/I\neq \mathbb{C}$
\end{lem}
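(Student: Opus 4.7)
The plan is to show that for non-amenable $\Gamma$, every non-zero proper closed two-sided ideal $I$ of $C^*_r(\Gamma)$ automatically satisfies $C^*_r(\Gamma)/I \neq \mathbb{C}$. Since $\Gamma$ is not $C^*$-simple, at least one such ideal exists by definition, so the conclusion then follows immediately. In other words, for non-amenable groups the condition $C^*_r(\Gamma)/I\neq\mathbb{C}$ is automatic, and the only content is the (trivial) existence of a non-trivial ideal.

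To establish the automatic part, I would argue by contradiction: suppose $C^*_r(\Gamma)/I = \mathbb{C}$ for some non-zero proper $I$. Composing the quotient map with the canonical embedding $\Gamma \hookrightarrow C^*_r(\Gamma)$, $g \mapsto \lambda(g)$, produces a group homomorphism $\chi: \Gamma \to \mathbb{T}$, since each $\lambda(g)$ is unitary in $C^*_r(\Gamma)$ and so its image is unitary in $\mathbb{C}$. By construction $\chi$ extends to a character of $C^*_r(\Gamma)$; equivalently, the one-dimensional representation $\chi$ is weakly contained in the left regular representation $\lambda_\Gamma$.

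The remaining step is the standard Fell/Hulanicki trick. Fell's absorption principle gives $\lambda_\Gamma \otimes \sigma \simeq \lambda_\Gamma^{\oplus \dim H_\sigma}$ for every unitary representation $\sigma$. Taking $\sigma = \bar{\chi}$ yields $\lambda_\Gamma \otimes \bar{\chi} \simeq \lambda_\Gamma$. Since weak containment is preserved under tensor products, $\chi \prec \lambda_\Gamma$ implies $1_\Gamma = \chi \otimes \bar{\chi} \prec \lambda_\Gamma \otimes \bar{\chi} \simeq \lambda_\Gamma$, so the trivial representation of $\Gamma$ is weakly contained in $\lambda_\Gamma$. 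By Hulanicki's theorem this forces $\Gamma$ to be amenable, contradicting the hypothesis.

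This approach bypasses the Furstenberg-boundary setup recalled immediately before the lemma; I anticipate no serious obstacle, as the only nontrivial input is the classical fact that a non-amenable group admits no character of $\Gamma$ that is weakly contained in its regular representation. I note that the countability and infiniteness hypotheses on $\Gamma$ are not strictly needed for the argument, since non-amenability already excludes the finite case. If the authors prefer a boundary-theoretic argument consistent with the surrounding narrative, one could alternatively exploit the canonical embedding $C^*_r(\Gamma)\hookrightarrow C(\partial_F\Gamma)\rtimes_r\Gamma$ together with the failure of topological freeness of $\Gamma\curvearrowright\partial_F\Gamma$, but the Fell--Hulanicki route is substantially shorter.
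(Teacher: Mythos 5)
Your proof is correct, and it takes a genuinely different route from the paper. You observe that a codimension-one ideal would yield a character $\chi$ of $\Gamma$ weakly contained in $\lambda_\Gamma$, and then run the classical Fell--Hulanicki argument: $\lambda_\Gamma\otimes\bar\chi\simeq\lambda_\Gamma$ by Fell absorption, weak containment passes through tensoring, so $1_\Gamma=\chi\otimes\bar\chi\prec\lambda_\Gamma$ and $\Gamma$ is amenable. This is the standard fact that $C^*_r(\Gamma)$ admits a character if and only if $\Gamma$ is amenable, and every step (the identification of $\chi\prec\lambda_\Gamma$ with the character factoring through $C^*_r(\Gamma)$, Fell absorption for one-dimensional representations, preservation of weak containment under tensor products, Hulanicki) is classical and correctly applied. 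The paper instead argues via boundary theory: it extends the character to a $\Gamma$-equivariant u.c.p.\ map $C(\partial_F\Gamma)\rtimes_r\Gamma\to C(\partial_F\Gamma)$, uses the Kalantar--Kennedy rigidity of $C(\partial_F\Gamma)$ to show the boundary action would have to be trivial, and invokes Furman's theorem that the kernel of $\Gamma\curvearrowright\partial_F\Gamma$ is the amenable radical. Your argument is shorter, more elementary, and needs none of the boundary machinery; the paper's version has the virtue of staying within the toolkit (Furstenberg boundary, equivariant injectivity) used throughout the surrounding sections. You are also right that countability and infiniteness are not needed for this implication.
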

\begin{proof}
Since $\Gamma$ is not C$^*$-simple, there is a non-zero proper two-sided closed ideal $I\subset C^*_r(\Gamma)$. Assume that  $C^*_r(\Gamma)/I=\mathbb{C}$.

Denote by $\pi: C^*_r(\Gamma)\twoheadrightarrow C^*_r(\Gamma)/I=\mathbb{C}$ the *-homomorphism. Note that $\pi(\lambda(s))\overline{\pi(\lambda(s))}=\pi(\lambda(s)\lambda(s^{-1}))=\pi(\lambda(e))=1$. Thus $|\pi(\lambda(s))|^2=1$ for all $s\in \Gamma$.

Denote by $\partial_F\Gamma$ the Furstenberg boundary of $\Gamma$. We may extend the composition map $C^*_r(\Gamma)\overset{\pi}{\twoheadrightarrow}\mathbb{C}\overset{j}{\hookrightarrow}C(\partial_F\Gamma)$ by $\Gamma$-injectivity of $C(\partial_F\Gamma)$ (\cite[Theorem 3.12(1)]{KK}) to a $\Gamma$-equivariant u.c.p. map $\phi: C(\partial_F\Gamma)\rtimes_r\Gamma\rightarrow C(\partial_F\Gamma)$. Since $\phi|_{\mathbb{C}}=\text{Id}$, by rigidity (\cite[Theorem 3.12(2)]{KK}, $\phi|_{C(\partial_F\Gamma)}=\text{Id}$. Note that $\pi=\phi|_{C^*_r(\Gamma)}: C^*_r(\Gamma)\rightarrow \mathbb{C}$.

We now claim that $sx=x$ for all $s\in \Gamma$ and all $x\in \partial_F\Gamma$. If not, there exists some $x\in \partial_F\Gamma$ and some $e\neq s\in \Gamma$ such that $sx\neq x$. Choose $f\in C(\partial_F\Gamma)$ such that $f(x)=1$ and $f(s^{-1}x)=0$. Then the following calculation shows that
\begin{align*}
\pi(\lambda(s))=\phi(\lambda(s))
&=\phi(\lambda(s))(x)~~(\text{since $\phi(C^*_r(\Gamma))=\mathbb{C}$})\\
&=\phi(\lambda(s)f)(x)~~(\text{since $\phi|_{C(\partial_F\Gamma)}$=\text{Id}})\\
&=\phi(s.f\lambda(s))(x)\\
&=\phi(s.f)(x)\phi(\lambda(s))(x)\\
&=(s.f)(x)\phi(\lambda(s))(x)\\
&=f(s^{-1}x)\phi(\lambda(s))(x)\\
&=0.
\end{align*}
Thus we get a contradiction to $|\pi(\lambda(s))|^2=1$ as shown before.

By a well-known theorem of Furman \cite{Furman_amenableradical}, we see that $\Gamma=\text{Ker}(\Gamma\curvearrowright\partial_F\Gamma)=\text{Rad}(\Gamma)$, where $\text{Rad}(\Gamma)$ is the amenable radical of $\Gamma$, an amenable group. In particular, $\Gamma$ is amenable, a contradiction.
\end{proof}
For non-simple groups, it is easy to find non-trivial ideals with $C_r^*(\Gamma)/I\ne\mathbb{C}$. It is perhaps well-known to the experts; we record it for the sake of completion in the form of the following lemma.
\begin{lem}\label{lem: infinite amenable groups have non-codimension 1 representations iff G is non-simple}
Let $\Gamma$ be a countable infinite amenable group. Then $C^*_r(\Gamma)$ has a non-zero proper closed two-sided ideal $I$ such that $C^*_r(\Gamma)/I\neq \mathbb{C}$ if $\Gamma$ is non-simple.
\end{lem}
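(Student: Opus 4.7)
The plan is to produce the ideal as the kernel of a quotient map, using amenability to move between full and reduced group $C^*$-algebras. Since $\Gamma$ is non-simple (and non-trivial because it is infinite), pick a normal subgroup $N$ with $\{e\}\neq N\neq \Gamma$ and consider the non-trivial quotient $\Gamma/N$. The crucial observation is that amenability is inherited by quotients, so both $\Gamma$ and $\Gamma/N$ are amenable, which means that the full and reduced $C^*$-algebras coincide in both cases. This is exactly what will let me push the quotient homomorphism $q:\Gamma\twoheadrightarrow \Gamma/N$ through to a surjective $*$-homomorphism at the reduced level.

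First I would invoke the universal property of the full group $C^*$-algebra to lift $q$ to a surjective $*$-homomorphism $\tilde{q}:C^*(\Gamma)\twoheadrightarrow C^*(\Gamma/N)$, and then use amenability of both groups to reinterpret this as a surjection $\pi:C^*_r(\Gamma)\twoheadrightarrow C^*_r(\Gamma/N)$ satisfying $\pi(\lambda(g))=\lambda(gN)$ on generators. Set $I:=\ker\pi$, so that by the first isomorphism theorem $C^*_r(\Gamma)/I\cong C^*_r(\Gamma/N)$.

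Then the three required properties of $I$ are routine to verify. The ideal $I$ is non-zero since $\lambda(n)-\lambda(e)\in I\setminus\{0\}$ for any $e\neq n\in N$, where non-vanishing uses faithfulness of the canonical trace $\tau_0$ on $C^*_r(\Gamma)$. It is proper since $C^*_r(\Gamma/N)\neq 0$. And $C^*_r(\Gamma)/I\cong C^*_r(\Gamma/N)\neq \mathbb{C}$, because $\Gamma/N$ has at least two elements and hence $\lambda(eN)$ and $\lambda(gN)$ for any $g\notin N$ are distinct, linearly independent unitaries inside $C^*_r(\Gamma/N)$.

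The main (really only) delicate point is the use of amenability at both levels. Without it one would only get a surjection $C^*(\Gamma)\twoheadrightarrow C^*(\Gamma/N)$, which is not immediately useful if one insists on the reduced $C^*$-algebra; so the proof genuinely uses the hypothesis that $\Gamma$ (and hence $\Gamma/N$) is amenable. Everything else is formal.
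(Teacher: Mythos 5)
Your proof is correct and follows essentially the same route as the paper: pass to a non-trivial quotient $\Gamma/N$, lift to a surjection of full group $C^*$-algebras, and use amenability of $\Gamma$ and $\Gamma/N$ to identify full and reduced $C^*$-algebras, taking $I$ to be the kernel of the resulting surjection $C^*_r(\Gamma)\twoheadrightarrow C^*_r(\Gamma/N)$. Your verification of the three properties of $I$ is if anything slightly more detailed than the paper's.
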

\begin{proof}
Since $\Gamma$ is non-simple, there exists a surjective group homomorphism $\phi: \Gamma\twoheadrightarrow \Lambda$, where $\Lambda$ is a non-trivial group. Then, it induces a surjective group homomorphism, still denoted by $\phi: C^*(\Gamma)\twoheadrightarrow C^*(\Lambda)$. Since $\Gamma$ is amenable, we deduce that $\Lambda$ is also amenable and hence we have C$^*$-isomorphisms $C^*(\Gamma)\cong C^*_r(\Gamma)$ and $C^*(\Lambda)\cong C^*_r(\Lambda)$. After composing with these two isomorphisms, we may assume that $\phi$ is a surjective *-homomorphism between reduced group C$^*$-algebras. Therefore $I:=\text{Ker}(\phi)$ is an ideal with the required property since $\Lambda$ is non-trivial.
\end{proof}
It was shown in \cite{kalantar2022invariant} and \cite{chifan2022invariant} that for groups considered there, every invariant $C^*$-subalgebra $\mathcal{A}\le C_r^*(\Gamma)$, which is the image of a conditional expectation, is of the form $C_r^*(N)$ for some normal subgroup $N\triangleleft\Gamma$. However, the ISR-property (that every invariant von Neumann subalgebra $\mathcal{M}\le L(\Gamma)$ is of the form $L(N)$ for some normal subgroup $N\triangleleft\Gamma$) does not imply the $C^*$-ISR property in general. The following example demonstrates that.
\begin{example}[ISR$\notimplies C^*$-ISR]\label{example: ISR does not imply C-ISR}
Recall that the amenable group $S_{\infty}$, the group consisting of all permutations on $\mathbb{N}$ with finite supports, is shown to have the ISR property in \cite{jiang2024example}. However, this group is not simple as it contains a normal subgroup of index two; $A_{\infty}$, the subgroup consisting of even permutations. Hence, $S_{\infty}$ does not have the C$^*$-ISR property by the combination of Lemma~\ref{lem: obstruction to C*-ISR by non-codimension one ideals} along with Lemma~\ref{lem: infinite amenable groups have non-codimension 1 representations iff G is non-simple}. More explicitly, letting $I=\text{Ker}(\pi)$, where $\pi: C^*(S_{\infty})\rightarrow C^*(\mathbb{Z}/2\mathbb{Z})$, $\mathcal{A}_I=I+\mathbb{C}$ is an invariant $C^*$-subalgebra which does not come from a subgroup.
\end{example}
Combining all of these above, we obtain the following dichotomy for groups with $C^*$-ISR property. This is Theorem~\ref{thm:conseISR} from the introduction.
\begin{theorem}
Let $\Gamma$ be a countable infinite group with the C$^*$-ISR property. Then $\Gamma$ is either a simple amenable group or is a C$^*$-simple group. In particular, $\Gamma$ is an i.c.c. group.
\end{theorem}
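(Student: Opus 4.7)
The approach is by contraposition through a two-step case analysis, using in sequence the three preceding lemmas. The obstruction lemma (Lemma~\ref{lem: obstruction to C*-ISR by non-codimension one ideals}) shows that the presence of any nonzero proper closed two-sided ideal $I \subset C_r^*(\Gamma)$ with $C_r^*(\Gamma)/I \neq \mathbb{C}$ is incompatible with the $C^*$-ISR property, while the other two lemmas (Lemma~\ref{lem: non C*-simple groups vefify the existence of non-codimension one ideals} and Lemma~\ref{lem: infinite amenable groups have non-codimension 1 representations iff G is non-simple}) produce such ideals whenever $\Gamma$ is, respectively, non-amenable non-$C^*$-simple or amenable non-simple.

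Concretely, suppose $\Gamma$ has the $C^*$-ISR property but is not $C^*$-simple. If $\Gamma$ were non-amenable, Lemma~\ref{lem: non C*-simple groups vefify the existence of non-codimension one ideals} would produce an ideal witnessing the failure of $C^*$-ISR via Lemma~\ref{lem: obstruction to C*-ISR by non-codimension one ideals}, a contradiction; hence $\Gamma$ must be amenable. If $\Gamma$ were amenable but not simple, Lemma~\ref{lem: infinite amenable groups have non-codimension 1 representations iff G is non-simple} would again supply such an ideal, giving the same contradiction. Thus $\Gamma$ is simple amenable, which establishes the dichotomy.

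To obtain the i.c.c.\ conclusion, I would handle the two outcomes separately. If $\Gamma$ is $C^*$-simple and some $e \neq g \in \Gamma$ has finite conjugacy class $C(g)$, the element $z := \sum_{h \in C(g)} \lambda(h)$ is $\Gamma$-fixed under conjugation, hence central in the simple unital $C^*$-algebra $C_r^*(\Gamma)$, so $z$ is a scalar; applying $\tau_0$ gives $z = 0$, which is impossible since $z \delta_e = \sum_{h \in C(g)} \delta_h \neq 0$ in $\ell^2(\Gamma)$. In the simple amenable case, if some $g \neq e$ had finite conjugacy class, the centralizer $C_\Gamma(g)$ would have finite index, so its normal core $N$ would be a finite-index normal subgroup; simplicity forces $N = \Gamma$ (otherwise $\Gamma$ embeds in a finite symmetric group and is finite), so $g \in Z(\Gamma)$, and simplicity again forces $Z(\Gamma) = \Gamma$, making $\Gamma$ abelian simple and hence cyclic of prime order, contradicting infinitude.

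There is no genuine obstacle: the substantive content is already packaged in the three lemmas above, which encode the boundary-theoretic arguments needed to locate ideals in $C_r^*(\Gamma)$. The only point that requires a little care is the i.c.c.\ conclusion in the simple amenable case, where the $C^*$-algebraic centrality argument is unavailable (since $C_r^*(\Gamma)$ need not be simple), and one must instead fall back on the classical FC-centre / normal-core argument at the group-theoretic level.
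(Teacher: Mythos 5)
Your proof of the dichotomy is exactly the paper's: contraposition through the three lemmas, with Lemma~\ref{lem: non C*-simple groups vefify the existence of non-codimension one ideals} handling the non-amenable non-$C^*$-simple case and Lemma~\ref{lem: infinite amenable groups have non-codimension 1 representations iff G is non-simple} the amenable non-simple case, both feeding into Lemma~\ref{lem: obstruction to C*-ISR by non-codimension one ideals}. The only divergence is in the i.c.c.\ addendum for the $C^*$-simple branch. The paper argues group-theoretically: the FC-centre $\Gamma_{\text{fin}}$ is amenable (finitely generated subgroups are virtually abelian via finite-index centralizers), and $C^*$-simple groups have trivial amenable radical, so $\Gamma_{\text{fin}}=\{e\}$. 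You instead run a direct $C^*$-algebraic argument: for $g\neq e$ with finite conjugacy class $C(g)$, the element $z=\sum_{h\in C(g)}\lambda(h)$ is conjugation-invariant, hence central in the generators and so in $C_r^*(\Gamma)$; simplicity of a unital $C^*$-algebra forces trivial center, so $z=\tau_0(z)1=0$ (as $e\notin C(g)$), contradicting $z\delta_e\neq 0$. This is correct and arguably more self-contained, since it avoids invoking the nontrivial fact that $C^*$-simplicity implies trivial amenable radical, at the mild cost of the standard fact that simple unital $C^*$-algebras have trivial center. Your simple-amenable i.c.c.\ argument (normal core of the finite-index centralizer, then simplicity forcing the group to be abelian and hence finite) is the same as the paper's.
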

\begin{proof}
The first part follows by combining Lemma \ref{lem: obstruction to C*-ISR by non-codimension one ideals}, Lemma \ref{lem: non C*-simple groups vefify the existence of non-codimension one ideals} and Lemma \ref{lem: infinite amenable groups have non-codimension 1 representations iff G is non-simple}.

For the last part, note that the finite radical $\Gamma_{\text{fin}}$, i.e., the normal subgroup consisting of all elements in $\Gamma$ with finite conjugacy classes, are amenable. Indeed, this follows since for any finitely many elements $s_1,\ldots, s_n\subset \Gamma_{\text{fin}}$, we have that $\langle s_1,\ldots, s_n\rangle$ is virtually abelian. To see  this, observe that $[\Gamma: C_\Gamma(\langle s_1,\ldots, s_n)]=[\Gamma: \cap_{i=1}^nC_\Gamma(s_i)]<\infty$ since $[\Gamma: C_\Gamma(s_i)]<\infty$ for each $1\leq i\leq n$. Write $C(\langle s_1,\ldots, s_n\rangle)$ for the center of $\langle s_1,\ldots, s_n\rangle$. Observe that $\langle s_1,\ldots, s_n\rangle/{C(\langle s_1,\ldots, s_n\rangle)}$ embeds into $\Gamma/{C_\Gamma(\langle s_1,\ldots, s_n\rangle)}$ and hence is a finite group.

It is well-known that C$^*$-simple groups have trivial amenable radical, we deduce that $\Gamma_{\text{fin}}$ is trivial, i.e. $\Gamma$ is i.c.c. if $\Gamma$ is a C$^*$-simple group.

If $\Gamma$ is an infinite simple group, then $\Gamma$ is i.c.c. Indeed, let $e\neq s\in \Gamma$ be an element with finite conjugacy class, equivalently, $[\Gamma: C_\Gamma(s)]<\infty$. Then there is a finite index normal subgroup $\Lambda\lhd \Gamma$ such that $\Lambda\subset C_\Gamma(s)$. Since $\Gamma$ is simple, we deduce that either $\Lambda=\{e\}$ or $\Lambda=\Gamma$. If $\Lambda=\{e\}$, then $\Gamma$ is finite, a contradiction. If $\Lambda=\Gamma$, then $\Gamma=C_\Gamma(s)$ and hence $s$ lies in the center of $\Gamma$. Combining this with the simplicity of $\Gamma$, we deduce that $\Gamma$  is abelian. But an infinite abelian group is not simple, a contradiction.
\end{proof}
\begin{remark}
An unital $C^*$-algebra $\mathcal{A}$ is called just-infinite if it is infinite-dimensional,
and for each non-zero closed two-sided ideal $I\triangleleft \mathcal{A}$, the quotient $\mathcal{A}/I$ is finite-dimensional (see \cite[Definition~3.1]{GMR}). To show that an infinite group $\Gamma$ with the $C^*$-ISR property is either $C^*$-simple or amenable, we can also apply \cite[Proposition 6.1]{GMR} directly since Lemma \ref{lem: obstruction to C*-ISR by non-codimension one ideals} shows that $C^*_r(\Gamma)$ is a just-infinite $C^*$-algebra in the sense of \cite{GMR}.
\end{remark}
\begin{remark}
To show an infinite group with C$^*$-ISR property is i.c.c., we can also repeat the argument in \cite[Proposition 3.1]{amrutam2023invariant} with necessary modification by using Lemma \ref{lem: infinite amenable groups have non-codimension 1 representations iff G is non-simple}.
\end{remark}
Let $\text{Ker}(\epsilon)$ denote the augmentation ideal defined as the kernel of the trivial representation $\epsilon: C^*(\Gamma)\rightarrow \mathbb{C}$; equivalently, it is the closed ideal in $C^*(\Gamma)$ generated by $\{\lambda(g)-\lambda(e):~g\in\Gamma\}$.
We now observe whether infinite amenable simple groups satisfy the $C^*$-ISR property. Under the assumption that $\mathbb{C}\Gamma$ has a unique $C^*$-completion, it turns out that $\text{Ker}(\epsilon)$ is the unique non-trivial ideal. We refer the reader to \cite{GMR} for a detailed discussion on the uniqueness of C$^*$-completions. The group algebra $\mathbb{C}\Gamma$ is said to have a unique C$^*$-completion if there is exactly one C$^*$-norm on $\mathbb{C}\Gamma$. This property is intimately connected to the ideal structure of the group C$^*$-algebra. For instance, it was shown in \cite[Proposition~6.7]{GMR} that every locally finite group has the unique C$^*$-completion property. Moreover, the study of just-infinite C$^*$-algebras in \cite{GMR} highlights that for many groups, the ideal structure is rigid (finite codimension ideals), a phenomenon that parallels the consequences of the C$^*$-ISR property we investigate here.
\begin{prop}
Let $\Gamma$ be a countable infinite amenable group. If $\mathbb{C}\Gamma$ has a unique C$^*$-completion and the only non-zero proper closed two-sided ideal in $\mathbb{C}\Gamma$ is $\text{Ker}(\epsilon)\cap \mathbb{C}\Gamma$, i.e., the augmentation ideal in $\mathbb{C}\Gamma$, then the only non-zero proper closed two-sided ideal in $C^*(\Gamma)=C_r^*(\Gamma)$ is $\Phi(\text{Ker}(\epsilon))$.
\end{prop}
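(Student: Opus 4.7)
The plan is to exploit the amenability of $\Gamma$ (which makes $\Phi: C^*(\Gamma)\to C^*_r(\Gamma)$ an isomorphism, so $\Phi(\text{Ker}(\epsilon))$ is a codimension-one closed two-sided ideal in $C^*_r(\Gamma)$) together with the ideal structure of $\mathbb{C}\Gamma$ transferred back through intersection with the dense subalgebra. Let $J$ be any non-zero proper closed two-sided ideal in $C^*_r(\Gamma)$, and form the $\ast$-ideal $I := J\cap \mathbb{C}\Gamma$, which is relatively closed in $\mathbb{C}\Gamma$ (and a $\ast$-ideal since both $J$ and $\mathbb{C}\Gamma$ are $\ast$-closed). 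The goal is to pin down $I = \text{Ker}(\epsilon)\cap \mathbb{C}\Gamma$; once this is done, taking closures in $C^*_r(\Gamma)$ yields $J\supseteq \overline{I} = \Phi(\text{Ker}(\epsilon))$, and the codimension-one/properness comparison forces equality.

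The three key steps are as follows. First, rule out $I = \mathbb{C}\Gamma$: if $\mathbb{C}\Gamma\subseteq J$, then closedness of $J$ and density of $\mathbb{C}\Gamma$ give $J = C^*_r(\Gamma)$, contradicting properness. Second, rule out $I = 0$: if $J\cap \mathbb{C}\Gamma = 0$, then the composition $\mathbb{C}\Gamma\hookrightarrow C^*_r(\Gamma)\twoheadrightarrow C^*_r(\Gamma)/J$ is an injective $\ast$-homomorphism with dense image. Pulling back the quotient norm endows $\mathbb{C}\Gamma$ with a C$^*$-norm, and by the assumed uniqueness of the C$^*$-completion of $\mathbb{C}\Gamma$ this norm must agree with the reduced norm. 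By density this forces the quotient map itself to be isometric, i.e.\ $J = 0$, contradicting $J\neq 0$. Third, apply the hypothesis: the only remaining possibility is $I = \text{Ker}(\epsilon)\cap \mathbb{C}\Gamma$, whence $\overline{I} \subseteq J$.

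To finish, observe that the closure of the algebraic augmentation ideal $\text{Ker}(\epsilon)\cap \mathbb{C}\Gamma$ in $C^*_r(\Gamma)$ is exactly the closed two-sided ideal generated by $\{\lambda(g)-\lambda(e) : g\in \Gamma\}$, which is $\Phi(\text{Ker}(\epsilon))$. Since $C^*_r(\Gamma)/\Phi(\text{Ker}(\epsilon))\cong \mathbb{C}$, this is a codimension-one ideal, and combined with $J$ being proper this forces $J = \Phi(\text{Ker}(\epsilon))$.

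The main obstacle is the second step, where uniqueness of the C$^*$-completion hypothesis is used in an essential way: without it, a closed ideal in $C^*_r(\Gamma)$ might trivially intersect $\mathbb{C}\Gamma$ and escape detection by the algebraic ideal structure. The other two steps are routine density/codimension arguments, but this rigidity input is what bridges the algebraic hypothesis on $\mathbb{C}\Gamma$ to the operator-algebraic conclusion on $C^*_r(\Gamma)$.
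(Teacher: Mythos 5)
Your proof is correct and follows essentially the same route as the paper's: intersect the given ideal $J$ with $\mathbb{C}\Gamma$, show the intersection is non-zero using uniqueness of the C$^*$-completion, apply the hypothesis on the ideal structure of $\mathbb{C}\Gamma$, and finish via density of the algebraic augmentation ideal in $\text{Ker}(\epsilon)$ together with its codimension-one property. The only (harmless) difference is that where the paper invokes the cited characterization of unique C$^*$-completions to conclude $J\cap\mathbb{C}\Gamma\neq(0)$, you unpack that implication directly by pulling back the quotient norm from $C^*_r(\Gamma)/J$ to $\mathbb{C}\Gamma$ and using uniqueness of the C$^*$-norm to force the quotient map to be isometric.
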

\begin{proof}
Assume that $(0)\neq I\subset C^*_r(\Gamma)$ is a closed two-sided ideal, then $\Phi^{-1}(I)\cap \mathbb{C}\Gamma\neq (0)$ by the assumption that $\mathbb{C}\Gamma$ has a unique C$^*$-completion and $I\neq (0)$. Hence $\Phi^{-1}(I)\cap \mathbb{C}\Gamma=\text{Ker}(\epsilon)\cap \mathbb{C}\Gamma$ or $\mathbb{C}\Gamma$. Hence $\text{Ker}(\epsilon)\subseteq \Phi^{-1}(I)$ since the completion of $\text{Ker}(\epsilon)\cap \mathbb{C}\Gamma$ in $C^*_r(\Gamma)$ is $\text{Ker}(\epsilon)$. Since $\text{Ker}(\epsilon)$ has codimension one in $C^*(\Gamma)$, we deduce that $I=\Phi(\text{Ker}(\epsilon))$ or $C^*_r(\Gamma)$.
\end{proof}
We do not know if infinite simple amenable groups satisfy the $C^*$-ISR property. However, under the assumption that they do, we can shed some light on the ideal structure of $C_r^*(\Gamma)$. In particular, we now prove Theorem~\ref{thm:consequenceofISR}.
\begin{theorem}
Let $\Gamma$ be a countable infinite amenable group. Let $\Phi: C^*(\Gamma)\twoheadrightarrow C^*_r(\Gamma)$ be the canonical *-isomorphism such that $\Phi(\lambda(s))=\lambda(s)$ for all $s\in\Gamma$. Assuming that $\Gamma$ has the C$^*$-ISR property, the following statements hold true.
\begin{itemize}
\item[(1)] The only closed two-sided ideals in $C^*_r(\Gamma)$ are $(0)$, $\Phi(\text{Ker}(\epsilon))$, or $C^*_r(\Gamma)$, where
\item[(2)] $\mathbb{C}\Gamma$ has a unique C$^*$-completion.
\end{itemize}
\end{theorem}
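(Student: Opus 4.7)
The plan is to first invoke the dichotomy given by Theorem~\ref{thm:conseISR}: since $\Gamma$ is infinite amenable with the $C^*$-ISR property and amenable groups cannot be $C^*$-simple, we conclude that $\Gamma$ is a simple amenable group. Amenability then yields that $\Phi: C^*(\Gamma) \to C^*_r(\Gamma)$ is a $*$-isomorphism, which will let us translate ideals of one algebra into ideals of the other.

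For part (1), I would take an arbitrary non-zero proper closed two-sided ideal $I \triangleleft C^*_r(\Gamma)$ and form the unital $\Gamma$-invariant $C^*$-subalgebra $\mathcal{A}_I := I + \mathbb{C}$, exactly as in the proof of Lemma~\ref{lem: obstruction to C*-ISR by non-codimension one ideals}. The $C^*$-ISR hypothesis produces a normal subgroup $N \triangleleft \Gamma$ with $\mathcal{A}_I = C^*_r(N)$, and simplicity of $\Gamma$ forces $N = \{e\}$ or $N = \Gamma$. The case $N = \{e\}$ gives $\mathcal{A}_I = \mathbb{C}$, so $I \subseteq \mathbb{C}$; since $I \neq (0)$ this would force $1 \in I$, contradicting properness. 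Therefore $I + \mathbb{C} = C^*_r(\Gamma)$, i.e., $I$ has codimension one, and the quotient map $C^*_r(\Gamma) \twoheadrightarrow C^*_r(\Gamma)/I \cong \mathbb{C}$ is a character which restricts to a group homomorphism $\chi : \Gamma \to \mathbb{T}$. The kernel of $\chi$ is normal, so equals $\{e\}$ or $\Gamma$; the first possibility would embed $\Gamma$ into $\mathbb{T}$, forcing $\Gamma$ abelian, which is impossible for an infinite simple group. Hence $\chi$ is trivial, so $I$ contains $\lambda(g) - \lambda(e)$ for all $g \in \Gamma$ and thus $I \supseteq \Phi(\text{Ker}(\epsilon))$. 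Equality follows because both ideals have codimension one.

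For part (2), I would use the standard correspondence between $C^*$-completions of $\mathbb{C}\Gamma$ and closed two-sided ideals $I \triangleleft C^*(\Gamma)$ satisfying $I \cap \mathbb{C}\Gamma = (0)$, in which a completion identifies with $C^*(\Gamma)/I$. Since $\Gamma$ is amenable we may transport this to $C^*_r(\Gamma)$ via $\Phi$, and by part~(1) the only candidates for such $I$ are $(0)$, $\Phi(\text{Ker}(\epsilon))$, and $C^*_r(\Gamma)$ itself. The latter is excluded as it yields the zero completion, while $\Phi(\text{Ker}(\epsilon))$ is excluded because it meets $\mathbb{C}\Gamma$ in the non-zero augmentation ideal (it contains every $\lambda(g) - \lambda(e)$). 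Hence $I = (0)$ is the only option, and the unique $C^*$-completion of $\mathbb{C}\Gamma$ is $C^*_r(\Gamma)$.

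There is no serious obstacle to this argument; the one point requiring mild care is ruling out a non-trivial character of $\Gamma$, for which the combination of simplicity with $\Gamma$ being infinite (so not of prime order) is exactly what is needed.
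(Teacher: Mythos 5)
Your proof is correct and follows essentially the same route as the paper: both reduce to the simplicity of $\Gamma$, show any non-zero proper ideal has codimension one via the invariant subalgebra $I+\mathbb{C}$, kill the resulting character using the triviality of $\Gamma_{ab}$ (you phrase this via the kernel of $\chi$, the paper via the abelianization, which is the same point), and deduce part (2) from the criterion that a completion corresponds to an ideal of $C^*(\Gamma)$ meeting $\mathbb{C}\Gamma$ trivially. The only cosmetic difference is that you re-derive the codimension-one step directly rather than citing the paper's Lemma~\ref{lem: obstruction to C*-ISR by non-codimension one ideals}.
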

\begin{proof}
(1) Let $I\subset C_r^*(\Gamma)$ be a non-zero closed two-sided proper ideal. Since $\Gamma$ has the C$^*$-ISR property, we deduce from  Lemma \ref{lem: infinite amenable groups have non-codimension 1 representations iff G is non-simple} and Lemma \ref{lem: obstruction to C*-ISR by non-codimension one ideals} that $\Gamma$ is simple and $\Phi^{-1}(I)$ has codimension one in $C^*(\Gamma)$. In other words, $C^*(\Gamma)/{\Phi^{-1}(I)}=\mathbb{C}$. Note that the surjective *-homomorphism $\pi: C^*(\Gamma)\twoheadrightarrow C^*(\Gamma)/{\Phi^{-1}(I)}$ gives rise to a group homomorphism $\pi|_{\Gamma}: \Gamma\rightarrow \mathcal{U}(C^*(\Gamma)/{\Phi^{-1}(I)})=\mathcal{U}(\mathbb{C})=\mathbb{T}$.
Moreover, note that $\pi|_{\Gamma}$ factorizes through $\Gamma_{ab}:= \Gamma/{[\Gamma, \Gamma]}$ since $\mathbb{T}$ is abelian.

Since  $\Gamma$ is simple and infinite, we get that $\Gamma_{ab}=\{e\}$. Hence $\pi|_{\Gamma}(\lambda(t))=1$ for all $t\in \Gamma$. This shows that $\pi$ factorizes through the augmentation ideal $\text{Ker}(\epsilon)$ and thus $\text{Ker}(\epsilon)\subseteq \Phi^{-1}(I)\subset C^*(\Gamma)$. Since both $\text{Ker}(\epsilon)$ and $\Phi^{-1}(I)$ have codimension one in $C^*(\Gamma)$, we deduce that $\text{Ker}(\epsilon)=\Phi^{-1}(I)$, i.e., $I=\Phi(\text{Ker}(\epsilon))$.

(2) By \cite[Lemma 2.2]{AK_pacific}, $\mathbb{C}\Gamma$ has a unique C$^*$-completion iff $\mathbb{C}\Gamma$ satisfies that for any non-zero closed two-sided ideal $J$ in $C^*(\Gamma)$, we have that $J\cap \mathbb{C}\Gamma\neq (0)$. Since it is clear that $\text{Ker}(\epsilon)\cap\mathbb{C}\Gamma\neq (0)$, the proof is complete by part (1). \end{proof}
\begin{remark}
There are examples of infinite simple amenable groups satisfying the two conditions in the above proposition. Indeed, by  \cite[Proposition 6.7]{GMR}, if $\Gamma$ is a locally finite group (i.e., if every finitely generated subgroup is finite), then $\mathbb{C}\Gamma$ has a unique C$^*$-completion. Furthermore,  there are locally finite groups $\Gamma$ such that the only non-zero proper ideal in $\mathbb{C}\Gamma$ are just the augmentation ideals, such as Hall's universal groups, see \cite{BHPS} and \cite{LP}. So, these groups satisfy these two conditions, but we do not know whether these groups satisfy the C$^*$-ISR property.
\end{remark}

\begin{question}
Does the C$^*$-ISR property imply the ISR property?
\end{question}
\begin{question}
Let $\Gamma$ be an infinite group. If $\Gamma$ has the C$^*$-ISR property, then is $\Gamma$  C$^*$-simple? Does the converse hold?
\end{question}
\newpage
\bibliographystyle{amsalpha}
\bibliography{Inv}
\end{document}